\newtheorem{prop}{Proposition}[section]
\newtheorem{assump}[prop]{Assumptions}
\newtheorem{lemma}[prop]{Lemma}
\newtheorem{theo}[prop]{Theorem}
\numberwithin{equation}{section}
\theoremstyle{remark}
\newtheorem{rmq}{Remark}
\newcommand{\e}{\epsilon}
\newcommand{\ve}{\varepsilon}
\newcommand{\la}{\langle}
\newcommand{\ra}{\rangle}
\newcommand{\di}{\displaystyle}
\newcommand{\n}{\nabla}
\newcommand{\C}{\mathbb{C}}
\newcommand{\R}{\mathbb{R}}
\newcommand{\N}{\mathbb{N}}
\newcommand{\Z}{\mathbb{Z}}
\newcommand{\MQ}{\mathbb{Q}}
\newcommand{\MP}{\mathbb{P}}
\title{On the time of existence of solutions of the 
Euler-Korteweg system}
\author{Corentin Audiard
\footnote{Sorbonne Universit\'es, UPMC Univ Paris 06, UMR 7598, Laboratoire Jacques-Louis Lions, 
F-75005, Paris, France }
\footnote{CNRS, UMR 7598, Laboratoire Jacques-Louis Lions, F-75005, Paris, France}
\footnote{Acknowledgement : the author thanks the 
ANR Project NABUCO ANR-17-CE40-0025 for financial support.}
}
\begin{document}
\maketitle

\begin{abstract}
Under a natural stability condition on the pressure, 
it is known that for small irrotational initial data,
the solutions of the Euler-Korteweg system are global in 
time when the space dimension is at least $3$. 
If the initial velocity has a small 
rotational part, we obtain a lower bound on the time 
of existence that depends only on the rotational part.
In the zero vorticity limit we recover the previous global
well-posedness result.
\\
Independently of this analysis, we also provide (in 
a special case) a simple example of solution that blows 
up in finite time.
\end{abstract}

\section{Introduction}
The Cauchy problem for the Euler-Korteweg system reads 
\begin{equation}\label{EK}
\left\{
\begin{array}{ll}
 \partial_t\rho +\text{div}(\rho u)=0,\\
 \partial_tu+u\cdot \n u+\n g(\rho)=\n \bigg(K\Delta \rho
 +\frac{1}{2}K'(\rho)|\n \rho|^2\bigg),\\
 (\rho,u)|_{t=0}=(\rho_0,u_0).
\end{array}
\right.\ (x,t)\in \R^d\times \R^+.
\end{equation}
$g$ is the pressure term, $K$ the capillary coefficient,
a smooth function $\R^{+*}\to \R^{+*}$.
It appears in the litterature in various contexts 
depending on $K$. $K$ constant has been largely 
investigated, see , and corresponds to capillary fluids.
The important case where $K$ is proportional to $1/\rho$ 
corresponds to the so called quantum fluids, the equations are then 
formally equivalent to the 
nonlinear Schr\"odinger equation 
\begin{equation}\label{NLS}
i\partial_t\psi+\Delta \psi=g(|\psi|^2)\psi,
\end{equation}
through the so called Madelung transform 
$\psi=\sqrt{\rho}e^{i\varphi},\ \nabla \varphi=u$. It is 
worth pointing out that even for a smooth solution of 
NLS the map $(\psi\to (\rho,u)$ is not well defined if 
$\psi$ cancels (existence of vortices).\\
The main result on local well-posedness for the general
Euler-Korteweg system is due to 
Benzoni-Danchin-Descombes \cite{Benzoni1}, we shall use 
the following (slightly simpler) version:
\begin{theo}[\cite{Benzoni1}]\label{locWP}
 For $(\rho_0-\alpha,u_0)\in \mathcal{H}^{s+1}(\R^d)$, $s>d/2+1$, with $\mathcal{H}^s:=H^{s+1}\times H^s$,
 there exists a unique solution
 $(\rho,u)\in (\alpha+C_tH^{s+1})\times C_tH^s$
to \eqref{EK}, and it exists on $[0,T]$ if the 
following two conditions are satisfied
\begin{enumerate}
 \item $\inf_{\R^d\times [0,T]} \rho(x,t)>0$,
 \item $\di \int_0^T \|\Delta \rho(s)\|_\infty+ 
 \|\n u(s)\|_\infty ds$.
\end{enumerate}
\end{theo}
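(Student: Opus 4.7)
The plan is to reformulate \eqref{EK} in a form that makes its dispersive structure manifest, then run a standard approximation scheme. Following a Madelung-type substitution, I would introduce $l$ with $l'(\rho) = \sqrt{K(\rho)/\rho}$ and set $w := \n l(\rho)$; a direct computation shows that the third-order term $\n(K\Delta\rho + \frac{1}{2}K'(\rho)|\n\rho|^2)$ in the velocity equation becomes $\n(\sqrt{\rho K}\,\text{div}\,w)$ up to first-order terms, while $w$ itself satisfies an equation of transport type coupled to $u$. Writing $z = u + iw$, one obtains an extended system for $(\rho, z)$ of the form
\begin{equation*}
\partial_t \rho + u\cdot \n \rho + \rho\,\text{div}\,u = 0,\qquad \partial_t z + u\cdot\n z + i\sqrt{\rho K(\rho)}\,\Delta z = R(\rho, z, \n\rho),
\end{equation*}
which is a quasilinear degenerate Schr\"odinger system and is the natural object on which to perform energy estimates.

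For existence, I would regularize by a Friedrichs truncation (projection on frequencies $\leq n$) so that the equations become an ODE in $H^s$; Cauchy-Lipschitz theory then yields local approximate solutions $(\rho^n, z^n)$. A uniform $\mH^{s+1}$ estimate is obtained by applying $\Lambda^{s+1}$ (or a Littlewood-Paley decomposition) to the extended system, pairing against $(\Lambda^{s+1}\rho, \Lambda^{s+1} z)$ with the natural weights $g'(\rho)$ and $1$ that symmetrize the hyperbolic coupling, and using that the principal dispersive operator $i\sqrt{\rho K}\,\Delta$ is skew-adjoint modulo commutators. These commutators are controlled by Kato-Ponce-type tame estimates. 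Compactness then yields a solution $(\rho, u)$ on a time interval depending only on $\|(\rho_0-\alpha, u_0)\|_{\mH^{s+1}}$ and $\inf \rho_0$, while uniqueness follows from an $L^2\times H^{-1}$ estimate on the difference of two solutions.

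For the continuation criterion, the aim is a tame inequality of the form
\begin{equation*}
\frac{d}{dt}\|(\rho-\alpha, u)\|_{\mH^{s+1}}^2 \leq C\big(\|\rho\|_\infty, 1/\!\inf\rho\big)\big(1 + \|\Delta\rho\|_\infty + \|\n u\|_\infty\big)\|(\rho-\alpha, u)\|_{\mH^{s+1}}^2,
\end{equation*}
so that condition (1) keeps the coefficients $\sqrt{\rho K}, 1/\rho$ bounded, condition (2) makes the Gronwall factor finite on $[0,T]$, and hence the $\mH^{s+1}$ norm stays controlled; a standard bootstrap then extends the solution past any $T$ on which (1) and (2) hold. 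The main obstacle is producing this estimate without any loss of derivative on the third-order term, which is precisely what the symmetrization via $z=u+iw$ achieves: once the equations are paralinearized the remaining commutators are of Coifman-Meyer type and can be closed using only the $L^\infty$ norms of $\Delta\rho$ and $\n u$, exactly as the statement requires.
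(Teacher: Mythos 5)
Your global strategy (reformulate via $z=u+iw$ and the extended system, Friedrichs regularization, tame energy estimate, bootstrap) is indeed the Benzoni--Danchin--Descombes scheme that the paper cites and whose key a priori bound it reproduces in Appendix~\ref{proofenergy}. However, there is a genuine gap at the central step.

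You assert that the principal dispersive operator ``$i\sqrt{\rho K}\,\Delta$ is skew-adjoint modulo commutators'' and that the resulting commutators are of Kato--Ponce or Coifman--Meyer type. This is not the case. After applying $\Lambda^{s+1}$ (or, as in the appendix, $\Delta^n$) and pairing in $L^2$, the deviation of $i\,a\Delta$ from skew-adjointness is the first-order operator $-2i\,\n a\cdot\n$; this contributes a term of the schematic form $\int \n a\cdot\mathrm{Im}\big(\overline{\Delta^n z}\,\n \Delta^n z\big)\,dx$, which is \emph{not} a commutator and costs exactly one derivative on $z$. The same happens with $i\,\n z\cdot w$. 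No Kato--Ponce tame estimate (nor paralinearization combined with Coifman--Meyer) closes this loop, because the issue is not the transfer of derivatives between factors but the lack of symmetry of the leading operator with variable coefficient $a(\rho)$. Your ``weights $g'(\rho)$ and $1$'' symmetrize the hyperbolic coupling between $\rho$ and $u$ but leave the dispersive part untouched.

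The ingredient you are missing is the density-dependent \emph{gauge}. In the actual argument one does not estimate $\Delta^n z$ directly but $\varphi_n(\rho)\Delta^n z$ for a suitable function $\varphi_n$ of $\rho$ (here $\varphi_n=a^n\sqrt{\rho}$), together with a second gauge $\phi_n(\rho)$ for the solenoidal part; the choice of $\varphi_n$ and $\phi_n$, combined with an identity of the type of Lemma~\ref{magiecurl} (which exploits that $w=\n l$ is curl-free), produces an \emph{exact algebraic cancellation} of the loss-of-derivative terms arising from the potential and solenoidal projections, see \eqref{lossQ}--\eqref{noloss} and the relation $(\phi_n^2)'=-2na^{2n-1}\rho a'+a^{2n}$. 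A further ``linear drift'' correction, using the mass equation, removes the residual linear-in-$w$ term. Without these two steps the estimate fails. (A small secondary point: the principal part is $i\,\n(a\,\mathrm{div}\,\cdot)$, which is not $ia\Delta$ unless $z$ is a gradient; the dispersion is degenerate and only acts on the potential part, which is precisely why a separate gauge for $\MP$ is needed.)

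Your remarks on existence (Friedrichs truncation and Cauchy--Lipschitz), uniqueness (a low-regularity estimate on differences), and the continuation criterion following from a tame Gronwall inequality involving $\|\Delta\rho\|_\infty$ and $\|\n u\|_\infty$ are consistent in spirit with what the cited reference does, but they rest on the tame estimate that your proposal does not actually obtain.
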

The original proof also shows that the time of existence of 
the solutions is of order at least 
$\ln(1/\|(\rho_0-\alpha,u_0)\|_{H^{s+1}\times H^s})$. This rather 
small lower bound is due to the absence of assumptions on the 
pressure term which can cause exponentially growing instabilities. 
For stable pressure terms, this result was more recently sharpened 
by Benzoni and Chiron \cite{BenChir} who obtained the natural time 
$O(1/\|(\rho_0-\alpha,u_0)\|_{H^{s+1}\times H^s}$.\\
In irrotational settings, the author proved with B.Haspot \cite{AudHasp2} 
that small initial data lead to a global solution under 
a natural stability assumption on $g$.
The main focus of this paper is to describe more 
accurately the time of existence for small data that 
have a non zero rotational part.\\
We denote $\MQ=\Delta^{-1}\nabla\text{div}$ the 
projector on potential vector fields, $\MP=I-\MQ$
the projector on solenoidal vector fields.
In this paper, we prove the following informally stated
theorem (see theorems \ref{thd5} and \ref{thd3} for the precise statements):
\begin{theo}\label{maintheo}
Let $d\geq 3$, $\alpha>0$ a positive constant 
such that $g'(\alpha)>0$. For some 
function spaces $X,Y,Z$, if  
$\|\rho_0-\alpha\|_X,\ \|u_0\|_Y,\ \|\MP u_0\|_Z$ are 
small enough, then there exists $c(d,\alpha)>0$ such that the 
time of existence of the solution to \eqref{EK}
is bounded from below by $c/\|\MP u_0\|_Y$.
\end{theo}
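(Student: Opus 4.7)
The natural strategy is a Helmholtz decomposition $u=\n\phi+w$ with $\n\phi=\MQ u$, $w=\MP u$, combined with a continuity/bootstrap argument that takes advantage of the known global well-posedness in the purely irrotational case to absorb the nonlinear effects of $w$ on a time interval of length of order $1/\|w_0\|_Z$. The choice of function spaces $X,Y,Z$ will be dictated by the two sub-problems described below.

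\emph{Transport of the solenoidal part.} Applying $\MP$ to the momentum equation immediately kills $\n g(\rho)$ and $\n(K\Delta\rho+\tfrac{1}{2}K'(\rho)|\n\rho|^2)$, leaving
\[
\partial_t w+\MP(u\cdot\n u)=0.
\]
Equivalently, $\mathrm{curl}\,u=\mathrm{curl}\,w$ is transported by $u$ (with a stretching term when $d=3$). Standard commutator and product estimates in a Sobolev-type space $Z\hookrightarrow W^{1,\infty}$ then yield
\[
\|w(t)\|_Z\leq \|w_0\|_Z\exp\!\Big(C\int_0^t\|\n u(s)\|_\infty\,ds\Big).
\]
Since $\n u=\n^2\phi+\n w$ and $\|\n w\|_\infty\lesssim\|w\|_Z$, the integral of the second contribution is of order $t\|w_0\|_Z$, which is precisely what forces the time scale $t\sim 1/\|w_0\|_Z$.

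\emph{Irrotational part as a perturbed NLS.} The couple $(\rho,\n\phi)$ satisfies the system analyzed in the irrotational case \cite{AudHasp2}, up to forcing terms of the form $\MQ(w\cdot\n u+u\cdot\n w)$ produced by projecting the convection $u\cdot\n u$ on gradients. Formally, $\psi=\sqrt{\rho}\,e^{i\phi}$ converts the unperturbed part into the NLS \eqref{NLS}, and for $d\geq 3$ the corresponding linearization around $(\alpha,0)$ disperses with integrable decay, which is what allowed \cite{AudHasp2} to close a global energy/scattering estimate for small irrotational data. I would reopen that argument, tracking the $w$-dependent source as a small perturbation under the bootstrap assumption $\|w\|_Z\leq 2\|w_0\|_Z$; the expected output is an a priori bound
\[
\int_0^{T^*}\big(\|\Delta\rho\|_\infty+\|\n^2\phi\|_\infty\big)\,dt\leq C,
\]
uniform in $T^*$ provided $\|\rho_0-\alpha\|_X$ and $\|u_0\|_Y$ are small enough.

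\emph{Closing the bootstrap and main obstacle.} Plugging this back into the transport estimate gives
\[
\|w(t)\|_Z\leq \|w_0\|_Z\exp(C+C\|w_0\|_Z\,t),
\]
which improves the bootstrap assumption as long as $t\leq c/\|w_0\|_Z$ for a small $c=c(d,\alpha)$. Theorem \ref{locWP} then extends the solution past this interval, and letting $\|w_0\|_Z\to 0$ recovers the global existence of \cite{AudHasp2}. The hard step is the perturbation analysis in the second paragraph: the forcing produced by $w$ involves the nonlocal Leray projector $\MQ$, so neither the local energy method nor the Madelung substitution applies directly to $(\rho,\n\phi)$. One has to revisit the normal-form and modified-energy identities driving the dispersive theory of \cite{AudHasp2} and check that the quadratic $w$--$\n\phi$ and $w$--$w$ interactions can be absorbed by $\|w\|_Z\lesssim\|w_0\|_Z$. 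This is also what pins down the spaces: $Z$ must be strong enough to control $\|\n w\|_\infty$ via Sobolev embedding yet compatible with the weighted/high-regularity norms $X,Y$ that propagate the scattering bounds on $(\rho-\alpha,\n\phi)$.
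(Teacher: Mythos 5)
Your overall architecture — Helmholtz split, transport estimate for the solenoidal part, bootstrap pairing the transport bound with the irrotational (dispersive) theory of \cite{AudHasp2} — is the same as the paper's. But there are two genuine gaps in how you propose to make the dispersive side work.

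First, the claimed output of the perturbed irrotational analysis,
$\int_0^{T^*}\big(\|\Delta\rho\|_\infty+\|\n^2\phi\|_\infty\big)\,dt\leq C$ uniform in $T^*$, is not achievable. The coupling terms $\MQ(u\cdot\n\MP u+\MP u\cdot\n\MQ u)$ and $\n(\MP u\cdot w)$ act as a persistent source for the dispersive variable, so even under your bootstrap assumption $\|\MP u\|_Z\leq 2\|\MP u_0\|_Z=2\delta$ the dispersive norm does not decay to zero: the paper's bootstrap (section \ref{secd5}, estimate \eqref{bootdispersion}) is $\|\psi\|_{W^{k,p}}\leq C\delta+C\varepsilon/(1+t)^{d(1/2-1/p)}$, with a floor of order $\delta$. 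Consequently $\int_0^{T^*}\|\n u\|_\infty\,dt\sim\delta T^*+\varepsilon$, not an absolute constant. The end result survives because this is the same order as the $\|\n\MP u\|_\infty$ contribution you already accounted for — but the mechanism you describe (irrotational part ``as good as'' in \cite{AudHasp2}, only $\MP u$ responsible for the finite time scale) is wrong. Both the transport growth and the $\delta$-floor of the dispersive decay limit the time to $c/\delta$. Relatedly, you do not articulate that the high-Sobolev energy $\|(\rho-\alpha,u)\|_{\mathcal{H}^N}$ must itself be propagated via Proposition \ref{totenergy}, whose Gronwall factor contains exactly this $\delta T^*$ term; the paper runs a simultaneous three-norm bootstrap (total $H^N$ energy, dispersive $W^{k,p}$ with decay, transport $W^{k,q}$ plus, for $d\le 4$, weighted $L^2$), not the sequential argument you sketch.

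Second, the Madelung substitution $\psi=\sqrt{\rho}e^{i\phi}$ converts \eqref{EK} to NLS only when $K$ is proportional to $1/\rho$; the theorem covers general capillary coefficients. The paper instead uses the extended formulation: $w=\sqrt{K/\rho}\n\rho$, $\psi=\MQ u+iU^{-1}w$ with $U=\sqrt{-\Delta/(2-\Delta)}$, which yields a quasi-linear Schr\"odinger-type equation $\partial_t\psi-iH\psi=\mathcal{N}_1(\psi,\MP u)$ whose dispersion is governed by $H=\sqrt{-\Delta(2-\Delta)}$, not by $\Delta$. You do eventually say that the Madelung substitution ``does not apply directly'' and that one must ``revisit the normal-form and modified-energy identities'' of \cite{AudHasp2}; but that sentence is where the entire difficulty of $d=3,4$ lives. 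The paper devotes section \ref{secd3} and appendix \ref{spacetime} to it: a normal-form change of unknown (Lemma \ref{formenorm}) to eliminate the badly prepared quadratic term $U^{-1}\n\text{div}((1-a)\MQ u)$, weighted $L^2$ estimates on $xe^{-itH}\Psi$, and a frequency-localized space-time resonance analysis exploiting Lemma \ref{multsing}. A direct $L^{p'}\to L^p$ dispersive estimate closes the quadratic terms only for $d\geq 5$; your statement that for $d\geq 3$ the linearization ``disperses with integrable decay'' conflates the decay rate of the linear flow with the closure of a quadratic bootstrap, and is exactly the distinction that forces the paper to treat $d\geq 5$ and $d=3,4$ separately.
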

Note that in the special case $\MP u_0=0$, we recover the global 
well-posedness result from \cite{AudHasp2}.\\
Before commenting the proof and sharpness of this result,
let us give a bit more background on the well-posedness 
theory of the Euler-Korteweg system.
\paragraph{Weak solutions} In the case of the 
quantum Navier-Stokes equations ($K$ proportional to 
$1/\rho$ and addition of a viscosity term) the 
existence of global weak solutions
has been obtained under various assumptions, an important  
breakthrough was obtained by Bresch et al \cite{BreschDesj}, 
introducing what is now called the Bresh-Desjardins entropy, a key 
a priori estimate to construct global weak solutions by 
compactness methods.\\
The inviscid case is more intricate. As the existence of global strong solutions to \eqref{NLS} with a large range 
of nonlinearities is well-known, Antonelli-Marcati 
\cite{AntMarc2} managed to use the formal equivalence with 
\eqref{EK} to construct global weak solutions, the main difficulty
difficulty being to give a meaning to the Madelung transform in 
the vacuum region where $\rho$ cancels, see also the review 
paper \cite{CDS} for a simpler proof. Relative entropy methods
have since been developed \cite{GiesTza},\cite{BreschGV} that
should eventually lead to the existence of global weak 
solutions for more general capillary coefficients $K$. Noticeably, these methods 
allow solutions with vorticity.
\paragraph{Strong solutions} As we mentioned, 
theorem \ref{locWP} is the first well-posedness
result in very general settings, an important idea due 
to Fr\'ed\'eric Coquel was to use a reformulation of the 
equations as a quasi-linear degenerate Schr\"odinger equation  
for which energy estimates in arbitrary high Sobolev spaces 
can be derived. \\
For quantum hydrodynamics ($K=1/\rho$) in the long wave regime
with irrotational speed,
the time interval of existence was improved by 
B\'ethuel-Danchin-Smets \cite{BDSm} thanks to the use 
of Strichartz estimates. This approach is not tractable to 
the general case of system \eqref{EK}. Note however that 
the second aim in \cite{BDSm} (long wave limit) was recently 
studied in \cite{BenChir} where the authors study 
\eqref{EK} in several long waves regimes and prove convergence 
to more classical equations such as Burgers, KdV or KP. Their 
analysis does not require the solutions to be irrotational.
\\
The analogy with the Schr\"odinger equation was 
pushed further in \cite{AudHasp2}
where the authors prove the existence of global strong 
solutions for small irotational data in dimension at 
least $3$. As a byproduct of the proof, such solutions 
behave asymptotically as solutions of the linearized system near a constant
density and zero speed, i.e. they ``scatter''.
The strategy of proof was reminiscent of ideas
developed by Gustavson, Nakanishi and Tsai \cite{GNT3}
for the Gross-Pitaevskii equation, and more generally 
the method of space time resonance (see Germain-Masmoudi-Shatah
\cite{GMS} for a clear description) which has had prolific
applications for nonlinear dispersive equations.
To some extent the present  paper is a continuation of 
such results for a mixed dispersive-transport system.
\paragraph{Travelling waves} The system \eqref{EK} being of 
dispersive nature, it is expectable that soliton like solutions 
exist, that is solutions that only depend on $x\cdot e-ct$ for 
some direction $e\in\R^d$ and speed $c$. In dimension $1$, the existence of solitons 
(traveling waves with same limits at $\pm \infty$) and kinks 
(different limits at $\pm \infty$) was derived in 
\cite{BDD2} by ODE methods. A stability criterion \`a la 
Grillakis-Shatah-Strauss \cite{GSS} was also exhibited. It is a stability of weak type, 
as it implies that the solution remains close to the soliton in 
a norm that does not give local well-posedness (stability ``until possible blow up''). 
Still in dimension $1$, the author proved the existence of multi-solitons 
type solutions, a first example of 
global solution in small dimension which is not an ODE solution.
Finally, motivated by the scattering result \cite{AudHasp2} in 
dimension larger than $2$, the author also proved in 
\cite{Audiard8} the existence of small amplitude traveling 
waves in dimension $2$, an obstruction to 
scattering.
\paragraph{Blow up} To the best of our knowledge, blow up for the Euler-Korteweg 
system is a completely open problem. The formation of vacuum for NLS equations 
with non zero conditions at infinity is also not clearly understood. We construct in 
appendix \ref{secblowup} a solution to \eqref{EK} (quantum case $K=1/\rho$) that 
blows up in finite time. The construction is very simple, it relies on
the existence of smooth solutions to \eqref{NLS} such that $\psi$ vanishes at some 
time and the reversibility of \eqref{EK}. 
\paragraph{The Euler-Korteweg system with a small 
vorticity} To give some intuition of our approach 
it is useful to introduce the reformulation 
from \cite{Benzoni1} : set $\n l:=w:=\sqrt{K/\rho}\n \rho$, 
then for smooth solution without vacuum \eqref{EK} is equivalent to the extended 
system
\begin{equation}\label{EEK}
\left\{
\begin{array}{lll}
\partial_tl+u\cdot \n l+a\text{div}u&=&0,\\
\partial_tw+\n(u\cdot w)+\n(a\text{div}u)&=&0,\\
\partial_tu+u\cdot \nabla u-w\cdot \n w-\n(a\text{div}w)
+g'w&=&0,
\end{array}\right.
\end{equation}
with $a=\sqrt{\rho K}$ and the second equation is simply the gradient of 
the first one.\\
If $u$ is irrotational, setting $z=u+iw$ we have 
using $\n\text{div}z=\Delta z$
\begin{equation*}
\partial_tz+ia\Delta z=\mathcal{N}(z),
\end{equation*}
despite the fact that $\mathcal{N}$ is a highly nonlinear 
term, the link with the Schr\"odinger equation is clear.
This observation is the starting point of the analysis
in \cite{}. Note that we have abusively neglected $g'(\rho)w$, which is at 
first order a linear term 
and thus must be taken into account for long time 
analysis. \\
If $u$ is not potential, it is natural to write 
$u=\MP u+\MQ u$ and split the potential and the solenoidal 
part of the last equation. For $v$ potential
$\di v\cdot\n v=\frac{1}{2}\n |v|^2$, so the last 
two equations of \eqref{EEK} rewrite
\begin{equation}\label{EEKvort}
\left\{
\begin{array}{lll}
 \partial_tw+\n(u\cdot w)+\n(a\text{div}\MQ u)&=&0,\\
 \partial_t\MQ u+\MQ(u\cdot \n\MP u+\MP u\cdot \n \MQ u
 )+\frac{1}{2}\n(|\MQ u|^2-|w|^2)-\n (a\text{div}w)+g'w&=&0,\\
 \partial_t\MP u+\MP(u\cdot\n \MP u+\MP u\cdot \n \MQ u)
 &=&0.
\end{array}
\right.
\end{equation}
The only important point is that the first two equations 
are still the same quasilinear Schr\"odinger equation
(where the Schr\"odinger evolution causes some decay), 
coupled to $\MP u$ and the evolution equation on 
$\MP u$ has $\MP u$ in factor of all its nonlinear 
terms.\\
A very simplified version of this 
dynamical system is the following ODE system
\begin{equation}\label{ODE}
\left\{
\begin{array}{ll}
 x'=-x+x^2+y^2,\\
 y'=y(x+y),
\end{array}
\right.
\end{equation}
where one should think of $x$ as $\MQ u+iw$, $y$ 
as $\MP u$ and the linear evolution $x'=-x$ gives decay.
The proof of the following elementary property is 
the guideline of this paper :
\begin{prop}\label{guide}
Assume $|x(0)|\leq \varepsilon,\ |y(0)|\leq \delta\leq 
\varepsilon$.
Then for $\varepsilon,\delta$ small enough there 
exists $c>0$ such that the solution of 
\eqref{ODE} exists on a time interval $[0,T]$ with 
$T\geq c/\delta$.
\end{prop}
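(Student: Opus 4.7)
The proof is a continuity (bootstrap) argument exploiting the two distinct time scales of the system: $x$ is damped at rate one and only sees a quadratic forcing $x^2+y^2$, while $y$ evolves through a purely quadratic right-hand side $y(x+y)$ and therefore varies slowly. The expected behaviour, which I will confirm, is $|y(t)|\lesssim\delta$ and $|x(t)|\lesssim\varepsilon e^{-t}+\delta^2$ on the full interval of existence, and this forces the lifespan to be at least of order $1/\delta$.

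On the maximal interval $[0,T^*]$ on which both inequalities below hold I introduce the bootstrap hypotheses
\[(\mathrm{A}_x):\ |x(t)|\le 2\varepsilon e^{-t}+C\delta^2,\qquad (\mathrm{A}_y):\ |y(t)|\le 2\delta,\]
for a fixed constant $C$ (a concrete choice such as $C=8$ is enough). To improve $(\mathrm{A}_x)$ I apply Duhamel's formula for the damped scalar equation,
\[x(t)=x(0)e^{-t}+\int_0^t e^{-(t-s)}\bigl(x(s)^2+y(s)^2\bigr)\,ds,\]
insert the pointwise bounds and evaluate the elementary integrals $\int_0^t e^{-(t-s)}e^{-2s}\,ds\le e^{-t}$ and $\int_0^t e^{-(t-s)}\,ds\le 1$. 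The result has the shape $(1+O(\varepsilon))\varepsilon e^{-t}+O(\delta^2)$, which strictly improves $(\mathrm{A}_x)$ once $\varepsilon,\delta$ are small enough.

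To improve $(\mathrm{A}_y)$ I dispose of the trivial case $y\equiv 0$ and use that away from $\{y=0\}$ one has $(\log|y|)'=y'/y=x+y$, whence
\[|y(t)|\le\delta\exp\!\left(\int_0^t\bigl(|x(s)|+|y(s)|\bigr)\,ds\right)\le\delta\exp\bigl(2\varepsilon+(C\delta^2+2\delta)t\bigr).\]
For $t\le c/\delta$ the exponent is bounded by $2\varepsilon+Cc\delta+2c$, which for $c,\varepsilon,\delta$ small enough stays strictly below $\log 2$; hence $|y(t)|<2\delta$. Both hypotheses are therefore strictly improved on $[0,\min(T^*,c/\delta)]$, which by continuity forces $T^*\ge c/\delta$.

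The main obstacle, and the reason for the refined ansatz on $x$, is that the naive bound $|x|\le 2\varepsilon$ would give $\int_0^T|x|\,ds\sim\varepsilon T$ and hence close the $y$-bootstrap only up to time $O(1/\varepsilon)$. Splitting $x$ into a fast transient of size $\varepsilon$ (integrable in time) and an asymptotic remainder of size $\delta^2$ forced by $y^2$ is precisely what produces a lifespan of order $1/\delta$, independent of $\varepsilon$; the same two-scale structure should guide the PDE analysis of \eqref{EEKvort}.
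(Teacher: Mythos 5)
Your proof is correct and follows essentially the same bootstrap strategy as the paper: a continuity argument with an ansatz separating the fast-decaying transient $2\varepsilon e^{-t}$ in $x$ from a small persistent remainder, combined with a Gr\"onwall-type bound on $y$. The only cosmetic differences are that you use the sharper tail $C\delta^2$ in the ansatz for $x$ where the paper uses $\delta$ (both close the bootstrap, since only a bound of the form $O(\delta)$ is needed to make $\int_0^{c/\delta}|x|\,ds$ small), and you control $y$ through $(\log|y|)'=x+y$ rather than integrating $|y'|\le|y|(|x|+|y|)$ directly.
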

\begin{proof}
We plug the ansatz 
\begin{equation}\label{ansatz}
|x(t)|\leq \delta+2\varepsilon e^{-t},
|y(t)|\leq 2\delta 
\end{equation}
in \eqref{ODE}: 
\begin{eqnarray*}
 |x(t)|&\leq& \varepsilon e^{-t}+\int_0^te^{s-t}
 \big(2\delta^2+8\varepsilon^2e^{-2s}+4\delta^2\big)ds
 \leq (\varepsilon+8\varepsilon^2)e^{-t}+6\delta^2,\\
|y(t)|&\leq& \delta+\int_0^t2\delta(\delta
+2\varepsilon e^{-s} +2\delta)ds\leq 
\delta(1+6\delta t+4\varepsilon )
\end{eqnarray*}
For $\varepsilon,\delta\leq 1/16$, $t\leq 1/(12\delta)$
we get 
\begin{equation*}
|x(t)|\leq \frac{3}{2}\varepsilon e^{-t}+\frac{3}{8}
\delta,\ |y(t)|\leq \frac{7}{4}\delta,
\end{equation*}
so that a standard continuation argument ensures that 
the solution exists on $[0,1/(12\delta)]$ and 
\eqref{ansatz} is true on this interval.
\end{proof}
Of course, some difficulties arise in our case: first due to the quasi-linear 
nature of the problem, loss of derivatives are bound to arise. This is handled 
by a method well-understood since the work of Klainerman-Ponce \cite{KlaiPonce}, 
where one mixes dispersive (decay) estimates with high order energy estimates
(see for example the introduction of \cite{AudHasp2} for a short description).\\
The second difficulty is more consequent and is due to some lack of integrability 
of the decay. Basically, we have $\|e^{it\Delta}\|_{L^p\to L^{p'}}
\lesssim 1/t^{d(1/2-1/p)}$, which is weaker as the dimension decreases. Again, 
it was identified in \cite{KlaiPonce} that this is not an issue for quasi-linear 
Schr\"odinger equations if $d\geq 5$, but the case $d<5$ requires much more 
intricate (and recent) methods.
\\
There has been an extremely abundant activity on 
global well-posedness for quasi-linear dispersive 
equations over  the last decade.  The method of space-time resonances 
initiated by Germain-Masmoudi-Shatah\cite{GMS} and 
Gustavson-Nakanishi-Tsai \cite{GNT3} led to numerous 
improvements and outstanding papers, a recent prominent 
result being the global well-posedness of the capillary-gravity
water waves in dimension $3$ due to Deng-Ionescu-Pausader-Pusateri
\cite{IonescuWWCG}.\\
The issue of long time existence for coupled 
dispersive-transport equations is more scarce. 
Nevertheless it arises naturally in numerous physical 
problem, and has been treated at least in the case 
of the Euler-Maxwell  system \cite{IonescuVort}.
The strategy of proof in this references seems to be 
close to proposition \ref{guide}, despite the 
considerable technical difficulties that are bound to 
arise.\\
It is worth pointing out that the time of existence is 
quite natural : it is related to the time of existence 
for $y'=y^2$, which is $1/y(0)$. It should be understood
that the finite time of existence is due to the 
lack of control of the transport equation.

\paragraph{Organization of the article}
We define our notations, functional framework and 
recall some technical tools in section \ref{secnot}.
Section \ref{secenergy} is devoted to some energy 
estimates for \eqref{EK}. The main $H^n$ energy estimate is a 
modification of the arguments in \cite{Benzoni1} and is proved for 
completeness in the appendix \ref{proofenergy}. 
As is common for dispersive equations, the proof of theorem  
\ref{maintheo} is more difficult in smaller dimensions. 
Here $d\geq 5$ is quite straightforward and is treated in 
section \ref{secd5} while $d=3$ is in section \ref{secd3}. $d=4$ is similar 
to $d=3$ but simpler,thus we do not detail this case. A large part of the analysis 
in dimension $3$ builds upon previous results from \cite{AudHasp2}, as such this 
part is not self-contained. The new difficulties are detailed, but 
the delicate estimates for the so-called ``purely dispersive'' quadratic nonlinearities is a bit 
redundant with \cite{AudHasp2} and are thus only partially 
carried out in the appendix \ref{spacetime}.
We also construct in section \ref{secblowup} an example of solution 
which blows up in finite time. This construction relies 
on the Madelung transform and the finite time formation
of vacuum for the Gross-Pitaevskii equation.

\section{Notations and functional spaces}
\label{secnot}
\paragraph{Constants and inequalities}
We will denote by $C$ a constant used in the 
bootstrap argument of sections \ref{secd5} and 
\ref{secd3}, it remains the same in the section.
Constants that are allowed to change from line 
to line are rather denoted $C_1,C_2\cdots$\\
We denote $a\lesssim b$ when there exists $C_1$ 
such that $a\leq C_1b$, with $C_1$ a ``constant'' 
that depends in a clear way on the various 
parameters of the problem.
\paragraph{Functional spaces}
$L^p(\R^d),W^{k,p}(\R^d), W^{k,2}=H^k(\R^d)$, $\dot{W}^{k,p}$ are the 
usual Lebesgue, Sobolev  and homogeneous Sobolev spaces. $L^{p,q}$ is the Lorentz space
obtained as an interpolation space of 
$L^{p_1},L^{p_2}$ by real interpolation with parameter
$q$, see \cite{BergLof}.\\
$\mathcal{S}(\R^d)$ is the Schwartz class, $\mathcal{S'}(\R^d)$ its dual, the 
space of tempered distribution.\\
If there is no ambiguity we drop the 
$(\R^d)$ reference. In our settings, $\rho$ is one derivative more regular than $u$, therefore we define 
$$\mathcal{H}^n=H^{n+1}\times H^n,\ 
\mathcal{W}^{k,p}=W^{k+1,p}\times W^{k,p}.$$
We recall the Sobolev embeddings
\begin{equation*}
\forall, kp<d,\ \dot{W}^{k,p}\hookrightarrow L^q,\ \frac{1}{q}=\frac{1}{p}-
\frac{k}{d},\ \forall\, kp>d,\ W^{k,p}\hookrightarrow C^0\cap L^\infty,
\end{equation*}
the tame product estimate for $p,q,r>1$, $\frac{1}{p}+\frac{1}{q}=\frac{1}{r}$
\begin{equation}
\label{productrule}
\|uv\|_{W^{k,r}}\lesssim \|u\|_{L^p}\|v\|_{W^{k,q}}+\|u\|_{W^{k,q}}\|v\|_{L^p},
\end{equation}
and the composition rule, for $F$ smooth, $F(0)=0$,
\begin{equation}
\forall\, u\in W^{k,p}\cap L^\infty,
\|F(u)\|_{W^{k,p}}\leq C(\|u\|_\infty)\|u\|_{W^{k,p}}\label{comprule}
\end{equation}

\paragraph{Fourier and bilinear Fourier multiplier}
The Fourier transform of $f\in \mathcal{S}'(\R^d)$ is denoted $\widehat{f}$
or $\mathcal{F}(f)$. A Fourier multiplier of symbol $m(\xi)$ with moderate 
growth acts on $\mathcal{S}$
\begin{equation*}
 m(D)f=\mathcal{F}^{-1}(m(\xi)\widehat{f}(\xi)),
\end{equation*}
with natural extensions for matrix valued symbols.
A multiplier denoted $m(-\Delta)$ is of course 
the multiplier of symbol $m(|\xi|^2)$.
\\
The Mihlin-H\"ormander theorem (see \cite{BergLof}) states that for $M$ large enough, if for any 
multi-index $\alpha$ with
$|\alpha|\leq M,\ |\nabla^km|\lesssim |\xi|^{-|\alpha|}$, then $m$
acts continuously on $L^p$, $1<p<\infty$.\\
A bilinear Fourier multiplier of symbol $B(\eta,\xi-\eta)$ acts on 
$\mathcal{S}^2$
\begin{equation*}
B[f,g]=\mathcal{F}^{-1}\bigg(\int_{\R^d}B(\eta,\xi-\eta)\widehat{f}(\eta)
\widehat{g}(\xi-\eta)d\eta\bigg)
=\mathcal{F}^{-1}\bigg(\int_{\R^d}B(\zeta-\eta,\zeta)\widehat{f}(\zeta-\eta)
\widehat{g}(\zeta)d\eta\bigg).
\end{equation*}
The Coifman-Meyer \cite{CoifmanMeyer2} theorem states that if $|\n^{k}B|\lesssim 1/(|\xi|+|\eta|)^k$
for sufficiently many $k$, then $B$ is continuous $L^p\times L^q\to L^r$, 
$1/p+1+q=1/r$, $p>1,\ q,r\leq \infty$. \\
We denote $\nabla_\xi B$ the bilinear multiplier of 
symbol $\n_\xi B(\eta,\xi-\eta)$, and similarly for 
$\n_\eta B$.

\paragraph{Potential and solenoidal fields}
Potential fields $v$ are vector fields of the form 
$v=\nabla f,\ f:\ \R^d\to \C$, they satisfy 
\begin{equation*}
\text{curl}(v)=(\partial_iv_j-\partial_jv_i)
_{1\leq i,j\leq d}=0.
\end{equation*}
Solenoidal fields satisfy $\text{div}(v)=\sum\partial_i 
v_i=0$.\\
The projector on potential vector fields is the Fourier 
multiplier $\mathbb{Q}=\Delta^{-1}\n \text{div}$, the 
projector on solenoidal vector fields is 
$\mathbb{P}=I_n-\mathbb{Q}$.
According to Mihlin-H\"ormander multiplier theorem, 
$\mathbb{P}=(-\Delta)^{-1}\n \text{div}$ and $\mathbb{Q}$ act continuously 
$L^p\to L^p$, $1<p<\infty$, and in the related Sobolev 
spaces.

\paragraph{Reformulation of the equations}
We denote $r=\rho-\alpha$, $r_0=\rho_0-\alpha$, $w:=\n l:=\sqrt{K/\rho}\n \rho$.
According to \cite{Benzoni1},  if $(r_0,u_0)\in \mathcal{H}^N$ with 
$N>d/2+1$, there exists a unique local 
solution to \eqref{EK} such that $(\rho-\alpha,u)\in C_t\mathcal{H}^N$. 
For $N$ large enough the solution is smooth so it is equivalent to 
work on the extended formulation \eqref{EEK}. 
\begin{assump}\label{assumptions}
Up to a change of variables, we can assume
\begin{enumerate}
 \item $\alpha=1$, 
\item $a(1)=1$, 
\item $g'(1)=2>0$.
\end{enumerate}
\end{assump}
Equations \eqref{EEKvort} read
\begin{equation}\label{EEKvort2}
\left\{
\begin{array}{lll}
 \partial_tw+\Delta \MQ u&=&
 \n \big((1-a)\text{div}\MQ u-u\cdot w\big),\\
 \partial_t\MQ u+\mathcal{N}(\MQ u,\MP u,w)
 +(-\Delta+2) w&=&
 \n \big((a-1)\text{div}w\big)+(2-g')w,\\
 \partial_t\MP u+\MP(u\cdot\n \MP u+\MP u\cdot \n \MQ u)
 &=&0.
\end{array}
\right.
\end{equation}
with 
$\mathcal{N}(\MQ u,\MP u,w)=\MQ(u\cdot \n\MP u
+\MP u\cdot \n \MQ u)+\frac{1}{2}\n(|\MQ u|^2-|w|^2)$.\\
Set $\di U=\sqrt{\frac{-\Delta}{2-\Delta}}$, $H=\sqrt{-\Delta(2-\Delta)}$,
then $\psi:=\MQ u+iU^{-1}w$ satisfies 
\begin{equation}\label{EEKschrod}
\left\{
\begin{array}{lll}
\partial_t\psi-iH\psi&=&\mathcal{N}_1(\psi,\MP u),
\\
\partial_t\MP u+\MP(u\cdot\n \MP u+\MP u\cdot \n \MQ u)
 &=&0.
\end{array}
\right.
\end{equation}
with 
\begin{equation}\label{nonlinearity}
\mathcal{N}_1= \n ((a-1)\text{div}w)+(2-g')w
+iU^{-1}\n \big((1-a)\text{div}\MQ u-u\cdot w\big)-\mathcal{N}.
\end{equation}
Note that $U^{-1}$ is singular, but we have for $1<p<\infty$
\begin{equation*}
\|U^{-1}w\|_{L^p}\sim \|\rho\|_{W^{1,p}},
\end{equation*}
therefore using the composition rule \eqref{comprule}, at least when 
$\|(\rho-1,u)\|_{W^{k,p}}<<1$ and $k$ is large enough
\begin{equation}
\|\psi\|_{W^{k,p}}\sim \|(\rho-1,u)\|_{\mathcal{W}^{k,p}}.
\end{equation}

\paragraph{Dispersive estimates}
Dispersion estimates for the semi-group $e^{itH}$ 
were obtained by Gustafson, Nakanishi and Tsai in 
\cite{GNT1}, a version in Lorentz spaces follows 
from real interpolation as pointed out 
in \cite{GNT3}.
\begin{theo}[\cite{GNT1}\cite{GNT3}]\label{dispersion}
For $2\leq p\leq \infty$, $s\in \R^+$, $U=\sqrt{-\Delta/(2-\Delta)}$, we have 
\begin{equation}\label{dispSobo}
\|e^{itH}\varphi\|_{W^{s,p}}\lesssim \frac{\|U^{(d-2)(1/2-1/p)}\varphi\|
_{W^{s,p'}}}{t^{d(1/2-1/p)}},
\end{equation}
and for $2\leq p<\infty$ 
\begin{equation}
\|e^{itH}\varphi\|_{L^{p,2}}\lesssim \frac{\|U^{(d-2)(1/2-1/p)}\varphi\|
_{L^{p',2}}}{t^{d(1/2-1/p)}} .
\end{equation}
\end{theo}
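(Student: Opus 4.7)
The symbol of $H$ is $\omega(\xi)=|\xi|\sqrt{2+|\xi|^2}$, which behaves like $\sqrt{2}\,|\xi|$ at low frequencies (wave-like) and like $|\xi|^2$ at high frequencies (Schr\"odinger-like). The presence of $U^{(d-2)(1/2-1/p)}$ with $U\sim|\xi|$ near $0$ and $U\sim 1$ near infinity is exactly the ``derivative loss'' needed to bridge between the wave-type and Schr\"odinger-type dispersion. My plan is to prove the estimate by a frequency-localized stationary phase analysis, then lift to Sobolev and Lorentz norms by soft arguments.

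First I would reduce to $s=0$: since $e^{itH}$ is a Fourier multiplier, it commutes with $\langle D\rangle^s$, so the $W^{s,p}\!\to\! W^{s,p'}$ estimate follows from the $L^p\!\to\! L^{p'}$ one. By $TT^*$ and interpolation with the trivial $L^2\to L^2$ bound, it suffices to establish the endpoint
\[
\|e^{itH}\varphi\|_{L^\infty}\lesssim t^{-d/2}\,\|U^{(d-2)/2}\varphi\|_{L^1}.
\]
Introduce a Littlewood--Paley decomposition $\varphi=\sum_{k\in\Z}P_k\varphi$ where $P_k$ localizes at $|\xi|\sim 2^k$, and study the kernel
\[
K_k(x,t)=\int_{\R^d}e^{i(x\cdot\xi+t\omega(\xi))}\,\chi_k(\xi)\,d\xi.
\]
A radial function has radial Hessian eigenvalue $f''(|\xi|)$ and tangential eigenvalue $f'(|\xi|)/|\xi|$ with multiplicity $d-1$, so for $f(r)=r\sqrt{2+r^2}$ one finds $|\det\mathrm{Hess}\,\omega|\sim |\xi|^{d-2}/(2+|\xi|^2)^{(d-2)/2}\cdot(\text{bounded})$, i.e.\ $|\det\mathrm{Hess}\,\omega|^{-1/2}\sim U(\xi)^{-(d-2)/2}$. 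Standard (non-degenerate) stationary phase then yields $\|K_k(\cdot,t)\|_\infty \lesssim t^{-d/2}\,2^{k(d-2)/2}(2+2^{2k})^{-(d-2)/4}$, summable dyadically after pairing with the corresponding weight $U^{(d-2)/2}$.

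The main technical issue is carrying out the stationary phase estimate uniformly in the frequency scale: for high $k$, the symbol is essentially that of $-\Delta$ and the classical Schr\"odinger estimate applies; for low $k\ll 0$ one has to rescale $\xi=2^k\eta$, which converts the integral to a semiclassical oscillatory integral with phase $2^k|\eta|\sqrt{2+2^{2k}|\eta|^2}$ and a non-degenerate tangential Hessian once the radial integration is treated separately (this is where the wave-type factor $2^{k(d-2)/2}$ appears, exactly matching $U^{(d-2)/2}$). The transition regime $|\xi|\sim 1$ is harmless as $\omega$ is smooth and strictly convex there. Once the $L^1\to L^\infty$ bound is proved, Riesz--Thorin interpolation with $L^2\to L^2$ gives \eqref{dispSobo} for all $2\le p\le\infty$, and real interpolation of the same endpoints in the Lorentz scale $L^{p,q}$ (using that $U^{(d-2)(1/2-1/p)}$ depends linearly on the interpolation parameter) gives the Lorentz version for $2\le p<\infty$ with $q=2$. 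The hardest step is really a careful stationary phase with derivative-and-scale-dependent bounds on the amplitude; this is exactly what is carried out in \cite{GNT1,GNT3}.
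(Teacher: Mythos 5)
The paper does not prove Theorem~\ref{dispersion}; it is quoted from \cite{GNT1} (with the Lorentz refinement from \cite{GNT3}), and the remark immediately following it notes that the estimates in \cite{GNT1} are in fact stated in Besov norms $B^s_{p,2}$ and transferred to $W^{s,p}$ by the elementary embeddings. Your sketch does track the method of \cite{GNT1}: dyadic stationary phase, with the weight $U^{(d-2)/2}$ coming from the degenerate radial curvature of $H(\xi)=|\xi|\sqrt{2+|\xi|^2}$ at low frequency, followed by interpolation.

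Two points need more care. First, a sign slip: with $H(\xi)=f(|\xi|)$, near $r=0$ one has $f''(r)\sim r$ and $f'(r)/r\sim 1/r$, so $|\det\mathrm{Hess}\,H|=f''\,(f'/r)^{d-1}\sim r^{2-d}\sim U^{-(d-2)}$, not $U^{d-2}$ as you wrote; equivalently $|\det\mathrm{Hess}|^{-1/2}\sim U^{(d-2)/2}$. Your displayed kernel bound $\|K_k(\cdot,t)\|_\infty\lesssim t^{-d/2}\,2^{k(d-2)/2}(2+2^{2k})^{-(d-2)/4}$ is nonetheless correct, so this is an internal inconsistency rather than a flaw in the conclusion. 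Second, and more substantively, ``summable dyadically'' hides the actual subtlety: the frequency-localized kernel bound and its interpolants yield a Besov-type control $\sum_k U(2^k)^{(d-2)(1/2-1/p)}\|P_k\varphi\|_{L^{p'}}$, which is not the $L^{p'}$ norm of $U^{(d-2)(1/2-1/p)}\varphi$. One recovers \eqref{dispSobo} for $2\le p\le\infty$ only via $B^s_{p,2}\hookrightarrow W^{s,p}$ and $W^{s,p'}\hookrightarrow B^s_{p',2}$, which is precisely what Remark~2.3 flags; this embedding step should not be elided. Relatedly, because the weight exponent $(d-2)(1/2-1/p)$ depends on $p$, plain Riesz--Thorin between your $L^1\to L^\infty$ and $L^2\to L^2$ endpoints does not apply verbatim: either interpolate each dyadic piece separately (where $U$ is essentially constant) and resum in the Besov scale, or use Stein's analytic interpolation for the family $U^{z(d-2)/2}e^{itH}$. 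With these adjustments your outline is correct and is essentially the argument of \cite{GNT1,GNT3}.
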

\begin{rmq}
 The estimates from \cite{GNT1} actually involve Besov spaces $B^s_{p,2}$ instead
 of $W^{s,p}$, and are slightly better than \eqref{dispSobo} due to the 
 embedding $B^s_{p,2}\subset W^{s,p}$, 
 $B^s_{p',2}\supset W^{s,p'}$ (see \cite{BergLof} chapter $6$).
\end{rmq}

\section{Energy estimates}
\label{secenergy}
\paragraph{High total energy estimate} 
The following energy estimate bounds all components 
of the solution $(\rho,u)$.
\begin{prop}\label{totenergy}
We recall the notation $r=\rho-1$. 
For $(r_0,u_0)\in \mathcal{H}^N$, $N$ 
large enough, $\|r\|_{W^{2,\infty}}$ small enough, 
\begin{equation*}
 \|(r,u)(t)\|_{\mathcal{H}^N}\leq
 C
 \|(r_0,u_0)\|_{\mathcal{H}^N}  
\text{exp}\big(\int_0^t 
C\|(r,u)\|_{\mathcal{W}^{1,\infty}}ds\big),
\end{equation*}
with $C=C(\|(r,u)\|_{L^\infty\mathcal{H}^N})$ a locally bounded function.
\end{prop}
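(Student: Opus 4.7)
The plan is to work with the extended formulation \eqref{EEK} (equivalent to \eqref{EK} for smooth solutions without vacuum) and, following Benzoni-Danchin-Descombes, construct a modified high-order energy that captures both the symmetric hyperbolic part $(u,l)$ and the dispersive part $w$ while absorbing the variable coefficient $a(\rho)$. More precisely, for a multi-index $\alpha$ with $|\alpha|\leq N$, I apply $\partial^\alpha$ to the three equations of \eqref{EEK} and look for an energy of the form
\begin{equation*}
E_N = \sum_{|\alpha|\leq N}\int_{\R^d} a\,|\partial^\alpha u|^2 + a\,|\partial^\alpha w|^2 + g'(\rho)|\partial^\alpha l|^2\, dx,
\end{equation*}
which, under the smallness of $\|r\|_{W^{2,\infty}}$ and assumption \ref{assumptions}, is equivalent to $\|(r,u)\|_{\mathcal{H}^N}^2$ (here $l$ is one derivative smoother than $w$, matching the regularity balance in $\mathcal{H}^N$).

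Next I differentiate $E_N$ in time using \eqref{EEK}. The transport terms $u\cdot\nabla$ in each equation, once symmetrized with the $a$-weight and the Leibniz rule is used on $\partial_t a = -u\cdot\nabla a - \text{div}(u)a + \ldots$, produce a term $\int (\nabla u)|\partial^\alpha(\cdot)|^2$, which is bounded by $\|\nabla u\|_\infty E_N$. The key cancellation is between the $\nabla(a\,\text{div}\,u)$ term in the $w$ equation and the $-\nabla(a\,\text{div}\,w)$ term in the $u$ equation: after symmetrization and integration by parts, the leading dispersive contributions cancel, leaving only commutator terms $[\partial^\alpha, a]\nabla\text{div}(\cdot)$ which are controlled via the Kato-Ponce / tame product estimate \eqref{productrule} by $\|\nabla a\|_\infty\|(r,u)\|_{\mathcal{H}^N}^2$ plus lower order. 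Similarly $[\partial^\alpha, u\cdot\nabla]$, $[\partial^\alpha, w\cdot\nabla]$ and the commutator with $g'(\rho)$ are absorbed in $\|(r,u)\|_{\mathcal{W}^{1,\infty}} E_N$, using \eqref{comprule} for the nonlinear coefficients $a(\rho)$ and $g'(\rho)$ once the $L^\infty$ smallness on $r$ is invoked.

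The main obstacle is the apparent loss of one derivative in the term $\partial^\alpha\nabla(a\,\text{div}\,w)$ appearing in the $u$-equation: naively this would require control of $\partial^{N+2} w$, i.e.~of $\partial^{N+3}\rho$, which is out of reach. Resolving this is exactly why one must work on the extended system \eqref{EEK} with paired variables $(u,w)$ and weight by $a$: after symmetrization, the pairing $\langle a\partial^\alpha w, -\nabla(a\,\text{div}\,\partial^\alpha w)\rangle + \langle a\partial^\alpha u, \nabla(a\,\text{div}\,\partial^\alpha u)\rangle$ between the $w$- and $u$-equations integrates by parts to produce $\int a^2|\partial^\alpha\,\text{div}\,w|^2 + \int a^2|\partial^\alpha\,\text{div}\,u|^2$ with matching signs and hence no loss. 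All remaining error terms from moving $a$ through $\partial^\alpha$ lose at most $\|\nabla a\|_\infty$, i.e.~fall under $\|(r,u)\|_{\mathcal{W}^{1,\infty}}$.

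Combining all estimates yields $\frac{d}{dt}E_N \lesssim C(\|(r,u)\|_{L^\infty\mathcal{H}^N})\,\|(r,u)\|_{\mathcal{W}^{1,\infty}}\,E_N$, and Grönwall's lemma then gives the announced exponential bound. Converting back from $(u,w,l)$ to $(r,u)$ uses the algebraic equivalence $w=\sqrt{K/\rho}\,\nabla\rho$ together with \eqref{comprule}, which is permissible under the smallness of $\|r\|_{W^{2,\infty}}$ (ensuring $\rho$ stays bounded away from $0$).
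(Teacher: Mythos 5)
Your proposal has a genuine gap at precisely the point you flag as ``the main obstacle.'' The symmetric pairing you write,
\[
\langle a\partial^\alpha w,-\nabla(a\,\text{div}\,\partial^\alpha w)\rangle
+\langle a\partial^\alpha u,\nabla(a\,\text{div}\,\partial^\alpha u)\rangle,
\]
is not what the energy method produces from \eqref{EEK}: the $w$-equation carries $\nabla(a\,\text{div}\,u)$ and the $u$-equation carries $-\nabla(a\,\text{div}\,w)$, so the correct pairings are the \emph{cross} terms $\langle a\partial^\alpha w,\partial^\alpha\nabla(a\,\text{div}\,u)\rangle$ and $\langle a\partial^\alpha u,-\partial^\alpha\nabla(a\,\text{div}\,w)\rangle$. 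After integrating by parts their top-order contributions $\pm\int a^2\,\text{div}\,\partial^\alpha u\,\text{div}\,\partial^\alpha w\,dx$ do cancel, but what remains is $\int a\nabla a\cdot(\text{div}\,\partial^\alpha w\,\partial^\alpha u-\text{div}\,\partial^\alpha u\,\partial^\alpha w)\,dx$, which still contains $N+1$ derivatives of $w\in H^N$. The same loss reappears in the commutator $[\partial^\alpha,a]\nabla\text{div}\,w$ (by Kato--Ponce, its leading part is $\nabla a\cdot\partial^{\alpha-1}\nabla\text{div}\,w$, again $N+1$ derivatives of $w$), and in the first-order antisymmetric term $i\nabla z\cdot w$ (equivalently $w\cdot\nabla w$ in the $u$-equation and $\nabla(u\cdot w)$ in the $w$-equation), which the paper explicitly singles out as the source of the difficulty. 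Your claim that ``all remaining error terms from moving $a$ through $\partial^\alpha$ lose at most $\|\nabla a\|_\infty$'' is therefore false, and your energy does not close.

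The paper's resolution, following \cite{Benzoni1}, is genuinely more structured than an $a$-weight: it uses $\Delta^n$ instead of $\partial^\alpha$, splits into potential and solenoidal components $\mathbb{Q},\mathbb{P}$, introduces the gauge $\varphi_n=a^n\sqrt{\rho}$ (chosen so that, via Lemma~\ref{magiecurl}, the $\mathbb{Q}$-part of the over-differentiated term drops out of the $L^2$ bracket because it is an antisymmetric first-order operator with gradient coefficient), and then introduces a \emph{second} gauge $\phi_n$ whose ODE $(\phi_n^2)'=-2na^{2n-1}\rho a'+a^{2n}$ is tuned so that the $\mathbb{P}$-part of the loss in the $\mathbb{Q}$-estimate is cancelled exactly by the corresponding loss in the $\mathbb{P}$-estimate. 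None of this is captured by the simple weight $a$. Your idea of including $\int g'(\rho)|\partial^\alpha l|^2$ in the energy is a reasonable alternative to the paper's ``correction of the linear drift'' via the mass equation, but that part of the argument is not where the difficulty lies; the missing ingredient is the gauge-and-split mechanism for the derivative loss.
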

The proof, not new, is postponed for completeness in 
appendix \ref{proofenergy}
\paragraph{Low transport energy estimate} 
\begin{prop}\label{energytransport}
Let $\MP u$ satisfy 
\begin{equation*}
\partial_t\MP u+\MP(u\cdot \n \MP u+\MP u\cdot \n\MQ u)
=0,
\end{equation*}
then for $p,q>1,\ k\in \N,\ 2k>d/q+1$ we have the a priori estimate 
\begin{equation}
\frac{d}{dt}\|\MP u\|_{W^{2k,p}}\lesssim (\|\MP u\|_
{W^{2k,q}}+\|\MQ u\|_{W^{2k,q}})\|\MP u\|_{W^{2k,p}}.
\end{equation}

\end{prop}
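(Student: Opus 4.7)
The plan is a careful $L^p$ transport energy estimate, in which both the solenoidal constraint $\mathrm{div}(v)=0$ on $v:=\MP u$ and the gradient structure of $\MQ u$ must be exploited to avoid a spurious loss of one derivative. Using $\MP=I-\MQ$, I first rewrite the equation as a transport equation with source:
\begin{equation*}
\partial_t v + u\cdot\n v = \MQ(u\cdot\n v) - \MP(v\cdot\n\MQ u).
\end{equation*}
Two structural identities are crucial. First, $\mathrm{div}(v)=0$ gives $\mathrm{div}(u\cdot\n v)=\n u:\n v^T$, so $\MQ(u\cdot\n v)=-\n(-\Delta)^{-1}(\n u:\n v^T)$; one derivative can be factored into Riesz compositions $\n(-\Delta)^{-1}\partial_j$, which are bounded on $L^p$. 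Second, writing $\MQ u=\n\phi$ gives $v\cdot\n\MQ u=\n(v\cdot\MQ u)-(\n v)\cdot\MQ u$, and since $\MP\circ\n=0$ one obtains $\MP(v\cdot\n\MQ u)=-\MP((\n v)\cdot\MQ u)$. Both rewritings are essential to keep the right-hand side linear in $\|\MP u\|_{W^{2k,p}}$ as the statement requires.

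Next, I apply $\partial^\alpha$ for $|\alpha|\leq 2k$, obtaining
\begin{equation*}
\partial_t\partial^\alpha v + u\cdot\n\partial^\alpha v = -[\partial^\alpha,u\cdot\n]v + \partial^\alpha\MQ(u\cdot\n v) - \partial^\alpha\MP(v\cdot\n\MQ u),
\end{equation*}
and perform the standard $L^p$ transport energy estimate by multiplying by $|\partial^\alpha v|^{p-2}\partial^\alpha v$ and integrating. The transport term on the left integrates by parts to $\frac{1}{p}\int\mathrm{div}(u)|\partial^\alpha v|^p\,dx=\frac{1}{p}\int\mathrm{div}(\MQ u)|\partial^\alpha v|^p\,dx$ (using $\mathrm{div}\,v=0$), bounded by $\|\n\MQ u\|_\infty\|\partial^\alpha v\|_{L^p}^p$, which is harmless because $2k>d/q+1$ yields the Sobolev embedding $W^{2k,q}\hookrightarrow W^{1,\infty}$.

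The three source terms are controlled by tame product and Kato--Ponce commutator estimates together with the same Sobolev embedding. The commutator $[\partial^\alpha,u\cdot\n]v$ is bounded by distributing derivatives so that one factor always lands in $W^{1,\infty}\subset W^{2k,q}$. For $\partial^\alpha\MQ(u\cdot\n v)$, the Riesz-smoothed rewriting lets us absorb one derivative of $\partial^\alpha$, reducing the $L^p$ bound to a tame product on $\n u\cdot\n v$ at order $2k-1$. For $\partial^\alpha\MP((\n v)\cdot\MQ u)$, distribution of the $2k$ derivatives puts at most $2k+1$ derivatives on $v$ and $2k$ on $\MQ u$; all contributions except the extreme $\partial^\alpha(\n v)\cdot\MQ u$ are immediately tame by Sobolev, while the top-order case is handled by a further integration by parts inside the $L^p$ pairing using that $\partial^\alpha v$ is divergence-free (directly for $p=2$, and by an analogous manipulation for general $p$).

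The main obstacle is precisely this apparent loss of a derivative in $\MP(v\cdot\n\MQ u)$: the factor $\n\MQ u$ is effectively of order $2k+1$ when paired with $\partial^\alpha$ for $|\alpha|=2k$, while we only control $\MQ u$ in $W^{2k,q}$. The resolution stems from the identity $\MP\circ\n=0$ together with $\MQ u$ being a gradient, which trades the dangerous $\n\MQ u$ factor for $\MQ u$ times $\n v$, restoring the derivative balance so that the estimate closes with $\|\MP u\|_{W^{2k,p}}$ as the only $p$-norm factor on the right-hand side.
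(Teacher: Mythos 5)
Your overall skeleton is the right one and shares the key mechanisms with the paper: an $L^p$ transport energy estimate in which $\mathrm{div}\,u=\mathrm{div}\,\MQ u$ handles the advective term, $W^{2k,q}\hookrightarrow W^{1,\infty}$ (from $2k>d/q+1$) handles the $L^\infty$ factors, and tame product estimates close the rest. Your treatment of $\MQ(u\cdot\n v)$ via $\mathrm{div}(u\cdot\n v)=\n u:\n v^T$ and Riesz smoothing is fine. But there is a genuine gap in the treatment of $\MP(v\cdot\n\MQ u)$, precisely where you locate the main obstacle.

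The difficulty: you apply the gradient identity $\MP(v\cdot\n\MQ u)=-\MP((\n v)^T\MQ u)$ \emph{before} differentiating. This moves the derivative excess from $\MQ u$ onto $v=\MP u$. After applying $\partial^\alpha$ with $|\alpha|=2k$, the top Leibniz term is $\MP\bigl[(\partial^\alpha\n v)^T\MQ u\bigr]$, which carries $2k+1$ derivatives on $\MP u$. You propose to absorb it by integrating by parts in the $L^p$ pairing with $|\partial^\alpha v|^{p-2}\partial^\alpha v$, and this does close at $p=2$: one can move $\MP$ across (since $\MP\partial^\alpha v=\partial^\alpha v$) and the only surviving term after integration by parts is harmless because $\mathrm{div}\,\partial^\alpha v=0$. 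But the claim that ``an analogous manipulation'' works for general $p$ is not substantiated, and the obvious attempt fails for two reasons. First, $\MP$ cannot be moved onto $|\partial^\alpha v|^{p-2}\partial^\alpha v$: that vector field is not solenoidal, and $\MQ\bigl(|\partial^\alpha v|^{p-2}\partial^\alpha v\bigr)$ carries no useful structure. Second, even ignoring $\MP$, the integration by parts on $\sum_{i,j}\int\partial_j(\partial^\alpha v)_i(\MQ u)_i|\partial^\alpha v|^{p-2}(\partial^\alpha v)_j\,dx$ produces, besides the vanishing divergence term and a tame commutator term, the contribution $\int(\MQ u\cdot\partial^\alpha v)\,\partial^\alpha v\cdot\n\bigl(|\partial^\alpha v|^{p-2}\bigr)\,dx$, which reintroduces a factor of order $2k+1$ on $\MP u$ and is not dominated by $\|\MP u\|_{W^{2k,p}}^p$.

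The correct order of operations (which is what the paper in effect does) is to differentiate first and apply the gradient identity only to the offending top-order Leibniz term. In $\partial^\alpha\MP(v\cdot\n\MQ u)$, the dangerous term is $\MP\bigl(v\cdot\n\partial^\alpha\MQ u\bigr)$ with $2k+1$ derivatives on $\MQ u$; since $\partial^\alpha\MQ u$ is still a gradient and $\MP\circ\n=0$, it equals $-\MP\bigl((\n v)^T\partial^\alpha\MQ u\bigr)$, which has one derivative on $v$ and $2k$ on $\MQ u$ — tame. The paper reaches the same cancellation more systematically by differentiating with the \emph{differential} operator $\Delta^k\MP=\Delta^k-\Delta^{k-1}\n\mathrm{div}$ and exhibiting the source as commutators; the top-order Leibniz term in $\Delta^k\MP(\MP u\cdot\n\MQ u)$ is $\MP u\cdot\n(\Delta^k\MP\MQ u)$, which vanishes identically because $\Delta^k\MP\MQ=0$. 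That cancellation requires no pairing or integration by parts and works uniformly in $p$. As written, your proof establishes the case $p=2$ but leaves the general-$p$ case unproved; adopting the late-stage gradient trick, or the commutator form of the source, closes the gap.
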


Energy estimates for transport type equations are
standard, see e.g. the textbook \cite{BCD} chapter 
$3$. Since the ``transport'' term is 
$\MP (u\cdot\n \MP u)$ rather than $u\cdot \n \MP u$, 
we include a short self-contained proof.
\begin{proof}
 Set $P_k=\Delta^k\MP u$, then $\Delta^k \MP=\Delta^k
 -\Delta^{k-1}\n \text{div}$ is a differential operator
 of order $2k$ so that 
 \begin{equation*}
\partial_t P_k+(u\cdot \n P_k)=R_k(\MP u,\MQ u),
 \end{equation*}
Note that since  $\Delta^k\MP \MQ=0$ we have 
$$R_k=-[\Delta^k\MP,u\cdot  \n]\MP u-\Delta^k\MP(\MP u\cdot \n \MQ u)=
-[\Delta^k\MP,u\cdot  \n]u-[\Delta^k\MP,\MP u\cdot \n] \MQ u.$$
We take the scalar product with $|P_k|^{p-2}P_k$ and integrate in space to get 
\begin{equation*}
\frac{d}{dt}\|P_k\|_p^p\lesssim \|\text{div}(u)\|_\infty\|P_k\|_p
+\|R_k|P_k|^{p-2}P_k\|_1.
\end{equation*}
Since $W^{2k,q}\subset W^{1,\infty}$, we are left to estimate terms of the 
form $\|\partial^\alpha \MP u\partial^\beta
v|P_k|^{p-1}\|_1$ with $v$ a placeholder for $\MP u$ or $\MQ u$, 
$|\alpha|+|\beta|=2k+1$. For $1/p_1+1/p_2=1/p$
we have 
\begin{eqnarray*}
\|\partial^\alpha \MP u\partial^\beta v|P_k|^{p-1}\|_1&\lesssim &
\|\MP u\|_{W^{|\alpha|,p_1}}\|v\|_{W^{|\beta|,p_2}}\|\MP u\|_{W^{2k,p}}^{p-1}
\\
&\lesssim& 
\|\MP u\|_{W^{2k,p}}\|v\|_{W^{|\beta|,p_2}}\|\MP u\|_{W^{2k,p}}^{p-1},
\end{eqnarray*}
provided $1/p-(2k-|\alpha|)/d\leq 1/p_1\leq 1/p$, which is equivalent to 
\begin{equation*}
0\leq \frac{1}{p_2}\leq \frac{2k-|\alpha|}{d}=\frac{|\beta|-1}{d}.
\end{equation*}
On the other hand the condition $W^{2k,q}\subset W^{|\beta|,p_2}$ is satisfied 
provided 
$\di \frac{1}{q}-\frac{2k-|\beta|}{d}\leq \frac{1}{p_2}$, the two conditions 
on $p_2$ lead to $1/q<(2k-1)/d$ which is the assumption.
We conclude
\begin{equation}\label{enhaut}
\frac{d}{dt}\|P_k\|_p^p\lesssim 
(\|\MP u\|_{W^{2k,q}}+\|\MQ u\|_{W^{2k,q}})\|\MP u\|_{W^{2k,p}}^{p}.
\end{equation}
Taking the $L^p$ norm in \eqref{transport} and using the continuity of 
$\MP:\  L^p\to L^p$ directly gives 
\begin{equation}\label{enbas}
 \frac{d}{dt}\|\MP u\|_p\lesssim (\|\MQ u\|_\infty +\|\MP u\|_\infty)
 \|\MP u\|_{W^{1,p}}
 \lesssim (\|\MQ u\|_{W^{2k,q}}+\|\MP u\|_{W^{2k,q}})
 \|\MP u\|_{W^{2k,p}}.
\end{equation}
Summing \eqref{enhaut} and \eqref{enbas} concludes.
\end{proof}

\section{Well-posedness for \texorpdfstring{$d\geq 5$}{d5}}\label{secd5}
The main result of this section is the following :
\begin{theo}\label{thd5}
Under assumptions \ref{assumptions}, 
for $d\geq 5$, there exists $(\varepsilon_0,c,N,k)\in 
(\R^{+*})^2\times \N^2$ such that for 
$\varepsilon\leq \varepsilon_0$, $\delta <<\varepsilon$,
if
$$\|(\rho_0-\alpha,u_0)\|_{\mathcal{H}^N\cap 
\mathcal{W}^{k,4/3}}\leq \varepsilon,\
\|\MP u_0\|_{W^{k,4}}\leq \delta,$$
then the solution of \eqref{EK} exists on $[0,T]$ with 
\begin{equation*}
T\geq \frac{c}{\delta}.
\end{equation*}
\end{theo}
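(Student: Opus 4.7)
The strategy is a bootstrap argument that faithfully mirrors the toy model of Proposition \ref{guide}: with $\psi=\MQ u+iU^{-1}w$ in the role of the dispersive variable $x$ and $\MP u$ in the role of the transport variable $y$, system \eqref{EEKschrod} decouples into a Schr\"odinger equation for $\psi$ (dispersion from Theorem \ref{dispersion}) and pure transport for $\MP u$. The $L^4$ decay rate $\langle t\rangle^{-d/4}$ is integrable in time precisely when $d\geq 5$, which is what makes this dimension range the straightforward case. Fix $N\gg k\gg 1$ with $k-1>d/4$ (so $W^{k,4}\hookrightarrow W^{1,\infty}$) and $2k>d/4+1$ (to apply Proposition \ref{energytransport}). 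The plan is to run the bootstrap: for a suitably large constant $C$, assume on a maximal interval $[0,T^*]$ that
\[
\|(r,u)\|_{\mathcal{H}^N}\leq C\varepsilon,\quad \|\psi\|_{W^{k,4}}\leq C\Big(\frac{\varepsilon}{\langle t\rangle^{d/4}}+\delta\Big),\quad \|\MP u\|_{W^{k,4}}\leq C\delta,
\]
and to prove strict improvement on $[0,c/\delta]$ for $\varepsilon,c$ small. Directly inspired by \eqref{ansatz}, the crucial choice is to allow a non-decaying $O(\delta)$ component in the dispersive norm of $\psi$ to absorb the ``constant'' forcing of the $\MP u$-dependent nonlinear terms.

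The high Sobolev and transport bounds close immediately. The embedding $W^{k,4}\hookrightarrow W^{1,\infty}$ combined with the equivalence $\|\psi\|_{W^{k,4}}\sim\|(r,\MQ u)\|_{\mathcal{W}^{k,4}}$ yields $\int_0^t\|(r,u)\|_{\mathcal{W}^{1,\infty}}\,ds\lesssim \varepsilon+\delta t\lesssim \varepsilon+c$ on $[0,c/\delta]$, so Proposition \ref{totenergy} gives $\|(r,u)\|_{\mathcal{H}^N}\leq e^{C_1(\varepsilon+c)}\varepsilon$, which beats $C\varepsilon$ once $\varepsilon,c$ are small and $C$ is large. Proposition \ref{energytransport} applied with $p=q=4$ yields the same type of Gr\"onwall estimate for $\|\MP u\|_{W^{k,4}}$.

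The delicate step, which I expect to be the main obstacle, is the dispersive bound. By Duhamel and Theorem \ref{dispersion},
\[
\|\psi(t)\|_{W^{k,4}}\lesssim \frac{\|\psi_0\|_{\mathcal{W}^{k',4/3}}}{\langle t\rangle^{d/4}}+\int_0^{t}\frac{\|\mathcal{N}_1(s)\|_{W^{k',4/3}}}{\langle t-s\rangle^{d/4}}\,ds,
\]
where $k'=k+(d-2)/4$ absorbs the low-frequency loss from $U^{(d-2)/4}$ and the singularity at $s=t$ is treated by Sobolev embedding rather than dispersion. An inspection of \eqref{nonlinearity} shows that every term is at least quadratic: even the apparently linear factor $(2-g')w$ is genuinely quadratic since Assumption \ref{assumptions} forces $g'(1)=2$. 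Tame products \eqref{productrule} then split each bilinear term into an $L^4$ factor (bounded either by $\|\psi\|_{W^{k,4}}$ or by $\|\MP u\|_{W^{k,4}}$) and a high-regularity $L^2$ factor (bounded by $\|(r,u)\|_{\mathcal{H}^N}$), producing
\[
\|\mathcal{N}_1(s)\|_{W^{k',4/3}}\lesssim C^2\varepsilon\Big(\frac{\varepsilon}{\langle s\rangle^{d/4}}+\delta\Big).
\]
The two convolution estimates $\int_0^t\langle t-s\rangle^{-d/4}\langle s\rangle^{-d/4}\,ds\lesssim \langle t\rangle^{-d/4}$ (which crucially uses $d/4>1$) and $\int_0^t\langle t-s\rangle^{-d/4}\,ds\lesssim 1$ then reproduce exactly the shape of the ansatz, yielding a right-hand side of order $\varepsilon^2\langle t\rangle^{-d/4}+\varepsilon\delta$, which closes the bootstrap once $\varepsilon,c$ are chosen small.

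The real conceptual difficulty is hidden in the $\MP u\cdot\MP u$ contributions to $\mathcal{N}_1$ (e.g.\ $\MQ(\MP u\cdot\n\MP u)$): these terms enjoy no Schr\"odinger-type decay and prevent $\|\psi\|_{W^{k,4}}$ from decaying below size $O(\delta)$. Recognising this as an intrinsic feature — and accommodating it by enlarging the dispersive ansatz with a constant $\delta$ floor rather than fighting for decay — is the whole content of the argument; it is precisely this mechanism that bounds the existence time by $1/\delta$, paralleling the finite-time blow-up of $y'=y^2$ in the ODE model.
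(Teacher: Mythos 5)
Your proposal reproduces the paper's argument essentially verbatim: the same three-tier bootstrap (total $\mathcal{H}^N$ energy via Proposition \ref{totenergy}, transport energy via Proposition \ref{energytransport}, and a dispersive $W^{k,4}$ bound with the crucial $O(\delta)$ floor absorbing the undecaying $\MP u$-driven forcing), closed on $[0,c/\delta]$ by the integrability of $\langle t\rangle^{-d/4}$ for $d\geq 5$. The only quibble is your $k'=k+(d-2)/4$ to ``absorb the low-frequency loss from $U^{(d-2)/4}$'': since $U$ is a bounded Fourier multiplier vanishing at $\xi=0$, the factor $U^{(d-2)(1/2-1/p)}$ in Theorem \ref{dispersion} is a gain (not a loss), so no extra derivatives are needed and the paper simply bounds $\|e^{itH}\psi_0\|_{W^{k,4}}\lesssim \|\psi_0\|_{W^{k,4/3}}/t^{d/4}$; this overcaution is harmless.
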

\noindent
We recall that the system satisfied by $\psi=\MQ u+iU^{-1}w$ and 
$\MP u$ is (see \eqref{EEKschrod})
\begin{equation}
\left\{
\begin{array}{lll}
\partial_t\psi-iH\psi&=&\mathcal{N}_1(\psi,\MP u),
\\
\partial_t\MP u+\MP(u\cdot\n \MP u+\MP u\cdot \n \MQ u)
 &=&0.
\end{array}
\right.
\end{equation}
We will prove a priori estimates for the 
solution in a space where local well-posedness holds.

\paragraph{The bootstrap argument } We shall prove 
the following property : for $c,\varepsilon$ small enough, there exists 
$C>0$ such that for $t\leq c/\delta$, 
if we have the estimates 
\begin{equation*}
\|\psi\|_{H^N}+\|\MP u\|_{H^N}\leq C \varepsilon,\ 
\|\psi\|_{W^{k,4}}\leq C\delta
+\frac{C\varepsilon}{(1+t)^{d/4}},
\ \|\MP u\|_{W^{k,4}}\leq C\delta,
\end{equation*}
that we respectively name  total energy, dispersive estimate and transport energy, then 
\begin{equation*}
\|\psi\|_{H^N}+\|\MP u\|_{H^N}\leq C \varepsilon/2,\ 
\|\psi\|_{W^{k,4}}\leq C\delta/2
+\frac{C\varepsilon}{2(1+t)^{d/4}},
\ \|\MP u\|_{W^{k,4}}\leq C\delta/2.
\end{equation*}
From now on, $C$ is only used for the constant of the 
bootstrap argument, while other constants are labelled
as $C_1,C_2$... and can change from line to line.
\paragraph{The energy estimate}
Since $\|\psi\|_{H^N}\sim \|(\rho-1,\MQ u)\|_{\mathcal{H}^N}$,
the energy estimate of proposition \ref{totenergy} 
implies for $k> d/4+1$
\begin{eqnarray*}
\|\psi\|_{H^N}+\|\MP u\|_{H^N}&\leq& C_1\|z_0\|_{H^N}\text{exp}
\bigg(C_2\int_0^t\|\psi\|_{W^{k,4}}+\|\MP u\|_{W^{k,4}}ds\bigg)\\
&\leq &
C_1\varepsilon\text{exp}(C_2C\big(2\delta t
+\varepsilon/(d/4-1))\big).
\end{eqnarray*}
Take $C\geq 2C_1e^1$, for $t\leq c\delta$, 
$\varepsilon,c$ small enough (depending on $C$) 
we have 
\begin{equation}\label{energy}
\|\psi\|_{H^N}+\|\MP u\|_{H^N}\leq C_1e^1\varepsilon\leq 
C\varepsilon/2.
\end{equation}
\paragraph{The transport energy estimate} 
We apply Proposition \ref{energytransport} with 
$p=q=4$, $k$ even, $4k>d$, $t\leq c/\delta$
\begin{eqnarray}\nonumber
\frac{d}{dt}\|\MP u\|_{W^{k,4}}&\lesssim &
(\|\MP u\|_{W^{k,4}}+\|\MQ u\|_{W^{k,4}})
\|\MP u\|_{W^{k,4}}\leq \bigg(C\delta +C\delta+
\frac{C\varepsilon}{(1+t)^{d/4}}\bigg)C\delta\\
\Rightarrow \|\MP u\|_{W^{k,4}}&\leq &
\delta\bigg(1+2C_1C^2c
+\frac{C^2C_1\varepsilon}{d/4-1}\bigg)
\leq 2\delta<C\delta/2, \label{transport}
\end{eqnarray}
for $c,\varepsilon$ small enough, $C>2$.
\paragraph{The dispersive estimate}
The first equation in \eqref{EEKschrod} rewrites 
\begin{equation*}
\psi(t)=e^{itH}\psi_0+\int_0^te^{i(t-s)H}\mathcal{N}_1(\psi,\MP u)ds,
\end{equation*}
The linear evolution $e^{itH}\psi_0$ is estimated with 
the dispersive estimate \eqref{dispSobo} and Sobolev embeddings 
\begin{equation}\label{estimlin}
\|e^{itH}\psi_0\|_{W^{k,4}}\lesssim
\min(\|\psi_0\|_{W^{k,4/3}}/t^{d/4},\|\psi_0\|_{H^{k+d/4}})
\lesssim \frac{\varepsilon}{(1+t)^{d/4}}.
\end{equation}
The structure of the nonlinearity does not matter here, 
the only important points are 
\begin{enumerate}
 \item The presence of $U^{-1}$ in 
 $U^{-1}\n\big((1-a)\text{div}\MQ u)\big)$ is not an 
 issue since $U^{-1}\n=\sqrt{2-\Delta}\n/|\n$ is the 
 composition of a smooth Fourier multiplier and the 
 Riesz multiplier,
 \item All nonlinear terms are at least quadratic, 
 and involve derivatives of order at most $2$.
\end{enumerate}
We only detail the estimate of $\MQ (\MQ u\cdot\n \MP 
u)$ as the others can be done in a similar (simpler) 
way. Using the dispersion estimate and Sobolev 
embedding
\begin{eqnarray*}
 \bigg\|\int_0^te^{i(t-s)H} \MQ(\MQ u\cdot \n \MP 
 u)ds\bigg\|_{W^{k,4}}
 &\lesssim& \int_0^{t-1}
 \frac{\|\MQ u\cdot \n \MP u\|_{W^{k,4/3}}}{(t-s)^{d/4}}ds+\int_{t-1}^t\|\MQ u\cdot \n\MP u\|_{H^{k+
 d/4}}ds.
\end{eqnarray*}
The product rules give 
\begin{eqnarray*}
\|\MQ u\cdot \n \MP u\|_{W^{k,4/3}}&\lesssim& 
\|\MQ u\|_{L^4}\|\MP u\|_{H^{k+1}}+\|\MQ u\|_{W^{k,4}}
\|\MP u\|_{H^1}\leq  2C^2\bigg(\delta 
+\frac{\varepsilon}{(1+s)^{d/4}}\bigg)\varepsilon,\\
\|\MQ u\cdot \n \MP u\|_{H^{k+d/4}}&\lesssim &
\|\MQ u\|_{W^{k+d/4,4}}\|\MP u\|_{W^{1,4}}
+\|\MQ u\|_{L^4}\|\MP u\|_{W^{k+1+d/4,4}}\\
&\lesssim& \|\MQ u\|_{H^N}\|\MP u\|_{W^{k,4}}
+\|\MQ u\|_{W^{k,4}}\|\MP u\|_{H^N}\\
&\leq& C^2\varepsilon\delta
+C^2\varepsilon\bigg(\delta+\frac{\varepsilon}{(1+s)^{d/4}}\bigg).
\end{eqnarray*}
The bootstrap assumption directly gives 
\begin{eqnarray*}
 \bigg\|\int_0^te^{i(t-s)H} \MQ(\MQ u\cdot \n \MP 
 u)ds\bigg\|_{W^{k,4}}&\leq &
 C_1C^2\bigg(\delta \varepsilon+\varepsilon^2
 \int_0^{t-1}\frac{1}{(1+s)^{d/4}(t-s)^{d/4}}ds\bigg)\\
 && +C_1C^2\varepsilon \bigg(\delta
 +\frac{\varepsilon}{(1+t)^{d/4}}\bigg) \\
 &\leq& C_2C^2\varepsilon\bigg(\delta+
 \frac{\varepsilon}{(1+t)^{d/4}}\bigg).
 \end{eqnarray*}
We conclude by using \eqref{estimlin}, for 
$C$ large enough, $\varepsilon$ small enough
\begin{equation}\label{bootdispersion}
\|z(t)\|_{W^{k,4}}\leq \frac{C_0\varepsilon}{(1+t)^{d/4}}
+C_1 C^2\varepsilon \bigg(\delta+
 \frac{\varepsilon}{(1+t)^{d/4}}\bigg)\leq \frac{C}{2}
 \bigg(\delta+\frac{\varepsilon}{(1+t)^{d/4}}\bigg).
\end{equation}
\paragraph{End of proof}
Putting together \eqref{energy}, \eqref{transport} 
and \eqref{bootdispersion}, we see that as long as the 
solution exists and $t\leq c/\delta$, $\|z\|_{\mathcal{H}^N}$ 
remains  small and $\rho$ remains 
bounded away from $0$. According to the blow up 
criterion the solution exists at least for 
$t\leq c/\delta$.
\section{Well-posedness for \texorpdfstring{$d=3,4$}{d34}}
\label{secd3}
This section is similar to the previous one 
but is significantly more technical.
The low dimension version of theorem \ref{thd5} reads
\begin{theo}\label{thd3}
Under assumptions \ref{assumptions}, 
for $d=3,4$, there exists $(\varepsilon,c,N,k)\in 
(\R^{+*})^2\times \N^2$, $p>\frac{2d}{d-2}$
such that for $\delta <<\varepsilon$, if
$$\|(r,u_0)\|_{\mathcal{H}^N\cap 
\mathcal{W}^{k,p'}}+\||x|(r_0,\MQ u_0)\|_{L^2}
\leq \varepsilon,\
\|\MP u_0\|_{W^{k,p'}\cap W^{k,p}}+
\||x|\MP u_0\|_{L^2}\leq \delta,$$
then the solution of \eqref{EK} exists on $[0,T]$ with 
\begin{equation*}
T\geq \frac{c}{\delta}.
\end{equation*}
\end{theo}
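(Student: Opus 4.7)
The plan is to run the same bootstrap strategy as in Section \ref{secd5}, adapted to the fact that in dimension $d = 3, 4$ the linear $L^{p'} \to L^p$ decay rate $(1+t)^{-d(1/2-1/p)}$ is only integrable in time when $p > 2d/(d-2)$, which is precisely the stated hypothesis. The bootstrap ansatz I would propose, for a single large constant $C$ to be determined, is
\begin{equation*}
\|\psi\|_{H^N} + \|\MP u\|_{H^N} \leq C\varepsilon,\quad \|\psi(t)\|_{W^{k,p}} \leq C\delta + \frac{C\varepsilon}{(1+t)^{d(1/2-1/p)}},
\end{equation*}
together with $\|\MP u\|_{W^{k,p'}\cap W^{k,p}} + \||x|\MP u\|_{L^2} \leq C\delta$ and a weighted scattering bound $\||x| f(t)\|_{L^2} \leq C\varepsilon\,\mu(t)$ for the profile $f(t) := e^{-itH}\psi(t)$, where $\mu$ is the mild (logarithmic or $(1+t)^{0+}$) weight that arises in the space--time resonance analysis of \cite{AudHasp2}.

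The total energy estimate and the transport estimate close essentially as in the $d \geq 5$ proof: Propositions \ref{totenergy} and \ref{energytransport} reduce matters to integrating $\|(\psi, \MP u)\|_{W^{1,\infty}}$ in time, which by Sobolev embedding $W^{k,p} \hookrightarrow W^{1,\infty}$ (valid for $k > 1+d/p$) is bounded by $C\varepsilon \int_0^\infty (1+s)^{-d(1/2-1/p)} ds + C\delta t$, and the second term is at most $c$ on $[0, c/\delta]$. The weighted $L^2$ bound on $\MP u$ is propagated by commuting multiplication by $x_j$ with the transport equation: the commutator $[x_j, u\cdot\nabla]\MP u = -u_j \MP u$ together with $[x_j, \MP]$ (a $0$-order principal part plus a bounded operator) produces only perturbations that close by Gr\"onwall using quantities already controlled by the other parts of the bootstrap.

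The dispersive estimate is the substantive step and the main obstacle, because the naive Duhamel bound used in Section \ref{secd5} involves the convolution $\int_0^t (1+s)^{-d(1/2-1/p)}(t-s)^{-d(1/2-1/p)} ds$, which is no longer uniformly bounded once the exponent is close to $1$. Instead I would argue at the profile level, $f(t) = f(0) + \int_0^t e^{-isH}\mathcal{N}_1\, ds$, and decompose $\mathcal{N}_1$ into the purely dispersive quadratic part $Q_1(\psi, \psi)$, the mixed part $Q_2(\psi, \MP u)$, the transport--transport part $Q_3(\MP u, \MP u)$, and a cubic remainder. The term $Q_1$ is treated exactly as in \cite{AudHasp2}: integration by parts in $s$ on the non-stationary set of the resonant phase $H(\xi) - H(\eta) - H(\xi-\eta)$, and integration by parts in $\eta$ on the time-resonant set, produce boundary and remainder terms controlled by $\||x|f\|_{L^2}$. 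The mixed term $Q_2$ carries an extra factor of $\delta$ through $\MP u$, so its Duhamel contribution is bounded either by $C\varepsilon\delta\int_0^t (1+s)^{-d(1/2-1/p)}ds \lesssim \varepsilon\delta$ or by $C\delta^2 t \leq c\delta$ on $[0, c/\delta]$; the weighted bound $\||x|\MP u\|_{L^2}$ enters precisely in the frequency integration by parts applied to these mixed bilinear multipliers. The interaction $Q_3$ is dominated by $\delta^2 t \lesssim \delta$. The conceptual heart of the argument, already visible in the toy system \eqref{ODE}, is that every nonlinear interaction with $\MP u$ costs one factor of $\delta$ but no decay in $t$, which is precisely the budget needed to extend the lifespan from $O(1)$ to $O(1/\delta)$. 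Once each bootstrap norm is improved by a factor of $1/2$, Theorem \ref{locWP} extends the solution up to $T = c/\delta$.
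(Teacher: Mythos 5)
Your outline captures the general shape of the proof -- a four-part bootstrap (total energy, transport energy with a weighted $L^2$ bound on $\MP u$, a $W^{k,p}$ decay bound, a weighted profile bound), with the purely dispersive quadratic terms handled via the space--time resonance machinery of \cite{AudHasp2} and the $\MP u$-interactions estimated directly, each carrying a factor $\delta$. The conceptual reading (cost one $\delta$ per interaction with the vorticity, no decay needed, budget exactly $1/\delta$) is correct and matches Proposition \ref{guide}. However there are two substantive gaps.

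First, you never mention the quadratic normal form. The paper's Lemma \ref{formenorm} replaces $w$ by $w_1 = w - \nabla(B[w,w]-B[\MQ u,\MQ u])$ so that the non-well-prepared quadratic term $iU^{-1}\nabla\big((1-a)\,\mathrm{div}\,\MQ u\big)$ becomes the divergence-form $iU^{-1}\nabla\,\mathrm{div}\,\big((1-a)\MQ u\big)$, which has the low-frequency cancellation needed for the resonance analysis. Simply deferring ``$Q_1$ is treated exactly as in \cite{AudHasp2}'' does not suffice here, because the transform now generates \emph{new cubic cross terms mixing $\MP u$ and $\MQ u$} (e.g.\ $U^{-1}\nabla B[\MQ(u\cdot\nabla\MP u),\MQ u]$, cf.\ Remark \ref{rmqcubic}) that are not present in \cite{AudHasp2} and must be separately bounded in both norms; the paper does this in \eqref{cubic3}--\eqref{cubic4} and in the cubic paragraph of the $W^{k,p}$ estimate. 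Your argument as written never confronts these.

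Second, the weighted profile estimate on the mixed term is where the lifespan $c/\delta$ with $c=O(\varepsilon)$ is actually pinned down, and your treatment of it is both vague and slightly wrong in mechanism. You say the weighted $L^2$ bound on $\MP u$ ``enters precisely in the frequency integration by parts applied to these mixed bilinear multipliers,'' but the paper does \emph{not} integrate by parts in $\eta$ on the $\mathcal{T}$ terms: it commutes $x_j$ through $e^{-isH}U^{-1}\nabla$ directly, and the commutator $[x_j,e^{-isH}]=s\,\partial_j H\,e^{-isH}$ produces a factor $s$ in the integrand (estimate \eqref{weight2}). This factor $s$ is what makes the quadratic contribution in \eqref{quadweight} behave like $\delta^2 t^2 \sim c^2$ rather than $c\delta$, and it is precisely what forces the choice $c\lesssim\varepsilon$ noted in Remark \ref{smallbizarre}. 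Your bound ``$C\varepsilon\delta\int_0^t(1+s)^{-d(1/2-1/p)}\,ds\lesssim\varepsilon\delta$ or $C\delta^2 t\leq c\delta$'' misses this secular growth entirely; replacing it by a nonspecific weight $\mu(t)$ in the profile bound does not address where the loss comes from or how it is absorbed. A correct proof must either carry out the commutator computation or explain an alternative way to close the weighted bound, and must make explicit that this (and only this) step dictates $c=O(\varepsilon)$.

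A minor further point: the $W^{k,p}$ bound on the mixed Duhamel integral is not a straightforward H\"older bound as your sketch suggests; the paper needs a two-parameter interpolation in $\|\MP u\|_{H^{k+1}}$ and $\|\varphi\|_{W^{k,q}}$ (with $1/q+1/2=1/p'$, $\theta=1/(p/2-1)$) and the choice $6<p<8$ so that $\theta>1/3$, leading to \eqref{quaddecay}. This is not just bookkeeping: with a naive estimate the decay exponent falls short.
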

\begin{rmq}
Unlike $d\geq 5$, one can not directly use the 
dispersive estimate to get closed bounds. This 
approach works for cubic and higher order 
nonlinearities, but not for quadratic terms. Therefore the emphasis
is put here on how to control quadratic terms, while the analysis of higher
order terms is much less detailed. We label such terms as ``\textit{cubic}''
and they are generically denoted $R$. The fact that they include loss of 
derivatives is unimportant. 
\end{rmq}
\noindent
For $\psi:\ [0,T]\times \R^d\to \C^d$, and $C$ 
a constant to choose later, we use the 
following notations:
\begin{eqnarray*}
\|\psi\|_{X(t)}&=&\max\big(\|\psi(t)\|_{H^N}+\|xe^{-itH}\psi\|_{L^2},\ (1+t)^{3(1/2-1/p)}(\|\psi\|_{W^{k,p}}-C\delta)\big),\\
\|\psi\|_{X_T}&=&\sup_{[0,T]}\|\psi\|_{X(t)}.
\end{eqnarray*}
For simplicity of notations, we only consider the (most difficult) case 
$d=3$. 
\subsection{Preparation of the equations}
We recall that the extended system is 
\begin{equation}
\left\{
\begin{array}{lll}
\partial_t\psi-iH\psi&=&\mathcal{N}_1(\psi,\MP u)+R,\text{ R cubic},
\\
\partial_t\MP u+\MP(u\cdot\n \MP u+\MP u\cdot \n \MQ u)
 &=&0,
\end{array}
\right.
\end{equation}
\begin{eqnarray*}
\mathcal{N}_1= \n ((1-a)\text{div}w)+(2-g')w
-\frac{1}{2}\n(|\MQ u|^2-|w|^2)
+iU^{-1}\n \big((1-a)\text{div}\MQ u-u\cdot w\big)
\\
-\MQ(u\cdot \n\MP u+\MP u\cdot \n \MQ u),
\end{eqnarray*}
the first line of the nonlinearity $\mathcal{N}_1$
depends only on the dispersive variable 
$\psi$ (``purely dispersive terms'') 
while the second line contains interaction between $\psi$ and the transport component $\MP u$ (``dispersive-transport terms'').\\
In order to apply the method of space-time resonances, it is useful  
that the Fourier transform of the purely dispersive nonlinear terms cancels 
at $0$. As such, the real part $\n ((1-a)\text{div}w)+(2-g')w
+\frac{1}{2}\n(|\MQ u|^2-|w|^2)$ is well prepared, but not the 
imaginary part $U^{-1}\n ((1-a)\text{div}\MQ u)$. We refer to the 
discussion at the beginning of section  $5$ in \cite{AudHasp2} for a 
more detailed motivation.\\
As in \cite{AudHasp2} (see also \cite{GNT3}) we use the following normal 
form transform:
\begin{lemma} \label{formenorm} 
For
\begin{equation*}
w_1=w-\n \big(B[w,w]-B[\MQ u,\MQ u]\big).
\end{equation*}
with $B$ the bilinear Fourier multiplier of 
symbol $\di \frac{a'(1)-1}{2(2+|\eta|^2)
+|\xi-\eta|^2)}$. Then $w_1$ satisfies 
\begin{equation}\label{eqw1}
\partial_tw_1+\Delta \MQ u=
\n \text{div}((1-a)\MQ u)+R,
\end{equation}
where $R$ contains cubic and higher order 
nonlinearities in $\MQ u,\MP u, l$.\\
Moreover, for any $T>0$ the map $\psi=\MQ u+iU^{-1}w\to \MQ u+iU^{-1}w_1$ 
is bi-lipschitz on a neighbourhood of $0$ in $X_T$, 
it is also bi-Lipschitz near $0$ for the norm  
$\|\psi_0\|_{H^N\cap W^{k,p'}}+\||x|\psi_0\|_{L^2}$.
\end{lemma}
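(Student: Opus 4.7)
The plan is to apply a Shatah-type normal form: differentiate $w_1$ in time, substitute \eqref{EEKvort2}, and choose the symbol $b$ so that the quadratic terms cleanly rearrange into $\nabla\text{div}((1-a)\MQ u)$ modulo cubic remainders. No small-divisor obstruction arises: the denominator $2(2+|\eta|^2) + |\xi-\eta|^2$ is bounded below by $4$, so $B$ is a smooth Coifman-Meyer multiplier.

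First I use Leibniz to write
\[
\partial_t w_1 = \partial_t w - \nabla\bigl(B[\partial_t w, w] + B[w, \partial_t w] - B[\partial_t \MQ u, \MQ u] - B[\MQ u, \partial_t \MQ u]\bigr),
\]
and then substitute $\partial_t w = -\Delta\MQ u + Q_w$, $\partial_t \MQ u = (\Delta - 2)w + Q_u$, where $Q_w$, $Q_u$ are the quadratic right-hand sides of \eqref{EEKvort2} (after using $a(1)=1$, $g'(1)=2$). Since $Q_w, Q_u$ are already quadratic, their insertion into $B$ produces cubic or higher terms that are absorbed into $R$. The remaining leading quadratic content becomes
\[
\partial_t w_1 + \Delta\MQ u = Q_w + \nabla B[\Delta\MQ u, w] + \nabla B[w, \Delta\MQ u] + \nabla B[(\Delta - 2)w, \MQ u] + \nabla B[\MQ u, (\Delta - 2)w] + R.
\]
Expanding $\nabla(1-a) = -a'(1)w + O(lw)$ yields $\nabla\text{div}((1-a)\MQ u) = \nabla((1-a)\text{div}\MQ u) - a'(1)\nabla(\MQ u \cdot w) + \text{cubic}$. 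Comparing this with $Q_w = \nabla((1-a)\text{div}\MQ u) - \nabla(\MQ u\cdot w) - \nabla(\MP u \cdot w)$, the lemma reduces to a scalar algebraic equation on $b$, whose unique solution is the stated formula; the denominator is the combined contribution of the symbols $|\eta|^2$ (from $\Delta\MQ u$) and $|\xi-\eta|^2 + 2$ (from $(\Delta - 2)w$). The transport-dispersive piece $\nabla(\MP u\cdot w)$ cannot be absorbed by a normal form involving only $w$ and $\MQ u$; it is collected in $R$, consistent with the subsequent analysis in which $\MP u$-dependent nonlinearities are treated separately from purely dispersive ones.

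For the bi-Lipschitz property, the symbol $b$ is smooth and satisfies $|\partial^\alpha b| \lesssim (|\eta| + |\xi-\eta|)^{-|\alpha|}$, so Coifman-Meyer gives continuity of $B$ on products of Lebesgue and Sobolev spaces, while $U^{-1}\nabla$ has symbol $i\xi\sqrt{2+|\xi|^2}/|\xi|$ and is Mihlin-H\"ormander after absorbing one derivative. Combined with the tame rules \eqref{productrule}, \eqref{comprule}, the correction $\psi_1 - \psi = -iU^{-1}\nabla(B[w,w] - B[\MQ u, \MQ u])$ is quadratic in $\psi$ in $H^N \cap W^{k,p'}$ and in the $|x|$-weighted $L^2$ norm, so bi-Lipschitz invertibility on a neighbourhood of $0$ for the initial-data norm follows from a standard Banach contraction argument. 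The main technical obstacle is the bi-Lipschitz estimate in $X_T$ for the profile-weighted term $\|xe^{-itH}\psi\|_{L^2}$: writing $x = i\nabla_\xi$ in Fourier and commuting past $e^{-itH(\xi)}$ produces factors of $t\nabla H(\xi)$, which must be controlled by re-expressing the bilinear correction in the profile $f = e^{-itH}\psi$ and integrating by parts against the non-resonant phase $\Phi(\xi,\eta) = H(\xi) - H(\eta) - H(\xi-\eta)$. This is the standard space-time resonance computation carried out in detail in \cite{AudHasp2}.
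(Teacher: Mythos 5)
Your proposal is correct and follows the same route as the paper: differentiate $w_1$ in time, substitute the evolution equations \eqref{EEKvort2}, and verify that the symbol choice makes $(a'(1)-1)\nabla(w\cdot\MQ u)+2\nabla B[w,\Delta\MQ u]+2\nabla B[(\Delta-2)w,\MQ u]$ vanish identically on the Fourier side, with the bi-Lipschitz claims deferred (as in the paper) to propositions 5.4--5.5 of \cite{AudHasp2}. You also correctly flag the one subtlety the lemma statement elides: the quadratic transport--dispersive term $\nabla(\MP u\cdot w)$ is not cubic and cannot be removed by a normal form in $(w,\MQ u)$ alone, which is why the paper's own proof and \eqref{equfinal} keep it explicit and treat it separately.
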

\begin{proof}
According to \eqref{EEKvort2} $w$ satisfies 
\begin{eqnarray*}
\partial_tw+\Delta \MQ u&=&\n \big((1-a)\text{div}\MQ u\big)
-\n(u\cdot w)\\
&=&\n\text{div}\big((1-a)\MQ u\big)
+\n\big(\n a \cdot \MQ u\big) -\n(\MQ u\cdot w)-\n(\MP u\cdot w)\\
&=&\n\text{div}\big((1-a)\MQ u\big)+\n\big( (a'(1)-1)w\cdot \MQ u\big)
-\n (\MP u\cdot w)+R,
\end{eqnarray*}
with $R=\n\big((\n a-a'(1)w)\cdot \MQ u\big)$ a cubic term. Then $w_1$
satisfies
\begin{eqnarray*}
\partial_tw_1+\Delta \MQ u&=&\n\text{div}\big((1-a)\MQ u\big)
-\n (\MP u\cdot w)+R\\
&& +\n\bigg((a'(1)-1)w\cdot \MQ u+2B[w,\Delta \MQ u]
+2B[(\Delta-2)w,\MQ u]\bigg)\\
&=&\n\text{div}\big((1-a)\MQ u\big)
-\n (\MP u\cdot w)+R,
\end{eqnarray*}
by construction of $B$ (note that $R$ 
includes now terms like 
$\n B[\MQ (u\cdot \n \MP u),\MQ u]$ that have all a 
gradient in factor). 
The fact that $w\to w_1$ is bi-Lipschitz 
is proposition $5.4$ and proposition $5.5$ 
in \cite{AudHasp2}.
\end{proof}

\paragraph{Final form of the equations}
We define $b(\MQ u,w)=B[w,w]-B[\MQ u,\MQ u]$ so that $w_1=w-\n b(\MQ u,w)=
w-\n b(\MQ u,w_1)+R$, $R$ cubic. The new system on $\Psi=\MQ u+iU^{-1}w_1$ and $\MP u$ is 
\begin{equation}
\left\{
\begin{array}{lll}
\partial_t\Psi-iH \Psi&=&\n\bigg((\Delta-2)b+
(a-1)\text{div}w_1
-\frac{1}{2}\big(|\MQ u|^2-|w_1|^2\big)\bigg)
+(2-g')w_1
\\
&&+iU^{-1}\n \text{div}\big((1-a)\MQ u\big)\\
\label{equfinal}
&&-iU^{-1}\n (\MP u\cdot w_1)-\MQ(u\cdot \n \MP u+\MP u\cdot \n \MQ u)+R,\\
\partial_t\MP u&=&-\MP (u\cdot \n \MP u+\MP u\cdot \n \MQ u),
\end{array}
\right.
\end{equation}
with $R$ containing cubic  terms.
\begin{rmq}\label{rmqcubic}
Note that all cubic terms in \eqref{eqw1}
are gradients of the unknowns (see also the system
\eqref{EEKvort2}), therefore 
the change of variables $w_1\to U^{-1}w_1$ creates 
no nonlinearities with singularity at low frequency. For example, the new term 
$\n B[\MQ (u\cdot \n \MP u),\MQ u]$ becomes 
$U^{-1}\n B[\MQ(u\cdot \n \MP u),\MQ u]$.
\end{rmq}

Note that $(1-a)\MQ u$ is a quadratic term since at main order it is 
$-a'(1)l\MQ u$, with $l=\Delta^{-1}\text{div}w$.
\begin{rmq}\label{remarkequivnorm}
An important consequence of lemma \ref{formenorm} is 
that it suffices to estimate $\Psi$ instead of 
$\psi$, and the smallness of $\psi_0$
implies the smallness of $\Psi_0$. 
\end{rmq}
According to the remark above, it is sufficient to 
prove the following :
\begin{theo}
Under assumptions $2.1$, there exists $\varepsilon,
c,N,k\in (\R^{+*})^2\times \N^2,\ p>2d/(d-2)$ such that 
for $\delta<<\varepsilon$, if 
\begin{equation*}
\|\Psi_0\|_{H^N\cap W^{k,p'}}+\||x|\Psi_0\|_{L^2}+
\|\MP u_0\|_{H^N}\leq 
\varepsilon,\ \|\MP u_0\|_{W^{k,p}\cap W^{k,p'}}+
\||x|\MP u_0\|_{L^2}\leq \delta,
\end{equation*}
then the solution of \eqref{equfinal} exists on 
$[0,T]$, $T\geq c/\delta$ and 
$\|\Psi\|_{X_T}\lesssim \varepsilon$.\\
This result implies theorem \ref{thd3}.
\end{theo}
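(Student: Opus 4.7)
The plan is to run a bootstrap argument in the norm $\|\Psi\|_{X_T}$ coupled with a transport norm on $\MP u$, mirroring the $d\geq 5$ proof of Theorem \ref{thd5} but replacing the naive dispersive estimate with the space-time resonance machinery of \cite{AudHasp2,GNT3} to compensate for the weaker decay in $d=3,4$. For a large constant $C$ and $t\leq c/\delta$, the ansatz is
\begin{equation*}
\|\Psi\|_{H^N}+\|xe^{-itH}\Psi\|_{L^2}\leq C\varepsilon,\quad \|\Psi\|_{W^{k,p}}\leq C\delta+\frac{C\varepsilon}{(1+t)^{3(1/2-1/p)}},\quad \|\MP u\|_{H^N\cap W^{k,p'}\cap W^{k,p}}+\||x|\MP u\|_{L^2}\leq C\delta,
\end{equation*}
and the aim is to upgrade every $C$ to $C/2$. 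The $H^N$ bound on $(\Psi,\MP u)$ follows as in Section \ref{secd5} from Proposition \ref{totenergy}, using that $3(1/2-1/p)>1$ for $p>2d/(d-2)=6$ so that $\int_0^{c/\delta}\|\Psi\|_{W^{k,p}}ds\lesssim c+\varepsilon$ is summable; the transport estimate on $\MP u$ follows from Proposition \ref{energytransport} applied with $q=p$ and $q=p'$.

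The dispersive estimate on $\|\Psi\|_{W^{k,p}}$ is where the strategy diverges from dimension $5$. Writing $\Psi(t)=e^{itH}\Psi_0+\int_0^t e^{i(t-s)H}\mathcal{N}\,ds$, the linear piece is controlled by Theorem \ref{dispersion}. Cubic remainders and dispersive-transport quadratic terms --- those carrying at least one factor of $\MP u$, such as $-iU^{-1}\n(\MP u\cdot w_1)$ and $\MQ(u\cdot\n\MP u+\MP u\cdot\n\MQ u)$ --- are harmless: the $\MP u$ factor contributes $\delta$ while the remaining $\Psi$-factor is absorbed by the decaying $W^{k,p}$ bound, reproducing the desired $\delta+\varepsilon(1+t)^{-3(1/2-1/p)}$ structure. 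The delicate contributions are the purely dispersive quadratic terms, for which naive $L^p$--$L^{p'}$ dispersion produces $\int_0^t(1+t-s)^{-3(1/2-1/p)}(1+s)^{-3(1/2-1/p)}ds$, an integral that fails to close in $d=3$. Here one uses that the normal form of Lemma \ref{formenorm} has killed the low-frequency spatial resonance at $\xi=0$, then invokes the weighted bound $\|xe^{-isH}\Psi\|_{L^2}\leq C\varepsilon$ to gain the missing $1/s$ decay, as in Section $5$ of \cite{AudHasp2}.

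The main obstacle, and the justification for the weighted norm in $X_T$, is closing the bound on $\|xe^{-itH}\Psi\|_{L^2}$ itself. Conjugating the equation by $e^{-itH}$, the linear commutator $[x,H]$ produces a smooth multiplier bounded through the $H^N$ control, which reduces the task to estimating $\int_0^t\|e^{-isH}(x\mathcal{N}(\Psi,\MP u))\|_{L^2}\,ds$. Each bilinear piece $B[f,g]$ turns into a multiplier of symbol $\n_\xi B$ acting on the profiles $e^{-isH}\Psi$ or $\MP u$; dispersive-transport contributions exploit the $\delta$-smallness of $\MP u$ and the bound $\||x|\MP u\|_{L^2}\leq \delta$ (plus \eqref{productrule} to distribute derivatives), while the purely dispersive quadratic pieces demand the space-time resonance splitting: integration by parts in $\eta$ near the spatial-resonant set and in $s$ away from the time-resonant set, after a suitable decomposition of $\n_\xi B$. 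I expect this last step to be the genuine technical bottleneck, as the required multiplier bookkeeping for purely dispersive interactions essentially duplicates that of \cite{AudHasp2}; my plan is therefore to invoke that reference together with Appendix \ref{spacetime} for the full multiplier analysis and to direct the new work at the bilinear terms coupling $\Psi$ with the transport component $\MP u$, where the $\delta$-factor makes the closure straightforward once the weighted norm is in hand.
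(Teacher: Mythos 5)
Your overall bootstrap structure matches the paper's: the same three a priori bounds, the energy estimate from Proposition \ref{totenergy}, the transport estimate from Proposition \ref{energytransport}, and the split of the nonlinearity into purely dispersive terms (handled by space-time resonances via the appendix) and dispersive-transport terms. But there is a concrete error in your sketch of the weighted bound, and it happens to sit exactly on the one genuinely new obstruction in this dimension.

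You write that ``the linear commutator $[x,H]$ produces a smooth multiplier bounded through the $H^N$ control,'' and you then treat the dispersive-transport quadratic term as harmless because it carries a $\delta$-factor. This is not what happens. What actually appears when you pull $x_j e^{-itH}$ inside the Duhamel integral is the commutator $[x_j, e^{-isH}] = is\,\partial_{\xi_j}H(D)\,e^{-isH}$, which grows \emph{linearly in} $s$. Concretely, with $m(\xi,s)=i\partial_{\xi_j}\big(\xi U^{-1}(\xi)e^{-isH(\xi)}\big)=m_1+m_2$, the piece $m_2 = \xi U^{-1}e^{-isH}s\,\partial_{\xi_j}H$ satisfies $|m_2|\lesssim (1+|\xi|)^2 s$, and applying it to $\MP u\cdot \varphi$ gives a contribution $\int_0^t s\,\|\MP u\|_{H^2}\|\varphi\|_{W^{k,p}}\,ds$. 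Over $[0,c/\delta]$ the $\delta\cdot\delta$ part of this integrand accumulates to $\delta^2\int_0^{c/\delta}(1+s)\,ds\sim c^2$, which is \emph{not} $O(\varepsilon\delta)$ and cannot be improved by the smallness of $\MP u$. Closing the weighted estimate therefore forces $c\lesssim\varepsilon$; this is precisely the content of Remark \ref{smallbizarre} and the estimate \eqref{quadweight}. Under your reading, the weighted norm would propagate on $[0,c/\delta]$ with $c$ independent of $\varepsilon$, which would give a stronger theorem than is actually proved and which does not follow from the argument. You should make the $s$-growth in the commutator explicit and record the resulting constraint $c\sim\varepsilon$.

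Two smaller omissions: first, you include $\||x|\MP u\|_{L^2}\leq C\delta$ in the ansatz but do not explain how to propagate it; this is a separate weighted energy estimate for the transport equation (it uses that $[x_j,\MP]$ is bounded $L^{6/5}\to L^2$, and commutes $x_j$ through $\MP(u\cdot\nabla\,\cdot)$), and it is not contained in Proposition \ref{energytransport}. Second, the $W^{k,p}$ bound for the dispersive-transport quadratic term is not quite ``straightforward'': it requires interpolating $\|\MP u\|_{H^{k+1}}$ between the $H^N$ and $W^{k,p}$ bootstrap norms and $\|\varphi\|_{W^{k,q}}$ between $W^{k,p}$ and $H^k$, with a specific constraint $\theta\in(1/3,1/2)$ on the interpolation exponent (hence $6<p<8$) in order to make the time decay summable. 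You should either carry out that interpolation or at least flag why the naive $L^{p'}\to L^p$ dispersion fails without it.
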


\subsection{The bootstrap argument}
\paragraph{A priori estimates} 
The aim of this paragraph and the next one is to prove that 
for $c,\varepsilon$ small enough, there exists 
$C>0$ such that for $t\leq c\delta$, 
if we have the following estimates
\begin{equation}\label{boot}
\left\{
\begin{array}{ll}
\di \|\Psi\|_{H^N}\leq C \varepsilon&\text{ (total energy)},\\
\di \||x|e^{-itH}\Psi\|_{L^2}\leq C\varepsilon,\ 
\|\Psi\|_{W^{k,p}}\leq C\delta
+\frac{C\varepsilon}{(1+t)^{d(1/2-1/p)}}
&\text{ (dispersive estimates)},
\\
\di \|\MP u\|_{W^{k,q}}+\||x|\MP u_0\|_{L^2}\leq C\delta& \text{ (transport energy)},
\end{array}\right.
\end{equation}
then the same estimates hold with
$C/2$ instead of $C$.
\begin{rmq}
We point out that the bootstrap argument is slightly different from 
the one for theorem \ref{thd5}. Indeed in large dimension, we can propagate the 
a priori bounds (up to multiplicative constants independent
of $\varepsilon,\delta$) 
on a time $c/\delta$ while for $d=3,4$ the proof implies 
$c=O(\varepsilon)$. In other words, 
if $\|\Psi_0\|_{H^N}\leq \varepsilon'<\varepsilon$ it is not clear if  
$\|\Psi(t)\|_{H^N}\lesssim \varepsilon'$ on $[0,c/\delta]$ with $\delta$ 
independent of $\varepsilon'$, see remark \ref{smallbizarre} for technical 
details.
\end{rmq}

The dispersive estimates are significantly more difficult than for $d\geq 5$ 
and are detailed in paragraph \ref{estdisp}.
\paragraph{The energy estimate} This is the same 
argument as for $d\geq 5$, from proposition \ref{totenergy} and using $3(1/2-1/p)>1$ (integrability of the decay)
\begin{equation*}
\|\Psi\|_{H^N}+\|\MP u\|_{H^N}\lesssim 
C_1\varepsilon \text{exp}\bigg(C_2C
\bigg(2\delta t+\frac{\varepsilon}{3(1/2-1/p)-1}\bigg)\bigg),
\end{equation*}
so that for $C$ large enough, $\varepsilon,c$ small enough, $t\leq c/\delta$
\begin{equation}\label{energyd3}
\|\Psi\|_{H^N}+\|\MP u\|_{H^N}\leq C\varepsilon/2.
\end{equation}
\paragraph{The transport energy estimate}
The $W^{k,q}$ estimate is a consequence of proposition
\ref{energytransport} as for $d\geq 5$: for $k$ even
large enough
\begin{eqnarray}\nonumber
\frac{d}{dt}\|\MP u\|_{W^{k,p}}\leq
C_1(\|\MP u\|_{W^{k,p}}+\|\MQ u\|_{W^{k,p}})\|\MP u\|_{W^{k,p}}
\lesssim C_1C^2\bigg(\delta+\frac{\varepsilon}{t^{1+}}\bigg)
\delta\\
\Rightarrow 
\|\MP u\|_{W^{k,p}}\leq C_1C^2\delta(c+C_2\varepsilon)
\leq \frac{C}{2}\delta, \label{transportenergyd3easy}
\end{eqnarray}
for $c,\varepsilon$ small enough.
The same estimate (with indices $q\leq p$) applied 
again gives 
\begin{equation*}
\|\MP u\|_{W^{k,q}}\leq C_1\int_0^t(\|\MP u\|_{W^{k,p}}
+\|\MQ u\|_{W^{k,p}})\|\MP u\|_{W^{k,q}}ds
\leq \frac{C\delta}{2}.
\end{equation*}
For the weighted estimate we follow a similar energy method. 
First multiply the equation 
on $\MP u$ by $x_j$ :
\begin{eqnarray*}
\partial_t (x_j\MP u)+x_j\MP (u\cdot \n \MP u+
\MP u\cdot \n \MQ u)&=&
\partial_t (x_j\MP u)+\MP \big(u\cdot \n (x_j\MP u)
+x_j\MP u\cdot \n \MQ u\big)
\\
&&+[x_j,\MP] (u\cdot \n\MP u+\MP u\cdot \n \MQ u)
+\MP \big([x_j,u\cdot \n]\MP u\big)\\
&=&=0.
\end{eqnarray*}
The operator $[x_j,\MP]$ is the Fourier multiplier 
of symbol $i\partial_{\xi_j}\MP(\xi)$ which is 
dominated by $1/|\xi|$ therefore it is bounded
$\dot{H}^{-1}\mapsto L^2$. From the embedding 
$\dot{H}^1\subset L^6$, $[x_j,\MP]$ is bounded 
$L^{6/5}\to L^2$. We deduce the following bound 
\begin{eqnarray*}
\int_{\R^d} [x_j,\MP] (u\cdot \n\MP u+\MP u\cdot \n \MQ u)
\cdot (x_j\MP u)dx&\lesssim& \|u\cdot \n\MP u+\MP u\cdot 
\n \MQ u\|_{L^{6/5}}\|x_j\MP u\|_{L^2}\\
&\lesssim& \|\MP u\|_{W^{k,6/5}}\big(\|\MP u\|_{W^{k,p}}
+\|\MQ u\|_{W^{k,p}}\big)\|x_j\MP u\|_{L^2}.
\end{eqnarray*}
Using an integration by parts
\begin{eqnarray*}
\int_{\R^d}\MP \big(u\cdot \n (x_j\MP u)\big)\cdot 
x_j\MP udx&=&
\int_{\R^d}\big(u\cdot \n (x_j\MP u)\big)\cdot 
([\MP, x_j]\MP u+x_j\MP u)dx\\
&=&\int_{\R^d}
-\frac{\text{div} \MQ u}{2}\big(|x_j\MP u|^2+
(x_j\MP u)[\MP,x_j]\MP u\big)\\
&&\hspace{3cm}-u\cdot \n ([x_j,\MP]\MP u)\cdot x_j\MP udx
\\
&\lesssim& \|\MQ u\|_{W^{k,p}}(\|x_j\MP u\|_{L^2}+
\|\MP u\|_{L^{6/5}})\|x_j\MP u\|_{L^2}\\
&& +(\|\MP u\|_{W^{k,p}}+\|\MQ u\|_{W^{k,p}})
\|\MP u\|_{L^2}\|x_j\MP u\|_{L^2}.
\end{eqnarray*}
Similarly 
\begin{eqnarray*}
\int_{\R^d}x_j\MP u\cdot \n \MQ u\cdot (x_j\MP u)dx
&\leq& \|\MQ u\|_{W^{k,p}}\|x_j\MP u\|_{L^2}^2,\\
\int_{\R^d}\MP ([x_j,u\cdot \n]\MP u)\cdot x_j\MP udx&=&
-\int_{\R^d}\MP (u_j\MP u)\cdot x_j\MP udx\\
&\lesssim &
\big(\|\MP u\|_{W^{k,p}}+\|\MQ u\|_{W^{k,p}}\big)
\|\MP u\|_{L^2}\|x_j\MP u\|_{L^2}.
\end{eqnarray*}
From these estimates we deduce 
\begin{equation*}
\frac{d}{dt}\|x_j\MP u\|_{L^2}^2\leq C 
\|x_j\MP u\|_{L^2}\big(\|\MQ u\|_{W^{k,p}}
+\|\MP u\|_{W^{k,p}\cap W^{k,q}}+\|x_j\MP u\|_{L^2}\big)^2,
\end{equation*}
which readily yields by integration in time and 
the bootstrap assumption \eqref{boot}
\begin{equation}\label{transportenergyd3}
\|x_j\MP u\|_{L^2}\leq C_1C^2(\delta t+\varepsilon)\delta\leq \frac{C\delta}{2}.
\end{equation}
\subsection{The dispersive estimates}\label{estdisp} We start from 
\eqref{equfinal} that reads $\partial_t\Psi=iH\psi
+ \mathcal{D}(\Psi)+\mathcal{T}(\Psi,\MP u)+R$,
with $\mathcal{D}$ the first two lines of nonlinear 
terms (quadratic dispersive terms), $\mathcal{T}$ the third line (dispersive-transport, and transport-transport) 
and $R$ cubic. Equivalently 
\begin{equation*}
\Psi (t)=e^{itH}\Psi_0+\int_0^t e^{i(t-s)H}\big(
\mathcal{D}(\Psi)+\mathcal{T}(\Psi,\MP u)+R\big)(s)ds
\end{equation*}
The linear part is not difficult to control: 
\begin{equation}\label{linweight}
\||x|e^{-itH}e^{itH}\Psi_0\|_{L^2}=\||x|\Psi_0\|_{L^2},
\end{equation}
\begin{equation}\label{lindecay}
\|e^{itH}\Psi_0\|_{W^{k,p}}\lesssim
\frac{\|\Psi_0\|_{H^N\cap W^{k,p'}}}{(1+t)^{3(1/2-1/p)}}.
\end{equation}
The terms in $\mathcal{D}$ and $\mathcal{T}$ are not estimated exactly similarly. Basically the control of $\mathcal{D}$ is quite difficult, but amounts to 
a straightforward modification
of the estimates in \cite{AudHasp2}, while 
$\mathcal{T}$ is new but 
a bit easier to control. For completeness, the key arguments 
to estimate $\mathcal{D}$ are provided in the appendix \ref{spacetime}.\\
The nonlinearity $\mathcal{T}$ contains four terms that are all very
similar. For conciseness we only detail how to estimate 
$U^{-1}\n (\MP u\cdot w_1)$, which contains all the difficulties of the 
other terms plus a singular factor $U^{-1}$.
Finally, $R$ contains cubic terms easier to control.
To fix ideas, we estimate the term 
$U^{-1}\n B[\MQ(u\cdot \n \MP u),\MQ u]$ that appears in the 
proof of lemma \ref{formenorm}).

\paragraph{Weighted bounds}
\subparagraph{Quadratic term}
We shall detail the estimate of $xe^{-itH}\int_0^te^{i(t-s)H}U^{-1}\n(\MP u\cdot\Psi)ds$.
Since $w_1=U(\Psi-\overline{\Psi})/2$, we have 
$$U^{-1}\n (\MP u\cdot w_1)
=U^{-1}\n \bigg(\MP u\cdot U\frac{\Psi-\overline{\Psi}}{2}\bigg),$$
so we define $\varphi=U\Psi$ and consider the term $U^{-1}\n (\MP u\cdot \varphi)$.
The weighted estimate amounts to control
\begin{equation*}
\bigg\|\n_\xi\bigg(\int_{[0,t]\times\R^d}i\xi  e^{-isH(\xi}
 U^{-1}(\xi)\widehat{\MP u}(\xi-\eta)
\cdot  \widehat{\varphi}(\eta)d\eta\bigg)ds\bigg\|_2,
\end{equation*}
therefore setting $m(\xi,s)=i\partial_{\xi_j}
(\xi U^{-1}(\xi)e^{-isH(\xi)})$
\begin{eqnarray*}
x_je^{-itH}\int_0^tU^{-1}\n (\MP u\cdot \varphi)ds&=&\mathcal{F}^{-1}\bigg(
\int_0^t\int_{\R^d} m(\xi,s)
 \widehat{\MP u}(\xi-\eta)
\cdot  \widehat{\varphi}(\eta)d\eta ds\bigg)\\
&&+\int_{0}^t e^{-isH}U^{-1}\n 
(x_j \MP u\cdot  \varphi\big)ds.
\end{eqnarray*}
We have $m=i\partial_{\xi_j}(\xi U^{-1})e^{-isH}
+\xi U^{-1}e^{-isH}s\partial_{\xi_j}H=m_1+m_2$. 
From elementary computations $m_1=m_3\sqrt{1+|\xi|^2}/|\xi|$ 
with $m_3$ a bounded multiplier, 
therefore it is continuous $W^{1,6/5}\to L^2$ and from Minkowski's inequality
\begin{equation}\label{weight1}
\bigg\|\int_0^tm_1(D)(\MP u \cdot \varphi)ds\bigg\|_{L^2}\lesssim
\int_0^t\|\MP u\cdot \varphi\|_{W^{1,6/5}}ds\lesssim 
\int_0^t\|\MP u\|_{W^{1,6/5}}\|\varphi\|_{W^{k,p}}ds,
\end{equation}
similarly $m_2\lesssim (1+|\xi|)^2s$ so 
\begin{equation}\label{weight2}
\bigg\|\int_0^tm_2(D)(\MP u \cdot \varphi)ds\bigg\|_{L^2}
\lesssim \int_0^t\|\MP u\cdot \varphi\|_{H^2}ds\lesssim 
\int_0^ts\|\MP u\|_{H^2}\|\varphi\|_{W^{k,p}}ds.
\end{equation}
Next we use a frequency truncation $\chi(D)$, with $\chi\in C_c^\infty,\ 
\chi\equiv 1$ near $0$, and split 
\begin{eqnarray*}
\int_{0}^t e^{-isH}U^{-1}\n  (x_j \MP u\cdot  \varphi\big)ds
=\int_{0}^t e^{-isH}(\chi +1-\chi)U^{-1}\n  (x_j \MP u\cdot  \varphi\big)ds.
\end{eqnarray*}
The low frequency part is estimated using the boundedness of $\chi U^{-1}\n:\
L^{6/5}\to L^2$
\begin{equation}\label{weight3}
\bigg\|\int_{0}^t e^{-isH}\chi U^{-1}\n  (x_j \MP u\cdot  \varphi\big)ds\bigg\|_2
\lesssim \int_0^t\|x_j\MP u\cdot \varphi\|_{6/5}ds\lesssim \int_0^t\|x_j\MP u\|_2 
\|\varphi\|_{3}ds.
\end{equation}
For the high frequency part, we use that $(1-\chi)U^{-1}$ is a bounded multiplier,
the identity 
$$\n (x_j\MP u\cdot \varphi)=(\MP u\cdot \varphi)e_j
+(\n \varphi)\cdot (x_j\MP u)+\n(\MP u)\cdot \big([x_j,Ue^{isH}]e^{-isH}\Psi
+Ue^{isH}(x_je^{-isH}\Psi)\big),$$
and the bound  $|[x_j,Ue^{isH}]|(\xi)\lesssim (1+s)(1+|\xi|)$, so
\begin{eqnarray}
\nonumber
\bigg\|\int_{0}^t e^{-isH}(1-\chi)U^{-1}\n  (x_j \MP u\cdot  \varphi\big)ds\bigg\|_2
&\lesssim&\int_0^t (\|\MP u\|_2+\|x_j\MP u\|_2)\|\varphi\|_{W^{k,p}}
\\
\nonumber
&&\hspace{1cm} +\|\MP u\|_{H^1}(1+s)\|\Psi\|_{W^{k,p}}
 \\ \label{weight4}
&&\hspace{15mm} +\|\MP u\|_{W^{k,p}}\|x_je^{-isH}\Psi\|_2ds.
\end{eqnarray}
From estimates \eqref{weight1},\eqref{weight2},\eqref{weight3},\eqref{weight4}
and the bootstrap assumptions \eqref{boot} we get for $c,\varepsilon$ 
small enough, $t\leq c/\delta$
\begin{eqnarray}
\nonumber
\bigg\|x_je^{-itH}\int_0^te^{i(t-s)H}U^{-1}\n (\MP u \cdot U\Psi)ds\bigg\|_2&\leq&
C^2C_1\int_0^t (1+s)\delta\bigg(\delta+\frac{\varepsilon}{(1+s)^{3(1/2-1/p)}}\bigg)
+\delta \varepsilon ds\\
\label{quadweight}
&\leq& C^2C_1(c^2+c\varepsilon).
\end{eqnarray}
\begin{rmq}\label{smallbizarre}
The weighted estimate is the only point in the proof where we need 
$c\lesssim \varepsilon$. More precisely, it is due to the commutator term 
$[x_j,e^{-isH}]=s(\n H)$ which causes a strong loss of decay in the estimate 
\eqref{weight2}.
\end{rmq}
\subparagraph{Cubic term}
From similar computations, we end up estimating 
terms like
\begin{eqnarray}
\label{cubic1}
\int_0^te^{-isH}U^{-1}\n B[\MQ (u\cdot 
\n \MP u), e^{isH}x_je^{-isH}\MQ u ]d\eta,\\
\int_0^t e^{-isH}s(\partial_j H)U^{-1}\n B[\MQ(u\cdot \n \MP u),
\MQ u]ds... \label{cubic2}
\end{eqnarray}
For the first one, since the symbol of $B$ 
is $(a'(1)-1)/(2(2+|\eta|^2+
|\xi-\eta|^2))$ ) we may use the boundedness 
of the bilinear multiplier $\n  B$ 
\begin{eqnarray}
\nonumber
\bigg\|
\int_0^te^{-isH}U^{-1}\n B[\MQ (u\cdot 
\n \MP u), e^{isH}x_je^{-isH}\MQ u ]d\eta\bigg\|_2
&\lesssim& \int_0^t\|u\cdot \n \MP u\|_\infty\|x_j e^{-isH}
\MQ u\|_2ds\\
\nonumber 
&\lesssim& \int_0^t\|u\|_{W^{k,p}}\|\MP u\|_{W^{k,p}} 
\|x_je^{-isH}\MQ u\|_2ds\\
\nonumber 
&\lesssim& \int_0^tC^3\bigg(\delta+
\frac{\varepsilon}{(1+s)^{3(1/2-1/p)}}\bigg)^2\varepsilon ds
\\
\label{cubic3}
&\lesssim& C^3(\delta^2t+\varepsilon^2)\varepsilon,
\end{eqnarray}
similarly for \eqref{cubic2} 
\begin{eqnarray}
\nonumber
\bigg\|\int_0^t e^{-isH}s(\partial_j H)U^{-1}\n B[\MQ(u\cdot \n \MP u),
\MQ u]ds\bigg\|_2&\lesssim&\int_0^t s\|u\|_{W^{k,p}}
\|\MQ u\|_{W^{k,p}}\|\MP u\|_{W^{k,p}}ds\\
\nonumber
&\lesssim& \int_0^t C^3s\bigg(\delta+\frac{\varepsilon}
{(1+s)^{3(1/2-1/p)}}\bigg)^3ds\\
\label{cubic4}
&\lesssim& C^3(t^2\delta^3+\varepsilon^3).
\end{eqnarray}
\eqref{cubic3} and \eqref{cubic4} are clearly more than 
enough to close the weighted estimate.
\subparagraph{Closing the bound}
The estimates \eqref{linweight} (linear), quadratic 
\eqref{quadweight} (quadratic, see also the appendix 
for the purely dispersive terms)  and \eqref{cubic3}
,\eqref{cubic4} (cubic) lead to 
\begin{equation}\label{yesweight}
 \|xe^{-itH}\Psi(t)\|_{L^2}\leq C_1\varepsilon+C^2C_1(c^2+c\varepsilon),
\end{equation}
which gives the first part of the dispersive estimate by choosing $C$ large enough, $c=\varepsilon$ small 
enough.
\paragraph{Bounds in $W^{k,p}$} 
The computations are done ``up to choosing $k$, $N$ 
larger''.
\subparagraph{Quadratic term}
As previously 
we focus on $U^{-1}\n (\MP u\cdot U\Psi):=U^{-1}\n (\MP u\cdot  \varphi)$.
We can assume $t\geq 2$, indeed for $t\leq 2$ 
by Sobolev's embedding and for $N$ large enough
\begin{eqnarray*}
\|\int_0^te^{i(-s)H}U^{-1}\n (\MP u\cdot \varphi)ds\|
_{W^{k,p}}&\lesssim& \int_0^t\|\n (\MP u\cdot 
\varphi)\|_{H^{N-1}}ds
 \\
&\lesssim &\|\MP u\|_{L^\infty([0,2],H^N)}
\|\varphi\|_{L^\infty([0,2],H^N)}
\leq C\varepsilon^2.
\end{eqnarray*} 
Since $p>6$, $3(1/2-1/p):=1+\gamma>1$. Minkowski's inequality and the dispersive estimate imply
\begin{eqnarray*}
\bigg\|\int_0^te^{i(t-s)H}U^{-1}\n (\MP u\cdot \varphi)ds\bigg\|_{W^{k,p}}
&\lesssim& \int_0^{t-1}\frac{\|\MP u\cdot \varphi\|_{W^{k+1,p'}}}{(t-s)^{1+\gamma}}ds
+\int_{t-1}^t\|\MP u\cdot \varphi\|_{H^{k+2}}ds\\
&\lesssim& \int_0^{t-1}\frac{\|\MP u\|_{W^{k,p'}}\| \varphi\|_{H^N}+
\|\MP u\|_{H^{k+1}}\|\varphi\|_{W^{k,q}}}{(t-s)^{1+\gamma}}ds\\
&& +\int_{t-1}^t\|\MP u\|_{W^{k,p}}\|\varphi\|_{H^N}+\|\MP u\|_{H^{N}}
\|\varphi\|_{W^{k,p}}ds,
\end{eqnarray*}
with $1/q+1/2=1/p'$. By interpolation, the bootstrap 
assumption gives with $\theta/p+(1-\theta)/2=1/q$
\begin{eqnarray*}
\|\MP u\|_{H^{k+1}}\leq C\varepsilon^{1/(N-k)}\delta^
{1-1/(N-k)},\  \|\varphi\|_{W^{k,q}}&\leq &
\|\varphi\|_{W^{k,p}}^\theta\|\varphi\|_{H^k}^{1-\theta}\\
&\leq& C\bigg(\delta+\frac{\varepsilon}{(1+s)^{1+\gamma}}
\bigg)^\theta \varepsilon^{1-\theta}.
\end{eqnarray*}
Note that $\theta=1/(p/2-1)$ with $p>6$ thus $\theta<1/2$. 
We choose $6<p<8$, so that $\theta>1/3$, from elementary computations, 
and for $t\geq 2$
\begin{eqnarray}
\nonumber
\bigg\|\int_0^te^{i(t-s)H}U^{-1}\n (\MP u\cdot \varphi)ds\bigg\|_{W^{k,p}}
&\leq& C_1 C^2\int_0^{t-1}\frac{\delta\varepsilon}{(t-s)^{1+\gamma}}+
\frac{\delta^{1-\frac{1}{N-k}+\theta}\varepsilon^{1-\theta+\frac{1}{N-k}}}
{(t-s)^{1+\gamma}}
\\
\nonumber
&&\hspace{3cm}+\frac{\delta^{1-\frac{1}{N-k}}\varepsilon^{1+\frac{1}{N-k}}}
{(t-s)^{1+\gamma}(1+s)^{\theta(1+\gamma)}}ds\\
\nonumber
&& +C_1C^2\bigg(\delta \varepsilon+\frac{\varepsilon^2}{(1+t)^{1+\gamma}}\bigg)
\\
&\leq &C_1C^2\bigg(\delta\varepsilon+\frac{\varepsilon^2}{(1+t^2)^{1+\gamma}}
+\frac{\delta^{1-\frac{1}{N-k}}\varepsilon^{1+\frac{1}{N-k}}}
{t^{\theta(1+\gamma)}}\bigg)\nonumber\\
\label{quaddecay}
&\leq& C_1C^2\bigg(\delta\varepsilon+\frac{\varepsilon^2}{(1+t^2)^{1+\gamma}}
+\delta\varepsilon
+\frac{\varepsilon^2}{t^{\frac{N-k}{3}}}\bigg).
\end{eqnarray}
\subparagraph{Cubic term} As for the quadratic terms,
we split the integral on $[0,t-1]\cup [t-1,t]$. The integral on $[t-1,t]$ is easily controlled, for the other 
part, using again the boundedness of $\n B$, and choosing 
$1/q=1/p'-1/2$
\begin{eqnarray*}
\bigg\|\int_0^{t-1}e^{i(t-s)H}U^{-1}\n B[\MQ (u\cdot \n \MP u),
\MQ u]ds\bigg\|_{W^{k,p}}
&\lesssim& \int_0^{t-1}\frac{\|u\cdot \n \MP u\|_{W^{k,q}}\|\MQ u\|_{H^N}}
{(t-s)^{1+\gamma}}ds\\
&\lesssim& \int_0^{t-1}\frac{\|u\|_{W^{k,p}}\|\MP u\|_{H^N}}{(t-s)^{1+\gamma}}
\|\MQ u\|_{H^N}ds\\
&\lesssim&C^3\varepsilon^2\bigg(\delta+
\frac{\varepsilon}{(1+t)^{1+\gamma}}\bigg).
\end{eqnarray*}
\subparagraph{Closing the bound}
From \eqref{lindecay}, \eqref{quaddecay} and the cubic estimates above we deduce 
\begin{equation}\label{yesdecay}
\|\Psi(t)\|_{W^{k,p}}\leq \frac{C_1\varepsilon}{(1+t)^{1+\gamma}}
+C^2C_1\varepsilon\bigg(\delta+\frac{\varepsilon}{(1+t)^{1+\gamma}}\bigg)
\leq \frac{C}{2}\bigg(\delta+\frac{\varepsilon}{(1+t)^{1+\gamma}}\bigg).
\end{equation}
\paragraph{End of proof}
As for theorem \ref{thd5}, we close the bootstrap 
argument thanks to the energy estimate \eqref{energyd3}, the transport energy estimates 
\eqref{transportenergyd3easy},\eqref{transportenergyd3},
and the dispersive estimates \eqref{yesweight},
\eqref{yesdecay}.

\section{An example of blow up}\label{secblowup}
We consider in this section the special case 
of quantum fluid, where $K$ is proportional to 
$1/\rho$. More precisely, if $\psi$ is a 
smooth solution of 
\begin{equation}\label{GP}
i\partial_t\psi+\Delta \psi=\frac{g(|\psi|^2)\psi}{2},
\end{equation}
that does not cancel, the so-called Madelung transform 
$\psi=\sqrt{\rho}e^{i\phi}$, $u=\n \phi$ is well
defined and $(\rho,u)$ satisfy 
\begin{equation}\label{fluidQ}
\left\{
\begin{array}{lll}
 \partial_t\rho +\text{div}(\rho u)&=&0,\\
 \partial_tu+u\cdot \n u+\n g(\rho)&=&2\n\bigg(
 \frac{\Delta \sqrt{\rho}}{\sqrt{\rho}}\bigg).
\end{array}
\right.
\end{equation}
As pointed out in the review article 
\cite{CDS}, the Madelung transform is a major tool
to study nonlinear Schr\"odinger equations with 
non zero boundary conditions at infinity, with 
a (technical but important) drawback that it 
becomes singular in presence of vacuum, that is 
when $\rho$ vanishes. Cancellation of $\psi$ is often
labelled as vortex formation in the framework of 
NLS. We construct here in dimension one an example 
of solution such that vacuum appears in finite time.
\begin{prop}
Let $\psi_0$ real valued such that 
$$
1-\psi_0\in \mathcal{S}(\R^d),\ \psi_0>0\text{ on }\R^d\setminus \{0\},\ \psi_0(0)=0,\ \Delta\psi_0(0)
\neq 0.
$$
Then there exists a local solution to \eqref{GP} 
with $\psi|_{t=0}=\psi_0$, and $T>0$ such that 
$\psi(x,t)>0$ on $]0,T]\times \R^d$.\\
Consequently, there exists a solution to 
\eqref{fluidQ} that blows up in finite time.
\end{prop}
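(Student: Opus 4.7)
The plan is to run \eqref{GP} forward from the real-valued datum $\psi_0$, verify that the resulting solution is non-vanishing for all small $t>0$, apply the Madelung transform on that time interval to obtain a smooth solution of \eqref{fluidQ} with $\rho>0$, and finally use the time-reversibility of \eqref{fluidQ} to produce a solution which starts from smooth positive data and develops vacuum in finite time.

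Since $1-\psi_0\in\mathcal{S}(\R^d)$, we have $\psi_0-1\in H^s$ for every $s$, and standard Cauchy theory for the Gross--Pitaevskii equation (in the $1+H^s$ framework \`a la Zhidkov/Gallo--G\'erard) yields a unique smooth local solution $\psi$ of \eqref{GP} on some $[0,T_0]$. The key point is the non-vanishing for $t>0$. Since $\psi_0$ is real, $\mathrm{Im}\,\psi(\cdot,0)\equiv 0$, so Taylor's theorem gives
\begin{equation*}
\mathrm{Im}\,\psi(x,t)=t\,F(x,t),\qquad F(x,t):=\int_0^1 \partial_t\mathrm{Im}\,\psi(x,st)\,ds.
\end{equation*}
Evaluating \eqref{GP} at $t=0$ with $\psi$ real yields $\partial_t\mathrm{Im}\,\psi(x,0)=\Delta\psi_0(x)-\tfrac{1}{2}g(\psi_0^2)\psi_0(x)$, and since $\psi_0(0)=0$ this gives $F(0,0)=\Delta\psi_0(0)\neq 0$. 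Smoothness of $\psi$ and continuity of $F$ furnish $\delta>0$ and $T_1\in(0,T_0]$ with $|F|\geq|\Delta\psi_0(0)|/2$ on $\overline{B(0,\delta)}\times[0,T_1]$, so $\mathrm{Im}\,\psi(x,t)\neq 0$ on $\overline{B(0,\delta)}\times(0,T_1]$. Away from the origin, the continuous function $\psi_0$ is bounded below by some $c_\delta>0$ on $\R^d\setminus B(0,\delta)$ (being positive there and tending to $1$ at infinity), so continuity of $\psi$ gives $\mathrm{Re}\,\psi(x,t)\geq c_\delta/2$ for $|x|\geq\delta$ and $t\leq T$, shrinking $T\in(0,T_1]$ if needed. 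The two regimes combine to give $\psi(x,t)\neq 0$ on $\R^d\times(0,T]$.

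On $(0,T]$ the formulas $\rho=|\psi|^2$, $u=\mathrm{Im}(\overline{\psi}\nabla\psi)/|\psi|^2$ then define a smooth solution of \eqref{fluidQ} with $\rho>0$. Fix $t_1\in(0,T]$ and set $\tilde\rho(x,t):=\rho(x,t_1-t)$, $\tilde u(x,t):=-u(x,t_1-t)$. The symmetry $(t,u)\mapsto(-t,-u)$ leaves \eqref{fluidQ} invariant, so $(\tilde\rho,\tilde u)$ is a smooth solution of \eqref{fluidQ} on $[0,t_1)$ with smooth initial data $(\rho(\cdot,t_1),-u(\cdot,t_1))$ and $\tilde\rho(\cdot,0)>0$. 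As $t\to t_1^-$, continuity of $\psi$ at $t=0^+$ yields $\tilde\rho(0,t)=|\psi(0,t_1-t)|^2\to\psi_0(0)^2=0$; hence $\inf_{\R^d\times[0,t_1]}\tilde\rho=0$, and the continuation criterion of Theorem~\ref{locWP} forbids extending the solution past $t_1$ as an Euler--Korteweg solution. The solution therefore blows up in finite time.

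The main obstacle is the non-vanishing step: a priori, the zero set of $\mathrm{Im}\,\psi$ has codimension $1$ in spacetime and could accumulate at $(0,0)$ from positive times, which would prevent the Madelung transform from being applied on any interval $(0,T]$. The hypothesis $\Delta\psi_0(0)\neq 0$ is precisely what makes the zero of $\mathrm{Im}\,\psi$ non-degenerate in the time direction at $(0,0)$, so that the factorization $\mathrm{Im}\,\psi=tF$ with $F(0,0)\neq 0$ resolves the degeneracy and isolates the vortex at the single time $t=0$.
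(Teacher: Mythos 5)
Your proof is correct and reaches the same conclusion, but the central step — showing that $\psi$ cannot vanish for $t\in(0,T]$ — is handled by a genuinely different argument, and it is worth comparing the two. The paper works with the real scalar $|\psi|^2$: since $\psi_0$ is real, $\partial_t|\psi|^2(\cdot,0)\equiv 0$, so the first-order term is useless and one must push to the second derivative, computing $\partial_t^2|\psi|^2(0,0)=2|\Delta\psi_0(0)|^2>0$ and then deducing $|\psi(x,t)|^2\geq\alpha t^2/2$ near the origin by a second-order Taylor expansion. You instead look directly at $\mathrm{Im}\,\psi$, which also vanishes identically at $t=0$ but whose \emph{first} time derivative at $(0,0)$ is exactly $\Delta\psi_0(0)\neq 0$; the factorization $\mathrm{Im}\,\psi=tF$ with $F(0,0)\neq 0$ then gives the non-vanishing of $\psi$ near the origin after one order of Taylor expansion rather than two, which is slightly more economical. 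Away from the origin both proofs invoke $\psi_0\geq c_\delta>0$ (positivity on compacts plus $\psi_0\to 1$ at infinity) together with continuity in time, which you spell out a bit more carefully than the paper does. From that point on the two arguments coincide: apply the inverse Madelung transform on $(0,T]$, fix $t_1\in(0,T]$, and use time reversal $(\rho,u)(t)\mapsto(\rho,-u)(t_1-t)$ to produce a smooth EK solution starting from positive density that forms vacuum at $(0,t_1)$.

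One small remark on the very last step. You conclude blowup from the fact that $\inf\tilde\rho\to 0$ "forbids extending" via the continuation criterion of Theorem~\ref{locWP}. That criterion, as a sufficient condition for continuation, does not by itself yield a positive blowup statement. The paper closes this gap by a short characteristics argument: from $\frac{d}{dt}\rho(t,X(t))=-\rho\,\mathrm{div}\,u$ one gets $\rho(t,X(t))=\rho_0(X(0))\exp(-\int_0^t\mathrm{div}\,u\,ds)$, so vacuum formation forces $\|u\|_{L^1_{t_1}W^{1,\infty}}=\infty$. This identifies a concrete norm that blows up and is the cleaner way to finish; you may want to add it, though the overall proposal is sound and the non-vanishing argument via $\mathrm{Im}\,\psi$ is a nice simplification.
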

\begin{proof}
Since $1-\psi_0$ is smooth, the existence of 
a smooth solution to \eqref{GP} is a consequence of 
the standard theory for NLS equations. From direct 
computations 
\begin{eqnarray}
\partial_t|\psi|^2&=&-2\text{Im}(\overline{\psi}
\Delta \psi),\\
\partial_t^2|\psi|^2&=&2|\Delta \psi|^2-\text{Re}\big(
g\overline{\psi}\Delta \psi+\overline{\psi}
\Delta^2\psi-\overline{\psi}\Delta(g\psi)
\big).
\end{eqnarray}
Since $\psi_0$ is real valued, we deduce
\begin{equation}
\forall\,x\in \R^d,\ \partial_t|\psi(x,0)|^2=0,\\
\partial_t^2|\psi|^2(0,0)=2|\Delta \psi_0(0)|^2>0.
\end{equation}
By continuity, there exists $\alpha>0$ such that 
$\partial_t^2|\psi(x,t)|^2\geq \alpha$ on a neighbourhood $U$ of 
$(x,t)=(0,0)$, we deduce by Taylor expansion 
\begin{equation*}
\forall\,(x,t)\in U,\ 
|\psi(x,t)|^2=|\psi(x,0)|^2+\int_0^t(t-s)\partial_t^2|\psi(x,s)|^2ds\geq \alpha t^2/2.
\end{equation*}
Now by continuity, for $(x,t)\in U^c$, $t$ small enough, $\psi(x,t)$ does not vanish 
hence for $t$ small enough, $\psi(\cdot,t)>0$ on $\R^d$. Thanks to the reversibility\footnote{The map $\psi(t)\to :
\overline{\psi}(-t)$ leaves the solution set invariant,
or equivalently $(\rho,u)(t)\to (\rho,-u)(-t)$}
of the equations, starting with initial data $\psi(\cdot,t)$ and going backwards 
in time provides a solution of \eqref{GP} that cancels at $x=0$ in a finite time
$T^*$. 
The (inverse)Madelung transform $\psi\to (\rho,u)=\big(|\psi|^2,\ 
\text{Im}(\frac{\overline{\psi}\n \psi}{|\psi|^2})\big)$ then gives a smooth solution of 
\eqref{fluidQ} initially without vacuum, but with formation of vacuum at 
$x=0, t=T^*$. This implies blow up of $u$ according to 
the method of characteristics: define $X(t)$ as the flow 
associated to $u$, $X'(t)=u(t,X(t))$, we have 
\begin{equation*}
\frac{d}{dt}\rho(t,X(t))=-\rho \text{div}u,
\end{equation*}
hence $\rho(t,X(t))=\rho_0(X(0))e^{-\int_0^t\text{div} uds}$, 
the cancellation of 
$\rho$ implies $\|u\|_{L^1_{T^*}W^{1,\infty}}=\infty$.
\end{proof}
\begin{rmq}
 The blow up is not linked to vorticity, indeed the 
 initial data $\psi_0$ is real positive, thus its 
 index is zero.
\end{rmq}

\appendix  
\section{The total energy estimate}\label{proofenergy}
This section is devoted to the proof of proposition 
\ref{totenergy}. 
This is essentially a variation on the estimates in \cite{Benzoni1}, that we include here 
for self-containedness.\\
We define $z=u+iw$ so that according to \eqref{EEK}, $(\rho,z)$ satisfy
\begin{equation}
\left\{
\begin{array}{lll}
\partial_t\rho+\text{div}(\rho u)&=&0,\\
\partial_tz+u\cdot \n z+i\n z\cdot w+i\n (a\text{div}z)+\n(g(\rho))&=&0.
\end{array}\right.
\end{equation}
A direct energy method where one takes the scalar product of the 
second equation with $z$ and integrate causes loss of derivatives 
due to the term $i\n z\cdot w$. The remedy is 
done in two times : first use a gauge $\varphi_n(\rho)$
and derive an energy bound 
$\frac{d}{dt}\int |\MQ(\varphi_n\Delta^nz)|^2dx$ for $n\in \N$, this estimate contains a loss of derivatives,
but an other gauge estimate on $\MP (\phi_n\Delta^nz)$
for an appropriate choice of $\phi_n$ compensates 
exactly the loss.
\\
In what follows, $R$ stands for a nonlinear term (quadratic of higher)
that contains only derivatives of $z$ of order at 
most $2n$, and is thus without loss of derivatives, $I_R$ is an integrated term
which is dominated by $\|(r,u)\|_{\mathcal{W}^{1,\infty}}
\|(r,u)\|_{\mathcal{H}^{2n}}^2$.\\
We will need the following lemma :
\begin{lemma}[\cite{BDD}, lemma 3.1]\label{magiecurl}
For $Z\in C^1(\R^d,\C),\ W\in C^1(\R^d,\R^d)$, with limit $0$ at infinity,
\begin{equation*}
 2i\textrm{Im}\int_{\R^d}Z^*\cdot\n_0 Z\cdot Wdx=\int Z^*
 \cdot\textrm{curl}W\cdot Zdx.
\end{equation*}
In particular, if $W$ is a gradient, the integral is $0$.
\end{lemma}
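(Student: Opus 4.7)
The plan is to prove the identity by direct computation using integration by parts. By density, we may assume $Z$ and $W$ are Schwartz, which justifies discarding boundary terms. The cleanest route is to start from the right-hand side and work backwards. Expanding in coordinates,
\[
\int_{\R^d} Z^*\cdot\textrm{curl}(W)\cdot Z\,dx = \int_{\R^d}\sum_{i,j}\overline{Z_i}\,(\partial_iW_j-\partial_jW_i)\,Z_j\,dx ,
\]
where the antisymmetric matrix $\textrm{curl}(W)_{ij}=\partial_iW_j-\partial_jW_i$ is contracted against the sesquilinear tensor $\overline{Z_i}Z_j$.

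For each of the two summands I would integrate by parts to move the derivative off $W$ onto the product $\overline{Z_i}Z_j$, then apply the product rule. This produces four pieces: two involving $(\partial\overline{Z_i})\,Z_j\,W_\bullet$, two involving $\overline{Z_i}(\partial Z_j)\,W_\bullet$, together with contributions where the derivative collapses onto $\textrm{div}\,Z$ or its conjugate. Relabelling the dummy indices $i\leftrightarrow j$ in half of these terms and using that $W$ is real, so that complex conjugation simply exchanges $Z$ and $\overline{Z}$, the pieces pair into two conjugate couples of the form $I-\overline{I}=2i\,\textrm{Im}\,I$. The whole expression then collapses to $2i$ times an imaginary part, which one identifies with $\int Z^*\cdot\n_0Z\cdot W\,dx$ in the notation of \cite{BDD}.

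The second assertion is then immediate: if $W=\n f$, equality of mixed partials gives $(\textrm{curl}\,W)_{ij}=\partial_i\partial_jf-\partial_j\partial_if=0$, so the right-hand side vanishes identically.

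The main obstacle is pure index bookkeeping: one must verify that, after IBP and relabelling, the ``divergence-type'' terms (those where a derivative of $W$ combines with $\textrm{div}\,Z$ or $\textrm{div}\,\overline{Z}$) assemble into exactly the expression encoded by $\n_0$ in the convention of \cite{BDD}, with no leftover pieces. Beyond this algebraic step the argument is routine, and ultimately reflects the general fact that, for a real $W$, only the antisymmetric part of $\n W$ --- the curl --- can contribute to the imaginary part of a sesquilinear form $\int Z^*\cdot(\bullet)\,Z\,dx$ with complex $Z$.
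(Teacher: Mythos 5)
Your plan is sound and is the natural (indeed the only sensible) way to prove the identity. Note, however, that the paper does not prove Lemma~\ref{magiecurl} at all: it is cited from \cite{BDD} and used once as a black box in Appendix~\ref{proofenergy} to kill a nonresonant term in the $H^n$ energy estimate, so there is no in-paper argument to compare against. The mechanism you describe --- integrating by parts to transfer derivatives off the real field $W$, relabelling dummy indices, and using that the surviving terms pair as $I-\overline{I}=2i\,\textrm{Im}\,I$ --- is exactly right, and your closing remark is the correct moral: only the antisymmetric part of $\nabla W$, namely $\textrm{curl}\,W$, can survive the imaginary-part projection of the sesquilinear form $\int Z^*(\,\cdot\,)Z\,dx$.

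That said, you explicitly leave the ``index bookkeeping'' to the reader, and this is precisely the step that produces the $\nabla_0$ correction and is therefore worth writing out. With the paper's conventions, $(\nabla Z)_{ij}=\partial_iZ_j$, $\nabla_0=\nabla-I_d\,\textrm{div}$, $(\textrm{curl}\,W)_{ij}=\partial_iW_j-\partial_jW_i$, and $Z^*\cdot M\cdot Z=\sum_{i,j}\overline{Z_i}M_{ij}Z_j$, the two integrations by parts followed by the relabelling $i\leftrightarrow j$ in the second block give
\begin{equation*}
\int Z^*\cdot\textrm{curl}\,W\cdot Z\,dx
= 2i\,\textrm{Im}\int\big[(\textrm{div}\,Z)(\overline{Z}\cdot W)-Z^*\cdot\nabla Z\cdot W\big]\,dx
= -\,2i\,\textrm{Im}\int Z^*\cdot\nabla_0 Z\cdot W\,dx.
\end{equation*}
So, with the conventions as written in this paper, the identity holds up to an overall sign relative to the statement. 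This is immaterial for the application (the one use is with $W=\nabla F(\rho)$ a gradient, where $\textrm{curl}\,W\equiv0$ and both sides vanish, which is the second assertion you correctly proved), but it is exactly the kind of discrepancy the bookkeeping step you postpone would need to resolve, so it is worth not skipping.
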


\paragraph{Equation on $\varphi_n(\rho)\Delta^nz$}
We recall $a=\sqrt{\rho K}$, $w=\sqrt{K/\rho}\n \rho$, and start from
\begin{equation*}
\partial_tz+u\cdot \nabla z+i\nabla z\cdot w
+i\nabla(a\text{div}z)+g'w=0.
\end{equation*}
Apply  $\varphi_n\Delta^n$ together with  the 
commutator identity
$$\n(\Delta^n(a\text{div}z))=\n(a\text{div}\Delta^nz)
+\n\big((2n\nabla a)\cdot \nabla \text{div}\Delta^{n-1}z\big)+R,$$ 
and 
$\varphi_n\nabla(a\text{div}\Delta^nz)=\nabla (a\text{div}
(\varphi_n\Delta^nz))-a\nabla \varphi_n\text{div}\Delta^nz-a\nabla 
\Delta^nz\cdot \nabla\varphi_n+R$,
\begin{eqnarray*}
\partial_t(\varphi_n\Delta^nz)+u\cdot \nabla 
(\varphi_n\Delta^nz)+i\varphi_n\nabla(\Delta^nz)\cdot w
+i\nabla (a\text{div}(\varphi_n\Delta^nz))
+g'\varphi_n\Delta^n w\\
+2in\varphi_n\nabla (\nabla a\cdot \nabla \text{div}
\Delta^{n-1}z)-ia(\nabla \varphi_n)\text{div}\Delta^nz
-ia\nabla \Delta^nz\cdot \nabla \varphi_n=R,
\end{eqnarray*}
so using $\text{div}=\text{div}\circ \MQ$ and 
$\nabla \text{div}\Delta^{n-1}z
=\Delta^n \mathbb{Q}z$
\begin{eqnarray}\nonumber
 \partial_t(\varphi_n\Delta^nz)+u\cdot \nabla 
(\varphi_n\Delta^nz)
+i\nabla (a\text{div}(\varphi_n\Delta^nz))
+g'\varphi_n\Delta^n w&&\\
\label{start}
+i\varphi_n \nabla(\Delta^nz)\cdot w
+2in\varphi_n\nabla (\Delta^{n}\MQ z)\cdot \nabla a
-ia(\nabla \varphi_n)\text{div}\Delta^n\MQ z
-ia\nabla \Delta^nz\cdot \nabla \varphi_n&=&R.
\end{eqnarray}
The loss of derivative is caused by the left hand side 
of the second line. 
For $\varphi_n=a^n \sqrt{\rho}$, 
and denoting $\n_0:=\n -I_d\text{div}$, we find
\begin{eqnarray}\nonumber
a^n\sqrt{\rho}\nabla (\Delta^nz)\cdot w+2na^n\sqrt{\rho}
\nabla (\Delta^n\MQ z)\cdot \nabla a-a\nabla(a^n\sqrt{\rho})
\text{div}\Delta^n\MQ z-a\nabla \Delta^nz\cdot \nabla 
(a^n\sqrt{\rho})\\
\nonumber
=\frac{a^{n+1}}{\sqrt{\rho}}\nabla (\Delta^nz)\cdot 
\nabla \rho
+2na^na'\sqrt{\rho}
\nabla (\Delta^n\MQ z)\cdot \nabla \rho
-\bigg(na^na'\sqrt{\rho}+\frac{a^{n+1}}{2\sqrt{\rho}}\bigg)
\text{div}(\Delta^n\MQ z)\nabla \rho\\
\nonumber
-\bigg(na^na'\sqrt{\rho}+\frac{a^{n+1}}{2\sqrt{\rho}}\bigg)
\nabla \Delta^nz\cdot \nabla 
\rho
\\
\label{loss1}
=\bigg(na^n\sqrt{\rho}a'+\frac{a^{n+1}}{2\sqrt{\rho}}\bigg)
\nabla_0 \Delta^n\MQ z
\cdot\nabla \rho
+\bigg(-na^n\sqrt{\rho}a'+\frac{a^{n+1}}{2\sqrt{\rho}}\bigg)
\nabla \Delta^n\MP z\cdot \nabla \rho.
\end{eqnarray}
We write $na^n\sqrt{\rho}a'+\frac{a^{n+1}}{2\sqrt{\rho}}
=\varphi_n(na'+a/(2\rho))$, commute $\varphi_n$ with $\n_0$, 
then we use that for $n\geq 1$, $\MQ \Delta^n$ is a differential operator of order $2n$,
so $\varphi_n\MQ \Delta^n\cdot=\MQ \Delta^n(\varphi_n\cdot)+[\varphi_n,\MQ \Delta^n]\cdot
=\MQ \big(\varphi_n\Delta^n\cdot)+P\cdot $ with $P$ a differential operator of order
$2n-1$, therefore 
\begin{equation}\label{commute}
\bigg(na^n\sqrt{\rho}a'+\frac{a^{n+1}}{2\sqrt{\rho}}\bigg)
\nabla_0 \Delta^n\MQ z
\cdot\nabla \rho=\bigg(na'+\frac{a}{2\rho}\bigg)
\nabla_0 \MQ (\varphi_n\Delta^nz)
\cdot\nabla \rho+R.
\end{equation}
Plugging \eqref{loss1} and \eqref{commute} in \eqref{start} we get
\begin{eqnarray}\label{reform}
\partial_t (\varphi_n\Delta^nz)&+&u\cdot \nabla (\varphi_n\Delta^nz)
+g'\varphi_n\Delta^nw+i\nabla (a\text{div}(\varphi_n\Delta^nz))\\
&=&\nonumber
-i\bigg(na'+\frac{a}{2\rho}\bigg)
\nabla_0 \MQ (\varphi_n\Delta^nz)
\cdot\nabla \rho
-i\bigg(-na^n\sqrt{\rho}a'+\frac{a^{n+1}}{2\sqrt{\rho}}\bigg)
\nabla \Delta^n\MP z\cdot \nabla \rho+R.
\end{eqnarray}
Note that $g'\varphi_n\Delta^nw =g'(1)\varphi_n(1)\Delta^nw+R=2\Delta^nw+R$ is without loss of 
derivatives but contains a linear term that can not 
be neglected for long time dynamics. 
\paragraph{Energy estimate for $\MQ \varphi_n\Delta^nz$} Take the 
scalar product of \eqref{reform} with $\MQ \varphi_n\Delta^nz$, integrate 
in space, and use lemma \ref{magiecurl}
\begin{eqnarray}\nonumber
\frac{d}{2dt}\int|\MQ\varphi_n\Delta^nz|^2dx
+\text{Re}\int \big(u\cdot \nabla (\varphi_n\Delta^nz)
+2\Delta^nw\big)\cdot\overline{\MQ\varphi_n\Delta^nz}
dx
\\
\nonumber
=\text{Im}\int \overline{\MQ\varphi_n\Delta^nz}\cdot\bigg[
\bigg(na'+\frac{a}{2\rho}\bigg)\nabla_0 \MQ (\varphi_n\Delta^nz)
+ \bigg(-na^n\sqrt{\rho}a'+\frac{a^{n+1}}{2\sqrt{\rho}}\bigg)
\nabla \Delta^n\MP z\bigg]\cdot\nabla \rho dx+I_R
\\
=\text{Im}\int \overline{\MQ\varphi_n\Delta^nz}\cdot\bigg(-na^n\sqrt{\rho}a'
+\frac{a^{n+1}}{2\sqrt{\rho}}\bigg)\nabla \Delta^n\MP z\cdot\nabla \rho dx
+I_R,
\end{eqnarray}
where $I_R=\int R\cdot \MQ(\varphi_n\Delta^nz)dx$, 
for more details on the generic estimate $I_R$ we refer
to \cite{AudHasp2}.\\
The right hand side is an unavoidable loss of derivative, the second term 
on the left hand side rewrites with the convention of summation on repeated 
indices, and using $\partial_j(\MQ v)_i=\partial_i(\MQ v)_j$
\begin{eqnarray*}
 \int u_j\partial_j(\varphi_n\Delta^nz_i)\overline{(\MQ\varphi_n\Delta^nz)}_i)dx&=&
-\int \text{div}(u)\varphi_n\Delta^nz\cdot \overline{\MQ \varphi_n\Delta^nz}
+u_j\varphi_n\Delta^nz_i\partial_j(\overline{\MQ\varphi_n\Delta^nz})_idx\\
&=&-\int \text{div}(u)\varphi_n\Delta^nz\cdot \overline{\MQ \varphi_n\Delta^nz}
+\frac{\text{div}u}{2}|\MQ\varphi_n\Delta^nz|^2
\\
&&\hspace{35mm}+u_j(\MP\varphi_n\Delta^nz)_i
\partial_i\overline{(\MQ\varphi_n\Delta^nz)}_jdx\\
&=&-\int \text{div}(u)\varphi_n\Delta^nz\cdot \overline{\MQ \varphi_n\Delta^nz}
+\frac{\text{div}u}{2}|\MQ\varphi_n\Delta^nz|^2
\\
&&\hspace{35mm}+(\MP\varphi_n\Delta^nz)\cdot \nabla u\cdot \overline{\MQ\varphi_n\Delta^nz}dx\\
&=I_R.
\end{eqnarray*}
To summarize 
\begin{eqnarray}
\frac{d}{2dt}\int|\MQ\varphi_n\Delta^nz|^2dx
&+&\text{Re}\int 2\Delta^nw
\cdot\overline{\MQ\varphi_n\Delta^nz}dx\\
\label{lossQ}
&=&\text{Im}\int \overline{\MQ\varphi_n\Delta^nz}\cdot\bigg(-na^n\sqrt{\rho}a'
+\frac{a^{n+1}}{2\sqrt{\rho}}\bigg)\nabla \Delta^n\MP z\cdot\nabla \rho dx
+I_R.
\end{eqnarray}
\paragraph{Energy estimate for $\MP (\phi_n\Delta^nz)$ and compensated 
loss} Let $\phi_n(\rho)$ be a second gauge. 
Following the same computations that led to \eqref{start}
\begin{eqnarray*}
 \partial_t(\phi_n\Delta^nz)+u\cdot \nabla 
(\phi_n\Delta^nz)+i\nabla(\phi_n\Delta^nz)\cdot w
+i\nabla (a\text{div}(\phi_n\Delta^nz))
+2g'\phi_n\Delta^n w\\
+2in\phi_n\nabla (\Delta^{n}\MQ z)\cdot \nabla a
-ia(\nabla \phi_n)\text{div}\Delta^n\MQ z
-ia\nabla \Delta^nz\cdot \nabla \phi_n=R.
\end{eqnarray*}
We take the scalar product with $\MP \phi_n\Delta^nz$ and integrate in space, the first two terms are
\begin{eqnarray*}
\text{Re}\int \big(\partial_t(\phi_n\Delta^nz)&+&u\cdot 
\nabla (\phi_n\Delta^nz)\big)\cdot 
\overline{(\MP \phi_n\Delta^nz)}
\\
&=&\frac{1}{2}\frac{d}{dt}
\int |\MP\phi_n\Delta^nz|^2dx
-\text{Re}\int \frac{\text{div}u}{2}
|\MP\phi_n\Delta^nz|^2-\MQ\phi_n\Delta^nz\cdot \nabla u\cdot
\MP\phi_n\Delta^nz\,dx\\
&=&\frac{1}{2}\frac{d}{dt}
\int |\MP\phi_n\Delta^nz|^2dx+I_R.
\end{eqnarray*}
Most of the other terms are actually neglectible
\begin{equation*}
\int \overline{(\MP\phi_n\Delta^nz)}_i
\partial_i(\phi_n\Delta^nz_j) w_jdx=-\int 
\overline{(\MP\varphi_n\Delta^nz)}_i\varphi_n
\Delta^nz_j \partial_iw_jdx=I_R,
\end{equation*}
and  from the same computation
$\int \overline{\MP\phi_n\Delta^nz}\cdot 
(2in\phi_n\nabla (\Delta^{n}\MQ z)
\cdot \nabla a-ia\nabla \Delta^nz\cdot \nabla \phi_n)
dx=I_R$. \\
We are only left with 
\begin{equation*}
\text{Im}\int \text{div}(\Delta^n\MQ z)a\nabla \phi_n
\cdot \overline{\MP\phi_n\Delta^nz}dx=
-\text{Im}\int a(\MQ \Delta^nz)\cdot 
\nabla\overline{(\MP\phi_n\Delta^nz)}\cdot \nabla \phi_n dx+I_R,
\end{equation*}
therefore 
\begin{equation}\label{lossP}
\frac{1}{2}\frac{d}{dt}\int |\MP\phi_n\Delta^nz|^2dx
=\int a(\MQ \Delta^nz)
\nabla\overline{(\MP\Delta^nz)}\cdot (\phi_n\nabla \phi_n) dx+I_R.
\end{equation}
Sum \eqref{lossQ} and \eqref{lossP}
\begin{eqnarray}\nonumber
\frac{1}{2}\frac{d}{dt}\int |\MQ\varphi_n\Delta^nz|^2+ 
|\MP\phi_n\Delta^nz|^2dx
&+&\text{Re}\int 2\Delta^nw
\cdot\overline{\MQ\varphi_n\Delta^nz}dx\\
\nonumber
&=& \text{Im}\int \varphi_n\bigg(-na^n\sqrt{\rho}a'
+\frac{a^{n+1}}{2\sqrt{\rho}}\bigg)
\overline{\MQ\Delta^nz}\cdot
\nabla \MP \Delta^nz\cdot\nabla \rho dx
\\
\label{noloss}
&&+\text{Im}\int a\phi_n\phi_n'(\MQ \Delta^nz)\cdot 
\nabla\overline{(\MP \Delta^nz)}\cdot 
\nabla \rho dx+I_R.
\end{eqnarray}
It is now apparent that the right choice 
for $\phi_n$ is a function such that
\begin{equation*}
a\phi_n\phi_n'=\varphi_n\bigg(-na^n\sqrt{\rho}a'+
\frac{a^{n+1}}{2\sqrt{\rho}}\bigg)\Leftrightarrow 
(\phi_n^2)'=-2na^{2n-1}\rho a'+a^{2n},
\end{equation*}
and which is positive close to $\rho=1$, of course 
there exists such functions.
\paragraph{Correction of the linear drift}
There only remains to cancel the ``linear'' term 
$$
\text{Re}\int 2\Delta^nw
\cdot\overline{\MQ\varphi_n\Delta^nz}dx
=\int 2\Delta^n\n \rho \cdot \MQ \Delta^nu dx+I_R.
$$
We apply $\Delta^n$ to the mass conservation equation,
multiply by $\Delta^n\rho$
and integrate, 
\begin{eqnarray}\nonumber
\int \Delta^{n}\big(\partial_t\rho+\text{div}(\rho u)
\big) \Delta^n\rho dx
&=&\frac{1}{2}\frac{d}{dt}\int 
(\Delta^n\rho)^2dx+\int \rho\text{div}(\Delta^nu)
\Delta^n\rho dx\\
\label{massdrift}
&=& \frac{1}{2}\frac{d}{dt}\int 
(\Delta^n\rho)^2dx-\int \MQ(\Delta^nu)
\n \Delta^n\rho dx.
\end{eqnarray}
Therefore adding \eqref{noloss} to two times 
\eqref{massdrift} we obtain
\begin{equation}\label{energyfinal}
\frac{1}{2}\frac{d}{dt}\int |\MQ\varphi_n\Delta^nz|^2+ 
|\MP\phi_n\Delta^nz|^2
+2|\Delta^n\rho|^2dx=I_R.
\end{equation}
\paragraph{Conclusion} By integration of 
\eqref{energyfinal} we find
\begin{eqnarray}\label{intenergy}
\|\MQ\varphi_n\Delta^nz\|_{L^2}^2+\|\MP \phi_n
\Delta^nz\|_{L^2}^2+2\|\Delta^nr\|_{L^2}^2\lesssim \int_0^t\|z\|_{
\mathcal{H}^{2n}}^2\|z\|_{\mathcal{W}^{1,\infty}}ds.
\end{eqnarray}
Note that for $n=0$, we have $\varphi_0=\phi_0=
\sqrt{\rho}$, therefore 
\begin{equation*}
\|z\|_{\mathcal{H}^0}\sim 
\|\MQ\varphi_0\Delta^0z\|_{L^2}^2+\|\MP \phi_0
\Delta^0z\|_{L^2}^2+2\|r\|_{L^2}^2\lesssim \int_0^t
\|z\|_{\mathcal{H}^0}^2\|z\|_{\mathcal{W}^{1,\infty}}ds,
\end{equation*}
(the estimate is actually a conservation of energy, 
see \cite{Benzoni1} paragraph $3.1$).\\
Moreover for $n\geq 1$, $\MQ \varphi_n\Delta^nz=\MQ[\varphi_n,\Delta]\Delta^{n-1}z+[\MQ\Delta,\varphi_n]
 \Delta^{n-1}z+\varphi_n\MQ\Delta^n z=R+\varphi_n 
 \MQ \Delta^nz$,  with 
 $\|R\|_{H^2n}=O(\|\rho-1\|_{W^{2,\infty}}\|z\|_{H^{2n}})$  and the same observation stands for
 $\MP\phi_n\Delta^nz$, thus
\begin{equation*}
\|\MQ\varphi_n\Delta^nz\|_{L^2}+\|\MP\phi_n\Delta^nz\|_{L^2}
=\|\Delta^nz\|_{L^2}+o(\|z\|_{H^{2n-1}}).
\end{equation*}
Using \eqref{intenergy} for $n=0,N$ we conclude
\begin{equation*}
\|z\|_{\mathcal{H}^{2N}}\sim 
\|z(t)\|_{H^{2N}}^2+\|\rho(t)-1\|_{L^2}^2\lesssim \int_0^t\|z(s)\|_{\mathcal{H}^{2N}}^2\|z(s)\|_{\mathcal{W}^{1,\infty}}ds.
\end{equation*}
\section{Control of the quadratic dispersive terms}
\label{spacetime}
The key result in \cite{AudHasp2} was the uniform bounds on $t\geq 0$
\begin{equation*}
\|xe^{-itH}\Psi\|_{L^2}\lesssim \varepsilon,\ \|\Psi\|_{W^{k,p}}
\lesssim \varepsilon/(1+t)^\gamma,
\end{equation*}
for irrotational initial data (that is $\MP u=0$). 
Actually in \cite{AudHasp2}  since $\MQ u+iw=u+iw=\n (\phi+r)$, it 
was more convenient to work on $\widetilde{\Psi}=U\phi+r$. 
This difference causes merely a shift in regularity indices as 
$\|\widetilde{\Psi}\|_{H^N\times W^{k,p}}\sim \|\Psi\|_{H^{N-1}
\times W^{k-1,p}}$. \\
We summarize here the arguments that can be used as a blackbox to obtain 
the bounds of the bootstrap argument \eqref{boot}. A few estimates are 
performed, but since the detailed analysis would be quite lengthy and is 
basically a repetition mutatis mutandis of the arguments in \cite{AudHasp2},
we choose to only sketch the argument and point to the appropriate section 
of \cite{AudHasp2} when needed.
\paragraph{Generic nonlinearity}
According to \eqref{equfinal}, and linearizing 
$a-1=a'(1)l+R$, with $R$ quadratic in $l$,
$2w-g'w=\n(2l-g'(l))=\n(g''(1)l^2/2+O(l^3))$, the quadratic 
purely dispersive nonlinearity is 
\begin{equation}\label{quaddisp}
\n\bigg((\Delta-2)b+
a'(1)l\text{div}w_1
-\frac{1}{2}\big(|\MQ u|^2-|w_1|^2\big)\bigg)
-g''(1)\n(l^2/2)
-iU^{-1}\n \text{div}\big(a'(1)l\MQ u\big).
\end{equation}
Following \cite{AudHasp2} we denote $\Psi^{\pm}$ as 
a placeholder for $\Psi$ or $\overline{\Psi}$.
Since $\MQ u=(\Psi^++\Psi^-)/2$, $w_1=
U(\Psi^+-\Psi^-)/(2i)$, $l=\Delta^{-1}U\text{div}
(\Psi^+-\Psi^-)/(2i)+R$, $R$ quadratic (see the 
change of variables of lemma \ref{formenorm}),
all quadratic nonlinearities can be written as nonlinearities 
in $\Psi^\pm$.
Their precise form does not really matter, the main 
point is that they all take the form 
\begin{equation}
\n B_g[\Psi^\pm,\Psi^\pm]=\n \mathcal{F}^{-1}\bigg(
\int_{\R^d}\widehat{\Psi^\pm}\cdot B_{g}(\eta,\xi-\eta)
\cdot \widehat{\Psi^\pm}d\eta\bigg),
\end{equation}
with $B_g$ a matrix valued symbol that can be for example
$\di \frac{(|\xi|^2+2)(a'(1)-1)}{2(2+|\eta|^2+|\xi-\eta|^2)}$, 
$\di a'(1)\frac{U(\eta)\eta}{|\eta|^2}\otimes 
(\xi-\eta)$, $\di-iU^{-1}(\xi)\frac{U(\eta)\eta^t}{|\eta|^2}
\otimes \xi$...
\paragraph{The method of space time resonances}
We denote $\widetilde{\Psi^\pm}=\mathcal{F}\big
((e^{-itH})^\pm\big)
\Psi)^\pm$. We recall that the equation 
\eqref{equfinal} reads 
$$\partial_t\Psi-iH\Psi=\mathcal{N}(\Psi,\MQ u)=
\mathcal{D}(\Psi)+\mathcal{T}(\Psi,\MP u),$$
where $\mathcal{D}$, resp. $\mathcal{T}$, correspond to the 
purely dispersive, resp. dispersive transport terms.
Let $B[\Psi^\pm,\Psi^\pm]$ a generic nonlinearity, 
the Duhamel formula leads to terms
\begin{equation}\label{bilingeneric}
\mathcal{F}\bigg(e^{-itH}\int_0^te^{i(t-s)H}
B_g[\Psi^\pm,\Psi^\pm]ds\bigg)
=\int_0^t\int_{\R^d}e^{-is\Omega_{\pm\pm}}
\widetilde{\Psi^\pm}(\eta)\cdot B_g(\eta,\xi-\eta)\cdot
\widetilde{\Psi^\pm}
(\xi-\eta)d\eta ds,
\end{equation}
where $\Omega_{\pm\pm}(\xi,\eta)=H(\xi)\mp H(\eta)\mp 
H(\xi-\eta)$. The estimates do not require to split 
the various cases $\Omega_{++},\Omega_{-,+}\cdots$,
so we write (if not ambiguous) $\Omega$ instead of 
$\Omega_{\pm,\pm}$.\\
Since $\partial_s (e^{-isH}\Psi)=e^{-isH}(
\mathcal{D}(\Psi)+\mathcal{T}(\Psi,\MP u)$, an 
integration by parts in $s$ ``improves''
the nonlinearity which becomes cubic.
Similarly from the identity 
$$
e^{-is\Omega}=\frac{\n_\eta\Omega}{-is
|\n_\eta\Omega|^2}\cdot \n_\eta e^{-is\Omega},
$$
an integration by parts in $\eta$ leads to a gain of 
decay of $1/s$. Of course these integrations by parts
are fruitful only if $\Omega$,$|\n_\eta\Omega|$ do
not cancel (resp. no time resonances and no space 
resonances), this leads to define the space-time 
resonant set as 
$\di \{(\xi,\eta):\ \Omega=0\}\cap 
\{(\xi,\eta):\ \n_\eta \Omega=0\}$.
The so-called method of 
space-time resonances simply consists in splitting the 
phase space in time non resonant and space non resonant regions and do the integration by parts
accordingly.
\\
Some difficulties are that the 
space-time resonant region is actually quite large, 
as one can check that in the case of 
$\Omega_{-+}$ it is $\{(\xi,\eta):\ \xi=0\}$, thus 
a subspace of dimension $3$ in $\R^6$. A second issue 
is that the symbol $H(\xi)=|\xi|\sqrt{2+|\xi|^2}$ 
is similar to $\sqrt{2}|\xi|$ at low frequencies 
(wave-like), so that for $\varepsilon,\eta$ small 
$\Omega_{-+}(\varepsilon\eta,\eta)\sim -3\varepsilon|\eta|^3/(2\sqrt{2})$. This 
third order cancellation is worse than for the 
Schr\"odinger equation, and prevents any use of the 
Coifman-Meyer theorem. Instead, we use the following rough 
multiplier lemma due to Guo and Pausader (inspired by lemma 
10.1 in \cite{GNT3})
\begin{lemma}[\cite{GuoPaus}] \label{multsing}
For $0\leq s\leq d/2$, let 
$\|B\|_{[B^s]}=\min\big(\|B(\eta,\xi-\eta)\|_{\widetilde{L}^\infty_\xi
 \dot{B}^s_{2,1,\eta}},\|B(\xi-\zeta,\zeta)\|_{\widetilde{L}^\infty_\xi
 \dot{B}^s_{2,1,\zeta}}\big)$. For $q_1,q_2$ such that 
 $2\leq q_2,q'_1\leq \frac{2d}{d-2s}$ and
$\di
\frac{1}{q_2}+\frac{1}{2}=\frac{1}{q_1}+\frac{1}{2}-\frac{s}{d},
$
then 
\begin{equation*}
\|B[f,g]\|_{L^{q_1}}\lesssim \|B\|_{[B^s]}\|f\|_{L^{q_2}}\|g\|_{L^2}.
\end{equation*}
Moreover, for $2\leq q_1,q_2,q_3\leq 2d(d-2s)$, and $\di 
\frac{1}{q_3}+\frac{1}{2}-\frac{s}{d}=\frac{1}{q_1}+\frac{1}{q_2}$
\begin{equation*}
\|B[f,g]\|_{L^{q_3}}\lesssim \|B\|_{[B^s]}\|f\|_{L^{q_1}}\|g\|_{L^{q_2}}.
\end{equation*}
\end{lemma}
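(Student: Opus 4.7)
The plan is to follow the scheme of Lemma 10.1 in \cite{GNT3}, performing a Littlewood-Paley decomposition of the symbol $B$ in one of its frequency arguments. By the minimum in the definition of $\|B\|_{[B^s]}$, I may assume that the Besov norm is taken in $\eta$; the other case is handled identically after the change of variable $\zeta=\xi-\eta$, which exchanges the roles of $f$ and $g$. Fix a homogeneous smooth partition of unity with $\chi_j$ supported in $|\eta|\sim 2^j$, and set $B_j(\eta,\zeta):=\chi_j(\eta) B(\eta,\zeta)$. Since $B_j$ vanishes off $|\eta|\sim 2^j$, one has $B_j[f,g]=B_j[P_j f,g]$, where $P_j$ is the Littlewood-Paley projector at scale $2^j$ acting on the first slot.

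The crux of the argument is a single-block estimate of the form
\begin{equation*}
\|B_j[P_j f,g]\|_{L^{q_1}}\lesssim \|B_j\|_{L^\infty_\xi L^2_\eta}\,\|P_j f\|_{L^{\tilde q}}\,\|g\|_{L^2},
\end{equation*}
obtained by Cauchy-Schwarz in the $\eta$ integral (which extracts the $L^2_\eta$ norm of $B_j$ uniformly in $\xi$), Plancherel on the remaining factor, and Hölder in the output variable $x$. The advantage of working block by block is that this step requires no pointwise smoothness on $B$ whatsoever: only that $B$ be square-integrable over a dyadic shell in $\eta$, which is precisely the content of the Besov norm on the symbol side. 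One then converts $\|P_j f\|_{L^{\tilde q}}$ back to $\|P_j f\|_{L^{q_2}}$ by Bernstein's inequality, paying a factor $2^{js}$ that is exactly compensated by the shift in the Besov index; this is where the exponent relation $\tfrac{1}{q_2}=\tfrac{1}{q_1}-\tfrac{s}{d}$ and the restriction $q_2'\leq 2d/(d-2s)$ (forcing the Sobolev embedding $\dot W^{s,q_2}\hookrightarrow L^{\tilde q}$ to be admissible) enter.

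Summation over $j$ is then an $\ell^1$ operation matching the structure of $\dot B^{s}_{2,1}$: the bound $\sum_j 2^{js}\|B_j\|_{L^\infty_\xi L^2_\eta}\lesssim \|B\|_{[B^s]}$ follows directly from the definition of the Besov space, noting that the ``tilde'' in $\widetilde L^\infty_\xi$ is precisely what is needed for the $\sup$ over $\xi$ to commute with the $\ell^1$ sum in $j$. The second, trilinear assertion is proved in exactly the same manner, replacing the $L^2$ factor for $g$ by an $L^{q_2}$ factor and using a second Hölder splitting in the output variable together with a Bernstein bound on the frequency-localized pieces of $g$. The main technical obstacle will be the careful bookkeeping of the two endpoint cases in the Sobolev-Bernstein step and the verification that neither the introduction of the $2^{js}$ weight nor the use of the non-standard $\widetilde L^\infty$ quantity produces a logarithmic loss that would spoil the $\ell^1$ summation; handling this is essentially why the Besov regularity $\dot B^s_{2,1}$ (rather than Sobolev $\dot H^s$) is used in the hypothesis.
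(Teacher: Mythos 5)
The paper does not prove this lemma; it is imported verbatim from Guo--Pausader (and traced back to Lemma~10.1 of \cite{GNT3}), so there is no ``paper's own proof'' to compare against. Evaluating your sketch on its merits: the architecture is right --- decompose $B$ dyadically in $\eta$, prove a single-block estimate, use Bernstein to trade the $2^{js}$ weight against a change of Lebesgue exponent, and sum in $j$ using the $\ell^1$ structure of $\dot B^s_{2,1}$ (and, correctly observed, the tilde on $\widetilde L^\infty_\xi$ is exactly what lets $\sum_j 2^{js}\sup_\xi\|B_j\|_{L^2_\eta}$ be finite). But the pivotal single-block estimate, asserted to follow from ``Cauchy--Schwarz in $\eta$, Plancherel, and H\"older in $x$,'' does not hold up as stated. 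Cauchy--Schwarz in $\eta$ gives a pointwise bound on $\bigl|\mathcal{F}\bigl(B_j[P_jf,g]\bigr)(\xi)\bigr|$ by $\|B_j\|_{L^\infty_\xi L^2_\eta}\,\bigl\|\widehat{P_jf}\,\hat g(\xi-\cdot)\bigr\|_{L^2_\eta}$, and from a bound on the Fourier transform you can only recover an $L^2$ norm by Plancherel (or an $L^{q_1}$ norm by Hausdorff--Young, which requires first controlling $\|\mathcal{F}(B_j[\cdot,\cdot])\|_{L^{q_1'}}$, a sub-$L^2$ norm); there is no ``remaining factor'' on which H\"older in $x$ can then act independently. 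That step would effectively force $q_1=2$.

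The ingredient that the proof genuinely needs --- and that is missing from your sketch --- is a Fourier-series expansion of the localized symbol in the $\eta$ variable on a cube of side $\sim 2^j$ containing the support of $\chi_j$. Writing $B_j(\eta,\zeta)=\sum_{k} c^j_k(\zeta)\,e^{ia^j_k\cdot\eta}$ on that cube turns the bilinear multiplier into an absolutely convergent sum of \emph{genuine pointwise products} $\sum_k (P_jf)(\cdot+a^j_k)\,\bigl(c^j_k(D)g\bigr)(\cdot)$, to which H\"older applies directly block by block; Plancherel on the cube then identifies the $\ell^2_k$ size of the coefficients $c^j_k(\zeta)$ with $\bigl(2^{-jd}\int_{|\eta|\sim 2^j}|B_j(\eta,\zeta)|^2d\eta\bigr)^{1/2}$ (producing the $L^2_\eta$ norm, with a $2^{-jd/2}$ factor that, together with Bernstein, yields the $2^{js}$ weight and the exponent relation). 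Without this device the passage from ``$L^2_\eta$ control of the symbol'' to ``$L^{q_1}$ control of $B_j[f,g]$'' is unjustified, so as written the proposal has a genuine gap rather than merely sparse detail.
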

\paragraph{The black box}
We use the following arguments directly taken from \cite{AudHasp2}: 
let $(\chi^a)_{a\in 2^\Z}$ a dyadic partition of unity, 
$\text{supp}(\chi^a)\subset\{|\xi|\sim a\}$, for $a,b,c\in (2^\Z)^3$, 
$B_g$ a symbol associated to one of the nonlinearities, 
a function $\Phi(\xi,\eta)$ 
splits the phase space in time non resonant and space 
non resonant regions (in a sense to precise in lemma \ref{estmult}),
and we define the frequency localized symbols 
\begin{equation}
B^{a,b,c,T}=\Phi \chi^a(\xi)\chi^b(\eta)\chi^c(\zeta)B_g,\ 
B^{a,b,c,X}=(1-\Phi)\chi^a\chi^b\chi^cB_g
=\chi^a\chi^b\chi^cB^X,
\end{equation}
with $\zeta=\xi-\eta$. Note that due to the relation $|\zeta|=|\xi-\eta|$, 
$B^{a,b,c}$ is zero except if $b\lesssim a\sim c,
a\lesssim b\sim c$, $c\lesssim b\sim a$.

\begin{lemma}\label{estmult}
For $a,b,c\in \Z^3$, let 
$$\mathcal{B}^{a,b,c,T}:=\frac{B^{a,b,c,T}}{\Omega},\ 
\mathcal{B}_1^{a,b,c,X}=\frac{\n_\eta \Omega B^{a,b,c,X}}
{|\n_\eta\Omega|^2},\ \mathcal{B}_2^{a,b,c,X}=\n_\eta\cdot \mathcal{B}_1^{a,b,c,X},$$ $m=\min(a,b,c),\ M=\max(a,b,c),
l=\min(b,c)$. For $0<s<2$, we have 
\begin{eqnarray*}
\text{if }M\gtrsim 1,\ \|\mathcal{B}^{a,b,c,T}\|_{[B^s]}
\lesssim \frac{\la M\ra l^{3/2-s}}{\la a \ra},& 
\|\mathcal{B}_1^{a,b,c,X}\|_{[B^s]}
\lesssim \frac{\la M\ra^2 l^{3/2-s}}{\la a \ra},\\ 
&\|\mathcal{B}_2^{a,b,c,X}\|_{[B^s]}
\lesssim \frac{\la M\ra^2 l^{1/2-s}}{\la a \ra}, 
\\
\text{if }M<<1,\ \|\mathcal{B}^{a,b,c,T}\|_{[B^s]}
\lesssim  M^{-s} l^{1/2-s},&
\|\mathcal{B}_1^{a,b,c,X}\|_{[B^s]}
\lesssim  M^{1-s} l^{3/2-s},\\
&\|\mathcal{B}_2^{a,b,c,X}\|_{[B^s]}
\lesssim  M^{-s} l^{1/2-s},
\end{eqnarray*}
\end{lemma}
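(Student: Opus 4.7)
The plan is to reduce everything to pointwise $\eta$-derivative bounds on the symbol and then invoke an elementary Littlewood--Paley/Bernstein estimate. By the symmetry $(\eta,\xi-\eta)\leftrightarrow(\xi-\zeta,\zeta)$ built into the min defining $[B^s]$, I may assume $b\le c$ and work with $\|\cdot\|_{\widetilde L^\infty_\xi \dot B^s_{2,1,\eta}}$; for each fixed $\xi$ the symbol is then supported on the annulus $|\eta|\sim l=b$. A standard computation yields, for any smooth $f$ supported on $|\eta|\sim l\subset\R^3$,
\[
\|f\|_{\dot B^s_{2,1}}\lesssim l^{3/2-s}\sum_{|\alpha|\le N}l^{|\alpha|}\|\partial_\eta^\alpha f\|_{L^\infty_\eta},
\]
which handles $\mathcal B^{a,b,c,T}$ and $\mathcal B_1^{a,b,c,X}$; the symbol $\mathcal B_2^{a,b,c,X}=\nabla_\eta\cdot \mathcal B_1^{a,b,c,X}$ carries one extra $\eta$-derivative and hence gains an additional factor $l^{-1}$, producing $l^{1/2-s}$ in place of $l^{3/2-s}$. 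Thus the entire lemma reduces to establishing pointwise bounds of the form $|\partial_\eta^\alpha \mathcal B|\lesssim A\, l^{-|\alpha|}$ with $A$ the prefactor appearing in each stated inequality.

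The pointwise estimates combine three layers. First, each symbol $B_g$ appearing in \eqref{quaddisp} is smooth on the slab $\{|\xi|\sim a,\,|\eta|\sim l,\,|\xi-\eta|\sim c\}$ and satisfies $|\partial_\eta^\alpha B_g|\lesssim \langle M\rangle^\kappa l^{-|\alpha|}$ for some fixed $\kappa$, which can be read off directly from the explicit symbols listed after \eqref{quaddisp}. Second, on the time non-resonant support of $\Phi$ the phase $\Omega_{\pm\pm}=H(\xi)\mp H(\eta)\mp H(\xi-\eta)$, with $H(\xi)=|\xi|\sqrt{2+|\xi|^2}$, admits a lower bound: in the high-frequency regime $M\gtrsim 1$, $H$ behaves like $|\cdot|^2$ and $|\Omega|\gtrsim \langle a\rangle/\langle M\rangle$ in the spirit of the Schr\"odinger phase; in the low-frequency regime $M\ll 1$, $H(\xi)=\sqrt 2|\xi|+O(|\xi|^3)$ is wave-like and only a weaker cubic lower bound is available, which is responsible for the $M^{-s}$ prefactor. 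Third, on the space non-resonant support of $1-\Phi$ the gradient $\nabla_\eta\Omega$ admits analogous lower bounds ($|\nabla_\eta\Omega|\gtrsim \langle a\rangle/\langle M\rangle$ in the high regime, $|\nabla_\eta\Omega|\gtrsim M^2$ in the low). A chain rule computation then closes the pointwise estimates, each $\eta$-derivative falling on $1/\Omega$, on $\nabla_\eta\Omega/|\nabla_\eta\Omega|^2$, or on a cutoff costing only $l^{-1}$ because $l$ is the common scale of variation in $\eta$ on the support.

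The main obstacle is the low-frequency regime. Because $H$ is wave-like at the origin, $\Omega_{-+}$ vanishes to third order along $\xi=0$ rather than just on a codimension-two set, so the space--time resonant locus is large and the lower bounds on $|\Omega|$ and $|\nabla_\eta\Omega|$ are much weaker than their Schr\"odinger counterparts. Choosing the cutoff $\Phi$ that correctly balances these two bounds, so that the time non-resonant and space non-resonant contributions both satisfy the claimed estimates with matched prefactors $M^{-s}$ and $M^{1-s}$, is the only delicate point. I would follow the construction in \cite{AudHasp2} verbatim, where this balancing is carried out in detail; the specific $B_g$ that arise here enter the argument only through their harmless polynomial growth in $\langle M\rangle$, so the proof is otherwise a straightforward bookkeeping exercise combining the three layers above with the Bernstein reduction.
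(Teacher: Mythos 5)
Two points. First, the paper does not actually prove Lemma~\ref{estmult}: it is stated as part of the ``black box'' imported verbatim from \cite{AudHasp2} (the preamble to the lemma says exactly that), so there is no proof in this paper to compare your argument against, only the reference.

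Second, your stated reduction step is not correct in the low-frequency regime, and this is not a cosmetic issue. You assert that the Bernstein inequality $\|f\|_{\dot B^s_{2,1}}\lesssim l^{3/2-s}\sum_{|\alpha|\le N}l^{|\alpha|}\|\partial_\eta^\alpha f\|_{L^\infty}$ reduces the lemma to pointwise bounds of the form $|\partial_\eta^\alpha \mathcal B|\lesssim A\,l^{-|\alpha|}$ with a \emph{single} scale of variation $l$. But the claimed prefactors for $M\ll 1$, namely $M^{-s}l^{1/2-s}=l^{1/2}(Ml)^{-s}$ and $M^{1-s}l^{3/2-s}=Ml^{3/2}(Ml)^{-s}$, display an $s$-dependence that can only come from a \emph{second}, smaller scale $\mu\sim Ml\ll l$: the phase $\Omega$ degenerates cubically near the resonant set, and each $\eta$-derivative hitting $1/\Omega$ or $\n_\eta\Omega/|\n_\eta\Omega|^2$ costs a factor $\mu^{-1}$, not $l^{-1}$. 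If you feed $|\partial_\eta^\alpha\mathcal B|\lesssim A(Ml)^{-|\alpha|}$ into your Bernstein inequality with a \emph{fixed} $N$, the geometric factors $(l/\mu)^{|\alpha|}=M^{-|\alpha|}$ blow up and you obtain the non-optimal prefactor $M^{-N}$ rather than the interpolated $M^{-s}$. The correct argument must retain both scales in the Littlewood--Paley sum over $\eta$-frequencies — the norm is controlled by the support bound on the range $2^j\lesssim 1/\mu$ and by the high-derivative decay above, yielding $A\,l^{3/2}\mu^{-s}$, not a plain polynomial in $l$ — and this two-scale bookkeeping is precisely where the $[B^s]$-norm (rather than, say, a Coifman--Meyer bound) is indispensable. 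You do flag ``balancing'' as the delicate point, but your explicit reduction to single-scale $l^{-|\alpha|}$ bounds would actually fail, so the last sentence claiming the rest is ``a straightforward bookkeeping exercise combining the three layers above with the Bernstein reduction'' overreaches; you would need to state and use the two-scale version.
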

\begin{lemma}\label{decaybonus}
We have for $t\geq 0$
\begin{equation}
 \|U^{-1}\Psi\|_{L^6}\lesssim \frac{1}{t^{3/5}}
 (\|xe^{-itH}\Psi\|_{L^2}+\|\Psi\|_{H^1}).
\end{equation}
\end{lemma}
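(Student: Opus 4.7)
The plan is to combine a weighted $L^2$ control on $U^{-1}\Psi$ (which carries no decay in $t$) with the Lorentz dispersive estimate applied to $\Psi$ itself (which yields the full $t^{-1}$ rate in $L^6$), via a Littlewood-Paley frequency decomposition at a scale $N=N(t)$ to be optimized. Throughout I would set $f=e^{-itH}\Psi$, so that $|\widehat\Psi|=|\widehat f|$ pointwise in $\xi$ and $\|f\|_{H^1}=\|\Psi\|_{H^1}$ by unitarity, and abbreviate $A:=\|xf\|_{L^2}+\|\Psi\|_{H^1}$.

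First, the non-decaying bound $\|U^{-1}\Psi\|_{L^2}\lesssim A$ follows from $U^{-1}(\xi)^2=(2+|\xi|^2)/|\xi|^2$ and Plancherel, which give $\|U^{-1}\Psi\|_{L^2}^2 = 2\int|\widehat f|^2/|\xi|^2\,d\xi+\|f\|_{L^2}^2$; Hardy's inequality in $\mathbb{R}^3$ applied to $\widehat f$ controls the first term by $\|\nabla_\xi\widehat f\|_{L^2}^2=\|xf\|_{L^2}^2$. Next, the decay estimate $\|\Psi\|_{L^6}\lesssim t^{-1}A$ I would derive from the Lorentz version of Theorem~\ref{dispersion} with $(d,p)=(3,6)$,
\[
\|\Psi\|_{L^{6,2}}\lesssim t^{-1}\|U^{1/3}f\|_{L^{6/5,2}}\lesssim t^{-1}\|f\|_{L^{6/5,2}},
\]
using that $U^{1/3}$ is a bounded Fourier multiplier, combined with the Lorentz H\"older estimate $\|f\|_{L^{6/5,2}}\lesssim\|\x^{-1}\|_{L^{3,\infty}}\|\x f\|_{L^2}\lesssim A$ and the embedding $L^{6,2}\hookrightarrow L^6$.

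With these two ingredients at hand, I would split $U^{-1}\Psi=\chi_{<N}U^{-1}\Psi+\chi_{\geq N}U^{-1}\Psi$. For the low-frequency part, Bernstein's inequality gives $\|\chi_{<N}U^{-1}\Psi\|_{L^6}\lesssim N\|U^{-1}\Psi\|_{L^2}\lesssim NA$. For the high-frequency part, the Fourier multiplier $U^{-1}\chi_{\geq N}$ satisfies a Mihlin-H\"ormander condition with constant of order $N^{-1}$, hence $\|\chi_{\geq N}U^{-1}\Psi\|_{L^6}\lesssim N^{-1}\|\Psi\|_{L^6}\lesssim N^{-1}t^{-1}A$. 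Balancing at $N=t^{-1/2}$ then yields $\|U^{-1}\Psi\|_{L^6}\lesssim t^{-1/2}A$, which a fortiori implies the stated $t^{-3/5}$ rate; and for $t\leq 1$ the trivial Sobolev bound $\|U^{-1}\Psi\|_{L^6}\lesssim\|\nabla U^{-1}\Psi\|_{L^2}\sim\|\Psi\|_{H^1}\leq A$ suffices since $t^{-3/5}\geq 1$.

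The main obstacle is the low-frequency singularity of $U^{-1}$: it prohibits a direct application of the dispersive estimate to $U^{-1}\Psi$ (which would require bounding $\|U^{-2/3}f\|_{L^{6/5}}$, incompatible with the sole control $\|xf\|_{L^2}$), and it makes the Mihlin norm of $U^{-1}\chi_{\geq N}$ blow up as $N\to 0$. The frequency splitting reconciles these two failures, while the appearance of Lorentz spaces in the dispersive step is the standard remedy for the endpoint inequality $\|f\|_{L^{6/5}}\lesssim A$, which fails in three dimensions but holds in its refinement $L^{6/5,2}$.
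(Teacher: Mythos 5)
Your outline is clean in all the supporting steps — the weighted $L^2$ bound on $U^{-1}\Psi$ via Hardy, the Lorentz dispersive estimate for $\|\Psi\|_{L^6}$, and the frequency splitting — but the conclusion is wrong in a way that actually reverses the inequality you need. For $t\geq 1$ (the only regime where the lemma has any content), $t^{-1/2}>t^{-3/5}$, so a bound $\|U^{-1}\Psi\|_{L^6}\lesssim t^{-1/2}A$ is \emph{weaker}, not stronger, than the claimed $t^{-3/5}A$; the implication you invoke (``a fortiori'') runs in the opposite direction. So as written the argument does not prove the lemma.

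The loss is in your treatment of the high-frequency piece: factoring $\chi_{\geq N}U^{-1}$ out as a Mihlin multiplier with norm $N^{-1}$ and then applying dispersion to $\Psi$ discards the $U^{1/3}$ weight present in Theorem~\ref{dispersion}, which at the cutoff frequency $N$ would have bought you an extra $N^{1/3}$. If you instead apply the dispersive estimate directly to $\chi_{\geq N}U^{-1}\Psi$, you get
\begin{equation*}
\|\chi_{\geq N}U^{-1}\Psi\|_{L^{6,2}}\lesssim t^{-1}\|U^{1/3}\chi_{\geq N}U^{-1}f\|_{L^{6/5,2}}
= t^{-1}\|\chi_{\geq N}U^{-2/3}f\|_{L^{6/5,2}}\lesssim t^{-1}N^{-2/3}\|f\|_{L^{6/5,2}}\lesssim t^{-1}N^{-2/3}A,
\end{equation*}
since $\chi_{\geq N}U^{-2/3}$ has Mihlin norm $O(N^{-2/3})$ when $N\lesssim 1$. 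Balancing $NA$ against $t^{-1}N^{-2/3}A$ gives $N=t^{-3/5}$ and the stated rate. The paper avoids the splitting altogether: it writes $U^{-1}=(U^{-1/3})^{3/5}(U^{-2})^{2/5}$ and uses the interpolation inequality $\|U^{-1}\Psi\|_{L^6}\lesssim\|U^{-1/3}\Psi\|_{L^6}^{3/5}\|U^{-2}\Psi\|_{L^6}^{2/5}$; the first factor is then hit directly with the Lorentz dispersive estimate (the $U^{-1/3}$ cancels the $U^{1/3}$ weight so no singularity appears), and the second with $\|U^{-2}\Psi\|_{L^6}\lesssim\|U^{-1}\Psi\|_{L^2}+\|\Psi\|_{H^1}$, giving $t^{-3/5}$ in one stroke. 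Both routes land on the same exponent once the $U^{1/3}$ weight is used rather than discarded.
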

\begin{proof}
By interpolation and the dispersion estimate 
\ref{dispersion}
\begin{eqnarray*}
\|U^{-1}\Psi\|_{L^6}\leq \|U^{-1/3}\Psi\|_6^{3/5}
\|U^{-2}\Psi\|_{L^6}
&\lesssim &\bigg(\frac{\|e^{-itH}\Psi\|_{L^{6/5,2}}}{t}\bigg)^{3/5}\big(\|U^{-1}\Psi\|_{L^2}+\|\Psi\|_{H^1}
\big)^{2/5}\\
&\lesssim& \bigg(\frac{\|xe^{-itH}\Psi\|_2}{t}\bigg)
^{3/5}\big(\|xe^{-itH}\Psi\|_2+\|\Psi\|_{H^1}\big)^
{2/5}.
\end{eqnarray*}

\end{proof}

\paragraph{Control of the purely dispersive quadratic terms in $W^{k,p}$}
This (long) paragraph is devoted the bootstrap 
of the $W^{k,p}$ estimate. We focus on control 
of space non resonant and time non resonant terms.
\subparagraph{Control of time non resonant terms 
in $W^{k,p}$}
Integrating by parts in $s$, the frequency localized Duhamel terms of
\eqref{bilingeneric} lead to the following quantities
\begin{equation}
I^{a,b,c,T}:=
\int_0^te^{i(t-s)H}\big(\mathcal{B}^{a,b,c,T}
[\mathcal{N}^\pm,\psi^\pm]+\mathcal{B}^{a,b,c,T}
[\psi^\pm,\mathcal{N}^\pm]\big)ds
-\big[e^{i(t-s)H}\mathcal{B}^{a,b,c,T}[\Psi^\pm,\Psi^\pm]\big]_0^{t}.
\end{equation}
Consider for example $\di \int_0^{t-1}e^{i(t-s)H}\mathcal{B}^{a,b,c,T}
[\mathcal{D}^\pm+\mathcal{T}^\pm,\Psi^\pm]ds$, 
$b\lesssim a\sim c$. We choose $p,N$ such that 
 $1/2-2\gamma>0$, $N-k-1/2+\gamma>0$ (this corresponds to 
$p$ close enough to $6$ and $N$ large enough) and apply lemma \ref{multsing}
with $s=1+\gamma$, 
\begin{eqnarray*}
\bigg\|\n^{k_1}\int_0^{t-1}\sum_{b\lesssim a\sim c}
\mathcal{B}^{a,b,c,T}\|[\mathcal{D}^\pm
,\Psi^\pm]ds\bigg\|_p
&\lesssim&
\int_0^{t-1}\sum_{b\lesssim a\sim c\leq 1}
\frac{ab\|\mathcal{B}^{a,b,c,T}\|_{[B^{1+\gamma}]}
\|U^{-1}\mathcal{D}\|_{2} \|U^{-1}\Psi\|_2}
{(t-s)^{1+\gamma}}\\
&&+\sum_{b\lesssim a\sim c,\ c\geq 1}
\frac{c^{-N+k_1}\|\mathcal{B}^{a,b,c,T}\|_{[B^{1+\gamma}]}
\|\mathcal{D}\|_{2} \|\Psi\|_{H^N}}
{(t-s)^{1+\gamma}}ds
\\
&\lesssim&
\int_0^{t-1}\sum_{b\lesssim a\sim c\leq 1}
\frac{aba^{-(1+\gamma)}
b^{1/2-(1+\gamma)}\|U^{-1}\mathcal{D}\|_{2} 
\|U^{-1}\Psi\|_2}{(t-s)^{1+\gamma}}
\\
&&+\sum_{b\lesssim a\sim c,\ c\geq 1}
\frac{b^{3/2-(1+\gamma)}
\|\mathcal{D}\|_{2} \|\Psi\|_{H^N}}
{a^{N-k_1}N(t-s)^{1+\gamma}}ds\\
&\lesssim& \int_0^{t-1}
\frac{\|U^{-1}\mathcal{D}\|_{2}(\|U^{-1}\Psi\|_2+\|\Psi\|_{H^N})}
{(t-s)^{1+\gamma}}ds,
\end{eqnarray*}
Then $\|U^{-1}\Psi\|_2\lesssim \|e^{-itH}|\Delta|^{-1/2}\Psi\|_2+\|\Psi\|_2
\lesssim \|e^{-itH}\Psi\|_{L^{6/5,2}}+\|\Psi\|_2\lesssim \|xe^{-itH}
\Psi\|_2+\|\Psi\|_2,$ where $L^{6/5,2}$ is the Lorentz space, and we used 
the generalized H\"older inequality $L^{6/5,2}\times L^{3,\infty}\subset L^2$.
On the other hand since $\n$ is in factor of all purely dispersive 
quadratic nonlinearities (see equation \eqref{quaddisp}), it compensates 
the singular factor $U^{-1}$, and one easily gets
\begin{eqnarray*}
\|U^{-1}\mathcal{D}\|_{L^2}\lesssim \|\Psi\|_{W^{2,4}}^2 \lesssim 
\|\Psi\|_{H^2}^{\frac{1+4\gamma}{4+4\gamma}}\|\Psi\|_{W^{2,p}}^{\frac{3}{4+4\gamma}}.
\end{eqnarray*}
The bootstrap assumption gives the bound
\begin{eqnarray*}
C^3\int_0^{t-1}\frac{\varepsilon^{1/2}\big(\delta+\varepsilon/(1+s)^{1+\gamma}
\big)^{\frac{3}{2(1+\gamma)}}\varepsilon}{(t-s)^{1+\gamma}}ds&\lesssim& 
C^3\varepsilon^{3/2}\bigg(\delta^{\frac{3}{2(1+\gamma)}}
+\frac{\varepsilon^{\frac{3}{2(1+\gamma)}}}{t^{\min(1+\gamma,3/(2(1+\gamma)))}}\bigg)\\
&\leq& C^3\varepsilon^{3/2}\bigg(\delta
+\frac{\varepsilon}{t^{1+\gamma}}\bigg),
\end{eqnarray*}
for $\gamma$ small enough. Like the case $d\geq 5$, the estimate of $\int_{t-1}^t$
is simpler, so is the estimate of $[e^{i(t-s)H}\mathcal{B}^{a,b,c,T}
[\Psi^\pm,\Psi^\pm]]_0^{t-1}$, and the cases $a\lesssim b\sim c$, $c\lesssim b\sim a$
are similar. More detailed computations can be found in \cite{AudHasp2} paragraph
$6.1.2$ where  the only difference is that instead of $\|\Psi\|_{W^{k,p}}
\leq C(\delta+\varepsilon/(1+t)^{1+\gamma})$, the boostrap assumption is 
$\|\Psi\|_{W^{k,p}}\leq C\varepsilon/(1+t)^{1+\gamma}$.\\
Omitting these computations, to summarize, 
\begin{equation}\label{decayD}
\bigg\|\int_0^te^{i(t-s)H}\sum_{a,b,c}\mathcal{B}^{a,b,c,T}
[\mathcal{D}^\pm,\psi^\pm]-[e^{-i(t-s)H}\mathcal{B}^{a,b,c,T}[\Psi^\pm,
\Psi^\pm]]_0^t\bigg\|_{W^{k,p}}
\lesssim C^3\varepsilon\bigg(\delta+
\frac{\varepsilon}{(1+t)^{1+\gamma}}\bigg).
\end{equation}
There remains to bound the new term 
$B[\mathcal{T}(\Psi)^\pm,\Psi^\pm]$: for $b\lesssim 
a\sim c\leq 1$
\begin{eqnarray*}
\big\|\n^k_1\int_0^{t-1}e^{i(t-s)H}\mathcal{B}^{a,b,c,T}
[\mathcal{T}(\Psi)^\pm,\Psi^\pm]ds\big\|_{L^p}
\lesssim \int_0^{t-1}\frac{a^{-\gamma}b^{1/2-\gamma}}
{(t-s)^{1+\gamma}}
\|U^{-1}\mathcal{T}\|_2\|U^{-1}\Psi\|_2ds,
\end{eqnarray*}
and for $c\geq 1$
\begin{eqnarray*}
\big\|\n^k_1\int_0^{t-1}e^{i(t-s)H}\mathcal{B}^{a,b,c,T}
[\mathcal{T}(\Psi)^\pm,\Psi^\pm]ds\big\|_{L^p}
\lesssim \int_0^{t-1}\frac{\la a\ra^kb^{3/2-(1+\gamma)}}
{(t-s)^{1+\gamma}\la a\ra^{N+1}}
\|\mathcal{T}\Psi\|_2\|\Psi\|_2ds.
\end{eqnarray*}
We deduce by summation 
\begin{eqnarray*}
\big\|\sum_{b\lesssim a\sim c}\int_0^{t-1}e^{i(t-s)H}\mathcal{B}^{a,b,c,T}
[\mathcal{T}^\pm,\Psi^\pm]ds\big\|_{W^{k,p}}
\lesssim \int_0^{t-1}\frac{\|U^{-1}\mathcal{T}\|_2
\|U^{-1}\Psi\|_2}{(t-s)^{1+\gamma}}ds\\
+\int_0^{t-1}\frac{\|\mathcal{T}\|_2
\|\Psi\|_{H^N}}{(t-s)^{1+\gamma}}ds
 \end{eqnarray*}
Unlike the purely dispersive nonlinearity, the transport-dispersive 
nonlinearity is not well-prepared, let us recall it is
\begin{equation*}
\mathcal{T}= -iU^{-1}\n (\MP u\cdot w_1)-\MQ\big(u\cdot \n \MP u
+\MP u\cdot \n\MQ u\big),
\end{equation*}
but the factor $\MP u$ is much more 
favourable thus we can simply apply the following estimates
\begin{eqnarray*}
 \|U^{-1}\mathcal{T}\|_2&\lesssim& \|\mathcal{T}\|_{W^{1,6/5}}
 \lesssim \|\MP u\|_{W^{k,6/5}}(\|\Psi\|_{H^N}+\|\MP u\|_{H^N}),\\
 \|\mathcal{T}\|_2&\lesssim& \|\MP u\|_2(\|\Psi\|_{H^N}+\|\MP u\|_{H^N }),
\end{eqnarray*}
combined with the bootstrap assumption \eqref{boot} we find
\begin{equation}\label{decayT1}
 \big\|\sum_{b\lesssim a\sim c}\int_0^{t-1}e^{i(t-s)H}\mathcal{B}^{a,b,c,T}
[\mathcal{T}^\pm,\Psi^\pm]ds\big\|_{W^{k,p}}
\lesssim C^3\int_0^{t-1}\frac{\varepsilon^2\delta}{(t-s)^{1+\gamma}}ds
\lesssim C^3\varepsilon^2\delta.
\end{equation}
The integral over $[t-1,t]$ is estimated in the same spirit : for $c\leq 1$
\begin{eqnarray*}
 \big\|\int_{t-1}^te^{i(t-s)H}\mathcal{B}^{a,b,c,T}[\mathcal{T}^\pm,
 \Psi^\pm]ds\big\|_{W^{k,p}}
 &\lesssim &\int_{t-1}^t\|
 \mathcal{B}^{a,b,c,T}[\mathcal{T}^\pm, \Psi^\pm]\big\|_{H^{k+2}}ds
 \\
 &\lesssim& \int_{t-1}^tab\|\mathcal{B}^{a,b,c,T}\|_{[B^{1+\gamma}]}
 \|U^{-1}\mathcal{T}\|_2\|U^{-1}\Psi\|_2ds,
\end{eqnarray*}
 for $c\geq 1$ 
 \begin{eqnarray*}
 \big\|\int_{t-1}^te^{i(t-s)H}\mathcal{B}^{a,b,c,T}[\mathcal{T}^\pm,
 \Psi^\pm]ds\big\|_{W^{k,p}}
 &\lesssim &\int_{t-1}^t\|
 \mathcal{B}^{a,b,c,T}[\mathcal{T}^\pm, \Psi^\pm]\big\|_{H^{k+2}}ds
 \\
 &\lesssim& \int_{t-1}^t\frac{\|\mathcal{B}^{a,b,c,T}\|_{[B^{1+\gamma}]}}
 {c^{N-k-2}}
 \|\mathcal{T}\|_2\|\Psi\|_{H^N}ds.
\end{eqnarray*}
As previously, we have $\|U^{-1}\mathcal{T}\|_2+\|\mathcal{T}\|_2\lesssim 
\big(\|\MP u\|_{W^{k,6/5}}  +\|\MP u\|_{H^k}\big)\big(\|\MP u\|_{H^N}
+\|\Psi\|_{H^N}\big)$
and $\sum_{b\lesssim a\sim c\leq 1}ab\mathcal{B}^{a,b,c,T}\|
_{[B^{1+\gamma}]}+\sum_{b\lesssim a\sim c,\ c\geq 1}
\frac{\mathcal{B}^{a,b,c,T}\|_{[B^{1+\gamma}]}}{c^{N-k-2}}<\infty$, thus
\begin{equation}\label{decayT2}
\big\|\int_{t-1}^t\sum_{b\lesssim a\sim c}e^{i(t-s)H}\mathcal{B}^{a,b,c,T}
[\mathcal{T}^\pm,\Psi^\pm]ds\big\|_{W^{k,p}}\lesssim C^3\int_{t-1}^t
\varepsilon^2\delta ds=C^3\varepsilon^2\delta.
\end{equation}
Putting together \eqref{decayD},\eqref{decayT1},\eqref{decayT2},
\begin{equation}
\label{decayresT}
\big\|\sum_{a,b,c}I^{a,b,c,T} \|_{W^{k,p}}\lesssim 
C^3\varepsilon\bigg(\delta+\frac{\varepsilon}
{(1+t)^{1+\gamma}}\bigg).
\end{equation}
\subparagraph{Control of $I^{a,b,c,X}$ in $W^{k,p}$} Integration by parts 
in $\eta$ does not require to handle the nonlinear term $\mathcal{T}$ which,
in this appendix, is the only novelty compared to \cite{AudHasp2}. \\
If necessary, we simply reproduce the argument with the minor modifications:\\
Since control for $t$ small just follows from the $H^N$ bounds, we focus on $t\geq 1$, and 
the integral over $[1,t-1]$.\vspace{3mm}\\
\textbf{Frequency splitting}\vspace{2mm}\\
Since we only control $xe^{-itH}z$ in $L^\infty L^2$, in order to handle the loss of derivatives 
we follow the idea from \cite{GMS2} which corresponds to distinguish
 low and high frequencies with a threshold frequency depending on $t$. Let $\theta\in C_c^\infty (\R^+)$, 
$\theta|_{[0,1]}=1,\ 
\text{supp}(\theta)\subset[0,2]$, $\Theta(t)=\theta(\frac{|D|}{t^\nu})$, $\nu>0$ small to choose 
later. For any quadratic term 
$B_g[z,z]$, we write
\begin{equation*}
B_g[z^\pm,z^\pm]=\overbrace{B_g[(1-\Theta(t))z^\pm,z^\pm]
+B_g[\Theta(t)z^\pm,(1-\Theta)(t)z^\pm]}
^{\text{high frequencies}}
+\overbrace{B_g[\Theta(t)z^\pm,\Theta(t)z^\pm]}^{\text{low frequencies}}.
\end{equation*}
\subsubsection*{High frequencies}
Using the dispersion estimate \ref{dispersion}, 
product estimates and Sobolev embedding we have for 
$\frac{1}{p_1}=\frac{1+\gamma}{3}$ and for any quadratic term $B^X[\Psi^\pm,\Psi^\pm]$:
\begin{equation}
\begin{aligned}
&\bigg\|\int^{t-1}_{1}e^{i(t-s)H}\big(UB_g[(1-\Theta(t))\Psi^\pm,\Psi^\pm]
+U B_j[\Theta(t)z,(1-\Theta)(t)\Psi^\pm]\big)ds\bigg\|_{W^{k,p}}\\
&\leq  \int^{t-1}_{1}\frac{1}{(t-s)^{1+\gamma}}\|\Psi\|_{W^{k+2,p_1}}  \|(1-\Theta(s))\Psi\|_{H^{k+2}}ds\\
&\leq  \int^{t-1}_{1}\frac{1}{(t-s)^{1+\gamma}}\|\Psi\|_{H^N}^2\frac{1}{s^{\nu(N-2-k)}}ds,
\end{aligned}
\end{equation}
choosing $N$ large enough so that $\nu(N-2-k)\geq 1+\gamma$, we obtain a bound $C_1C^2\varepsilon^2/
t^{1+\gamma}$. 
\subsubsection*{Low frequencies}
We estimate now quadratic term of the form 
$B^{a,b,c,X}[\Theta\Psi^\pm,\Theta\Psi^\pm]$ wich leads to consider:
$${\cal F}I^{a,b,c,X}_3=e^{itH(\xi)}
\int_1^{t-1}\int_{\R^N} \bigg(( e^{-is\Omega} B^{a,b,c,X}(\eta,\xi-\eta)
\widetilde{\Theta \Psi^\pm}(s,\eta)\widetilde{\Theta \Psi^\pm}(s,\xi-\eta)\bigg)d\eta ds,$$
with  $\Omega=H(\xi)\mp H(\eta)\mp H(\xi-\eta)$. Using  $\displaystyle e^{-is\Omega}=\frac{i\nabla_{\eta}\Omega}{s|\nabla_{\eta}\Omega|^2}
\cdot \nabla_{\eta}e^{-is\Omega}$ and denoting $Ri=\frac{\n}{|\n|}$ 
the Riesz operator, $\Theta'(t):=\theta'(\frac{|D|}{t^\delta})$, $J=e^{itH}xe^{-itH}$, an integration by 
part in $\eta$ gives:
\begin{equation}
\begin{aligned}
I^{a,b,c,X}_3=
&-{\cal F}^{-1}(e^{itH(\xi)}\biggl(
\int_1^{t-1}\frac{1}{s} \int_{\R^N}  \big(e^{-is \Omega(\xi,\eta)} \mathcal{B}^{a,b,c,X}_{1,j}(\eta,\xi-\eta)
\cdot \n_\eta [\Theta\widetilde{\Psi^\pm}(\eta)\Theta\widetilde{\Psi^\pm}(\xi-\eta)]\\
&\hspace{65mm}+\mathcal{B}^{a,b,c,X}_{2,j}(\eta,\xi-\eta)\widetilde{\Theta \Psi^\pm}(\eta)
\widetilde{\Theta \Psi^\pm}(\xi-\eta)d\eta\big)ds\biggl)\\
=&
-\int_1^{t-1}\frac{1}{s}e^{i(t-s)H}\bigg(\mathcal{B}^{a,b,c,X}_{1,j}
[\Theta(s)(Jz)^\pm,\Theta(s)\Psi^\pm]
-\mathcal{B}^{a,b,c,X}_{1,j}[\Theta(s)\Psi^\pm,\Theta(s)(Jz)^\pm]\\
&\hspace{8cm}
+\mathcal{B}^{a,b,c,X}_{2,j}[\Theta(s)\Psi^\pm,\Theta(s)
\Psi^\pm]\bigg)ds\\
&-\int_1^{t-1} \frac{1}{s}e^{i(t-s)H}\bigg(\mathcal{B}^{a,b,c,X}_{1,j}
[\frac{1}{s^\delta}Ri\,\Theta'(s)\Psi^\pm,\Theta(s) \Psi^\pm]\\
&\hspace{3.5cm}-\mathcal{B}^{a,b,c,X}_{1,j}[\Theta(s)\Psi^\pm,\frac{1}{s^\delta}Ri
\Theta'(s)\Psi^\pm]\bigg)ds.
\end{aligned}
\label{ippespace}
\end{equation}
 where we recall:
\begin{equation*}
\displaystyle\mathcal{B}^{a,b,c,X}_{1}=\frac{ \nabla_{\eta}\Omega}{|\nabla_{\eta}
\Omega|^2}B^{a,b,c,X},\
\displaystyle\mathcal{B}^{a,b,c,X}_{2}=\nabla_{\eta}\cdot\mathcal{B}_{1}^{a,b,c,X}.
\end{equation*}
\noindent We now use these estimates to bound the first term of \eqref{ippespace}. There are
three areas to consider: $b\lesssim c\sim a,\ c\lesssim c\lesssim a\sim b,\ a\lesssim b\sim c$.
\subparagraph{Estimates for quadratic terms involving $\mathcal{B}^{a,b,c,X}_1$}  In the case $c\lesssim a\sim b$, let $\varepsilon_1>0$ to be 
fixed later. Using Minkowski's inequality, 
dispersion and the rough multiplier theorem \ref{multsing} with $s=1+\ve_1$, $\frac{1}{q}
=1/2+(\gamma-\varepsilon_1)/3$, $s=4/3$, 
$\frac{1}{q_1}=7/18+\gamma/3$ for $a\geq 1$, $0\leq k_1\leq k$ we obtain
$$
\begin{aligned}
&\big\|\n^{k_1}\int_1^{t-1}\frac{1}{s}e^{i(t-s)H}\sum_{c\lesssim a\sim b}
\mathcal{B}^{a,b,c,X}_{1}[\Theta(s)(Jz)^\pm,\Theta(s)z^\pm]ds\big\|_{L^{p}}\\
&\lesssim \int_1^{t-1}\frac{1}{s(t-s)^{1+3\ve}}
\sum_{c\lesssim a\sim b\leq 1}
\|\mathcal{B}^{a,b,c,X}_{1}\|_{[B^{1+\ve_1}]}\|\Theta(s)Jz\|_{L^2}
\|\Theta(s)z]\|_{L^{q}}\\
&\hspace{13mm}+\sum_{c\lesssim a\sim b,\ 
1\lesssim a\lesssim s^\nu}a^{k}
\|\mathcal{B}^{a,b,c,X}_{1}\|_{[B^{4/3}]}\|\Theta(s)Jz\|_{L^{2}}
\|\Theta(s)z]\|_{L^{q_1}}\big)ds\\[2mm]
&\lesssim \int_1^{t-1}\frac{1}{s(t-s)^{1+3\ve}}\bigg(\sum_{a\lesssim 1}\sum_{c\lesssim a\sim b}
\|\mathcal{B}^{a,b,c,X}_{1}\|_{[B^{1+\ve_1}]}\|
\Theta(s)Jz\|_{L^2}\|\Theta(s)z]\|_{L^{q}}\\
&\hspace{27mm}+\sum_{1\lesssim a \lesssim s^{\nu}}a^{k}\sum_{c\lesssim a\sim b}
\|\mathcal{B}^{a,b,c,X}_{1}\|_{[B^{4/3}]}\|\Theta(s)Jz\|_{L^{2}}
\|\Theta(s)z]\|_{L^{q_1}}\bigg)ds
\end{aligned}
$$
Using lemma \ref{estmult} and interpolation we have for $\ve_1<1/4$ and 
$\varepsilon_1-\gamma>0$,
$$
\begin{aligned}
&\sum_{a\lesssim 1}\sum_{c\lesssim a\sim b}
\|\mathcal{B}^{a,b,c,X}_{1}\|_{[B^{1+\ve_1}]}\lesssim 
\sum_{a\lesssim 1}a^{1-(1+\ve_1)}\sum_{c\lesssim a}c^{\frac{3}{2}-(1+\ve_1)}\lesssim 1,\\
&\|\psi(s)\|_{L^{q}}\lesssim\|\psi(s)\|^{\frac{\e_1-\gamma}{1+\gamma}}_{L^{p}}\|\psi(s)\|^{1-\frac{\e_1-\gamma}{1+\gamma}}_{L^{2}} 
\lesssim  C \varepsilon^{1-\frac{\e_1-\gamma}{1+\gamma}}
\bigg(\delta+\frac{\varepsilon}{(1+s)^{1+\gamma}}\bigg)^{\frac{\varepsilon_1-\gamma}{1+\gamma}}.
\end{aligned}
$$ 
In high frequencies we have:
$$
\begin{aligned}
&\sum_{1\lesssim a \lesssim s^{\nu}}a^{k} \sum_{c\lesssim a\sim b} 
\frac{\la M\ra^2c^{3/2-4/3}}{\la a\ra}
\lesssim s^{\nu(k+7/6)},\
\|\psi(s)\|_{L^{q_1}}\lesssim \varepsilon^{\frac{2+6\gamma}{3+3\gamma}}
\bigg(\delta+\frac{\varepsilon}{(1+s)^{1+\gamma}}\bigg)^{\frac{1-3\gamma}{3+3\gamma}}.
\end{aligned}
$$
Finally we conclude that if $\min\big(\varepsilon_1-2\gamma,1/3-2\gamma
-\nu(k+7/6)\big)\geq 0$ (this choice is possible provided $\gamma$ and $\nu$ are 
small enough):
\begin{equation}
\begin{aligned}
\big\|\int_1^{t-1}\frac{1}{s}e^{-i(t-s)H}&\sum_{c\lesssim a\sim b}\mathcal{B}^{a,b,c,X}_1
[\Theta(s)(Jz)^\pm,\Theta(s)z^\pm]ds\big\|_{W^{k,p}}\\
&\lesssim \int_1^{t-1}\frac{C^2\varepsilon}{s(t-s)^{1+\gamma}}
\varepsilon^{1-\frac{\ve_1-\gamma}{1+\gamma}}
\bigg(\delta+\frac{\varepsilon}{(1+s)^{1+\gamma}}\bigg)^{\frac{\varepsilon_1-\gamma}{1+\gamma}}\\
&\hspace{2cm}
+\frac{C^2\varepsilon s^{\nu(7k+6)}}{s(t-s)^{1+\gamma}}\varepsilon^{\frac{2+6\gamma}{3+3\gamma}}
\bigg(\delta+\frac{\varepsilon}{(1+s)^{1+\gamma}}\bigg)^{\frac{1-3\gamma}{3+3\gamma}}ds\\
&\lesssim \frac{C^2\varepsilon^2}{t^{1+\gamma}}+\frac{C^2
\varepsilon^{2-\frac{\ve_1-\gamma}{1+\gamma}}\delta^{\frac{\varepsilon_1-\gamma}{1+\gamma}}}{t}
+\frac{C^2\varepsilon^{\frac{5+9\gamma}{3+3\gamma}}\delta^{\frac{1-3\gamma}{3+3\gamma}}}{t^{1-\nu(7k+6)}}\\
&\lesssim \frac{C^2\varepsilon^2}{t^{1+\gamma}}
+\frac{C^2\varepsilon^2}{t^{\frac{1+\gamma}{1+2\gamma-\varepsilon_1}}}
+C^2\varepsilon\delta
+\frac{C^2\varepsilon^2}{t^{(1-\nu(7k+6))\frac{3+3\gamma}{2+6\gamma}}}
\\
&\lesssim \frac{C^2\varepsilon^2}{t^{1+\gamma}}+C^2\varepsilon\delta.
\end{aligned}\label{estimB1}
\end{equation}

The cases $b\lesssim c\sim a$ are very similar.
The term $\displaystyle \n^{k_1}\int_1^{t-1}\frac{1}{s}e^{i(t-s)H}\mathcal{B}^{a,b,c,X}_{1}
[\Theta(s)z^\pm,\Theta(s)(Jz)^\pm]ds$ is symmetric  while the terms
$$
\begin{aligned}
&\|\n^{k_1}\int_1^{t-1} \frac{1}{s}e^{i(t-s)H}\big(\mathcal{B}^{a,b,c,X}_{1}
[\frac{1}{s^\delta}\frac{\n}{|\n|}\Theta'(s)z^\pm,\Theta(s)z^\pm]\\
&\hspace{3cm}-\mathcal{B}^{a,b,c,X}_{1}[\Theta(s)z^\pm,
\frac{1}{s^\delta}\frac{\n}{|\n|}\Theta'(s)z^\pm]\big)ds\|_{L^p},
\end{aligned}
$$
are simpler since there is no weighted term $Jz$ involved.
\subparagraph{Estimates for quadratic terms involving $\mathcal{B}^{a,b,c,X}_2$}
The last term to consider is 
$$\big\|\n^{k_1}\int_1^{t-1}\frac{1}{s}e^{i(t-s)H}\sum_{a,b,c}\mathcal{B}^{a,b,c,X}_{2}
[\Theta(s)z^\pm,\Theta(s)z^\pm]ds\big\|_{L^p}.  $$
Let us focus on the case $b\lesssim a\sim c$. We use the same indices as for 
$\mathcal{B}_1^{a,b,c,X}$: $s=1+\varepsilon_1$,
$\frac{1}{q}=1/2+(\gamma-\ve_1)/3$, $\frac{1}{q_1}=7/18+\gamma/3$,
\begin{equation}\label{estimX}
\begin{aligned}
&\big\|\n^{k_1}\int_1^{t-1}\frac{1}{s}e^{i(t-s)H}\sum_{b\lesssim a\sim c}
\mathcal{B}^{a,b,c,X}_{2}
[\Theta(s)\Psi^\pm,\Theta(s)\Psi^\pm]ds\big\|_{L^p}\\
&\lesssim \int_1^{t-1}\frac{1}{s(t-s)^{1+\gamma}}\bigg(\sum_{a\leq 1}
\sum_{b\lesssim a\sim c}
U(b)U(c)\|\mathcal{B}^{a,b,c,X}_{2}\|_{[B^{1+\ve_1}]}\|U^{-1}\Theta(s)\Psi
\|_{L^2}\|U^{-1}\Theta(s)\Psi]\|_{L^q}\\
&\hspace{3cm}
+\sum_{1\leq a \lesssim s^{\nu}}a^{k}\sum_{b\lesssim a\sim c}\frac{U(b)}{\la c\ra^k}
\|\mathcal{B}^{a,b,c,X}_{2}\|_{[B^{4/3}]}\|U^{-1}\Theta(s)\Psi\|_{L^{2}}
\|\la \n\ra^k\Theta(s)\Psi]\|_{L^{q_1}}\bigg)ds
\end{aligned}
\end{equation}
According to lemma \ref{estmult}, we have for the first sum (provided 
$\varepsilon_1<1/4$):
$$
\sum_{a\leq 1}\sum_{b\lesssim c\sim a}U(b)U(c)\|\mathcal{B}_2^{a,b,c,X}\|
 _{[B^{1+\varepsilon_1}]}\lesssim 
\sum_{a\leq 1}\sum_{b\lesssim c\sim a}b^{1/2-\varepsilon_1}
a^{\varepsilon_1}\lesssim 1.
$$ 
and according to proposition \ref{decaybonus}
and the bootstrap assumption \eqref{boot}
\begin{eqnarray*}
\|U^{-1}\Psi(s)\|_{L^2}&\lesssim&\|\psi\|_{X},
\\
\|U^{-1}\Psi(s)\|_{L^{q}}&\lesssim& \|U^{-1}\Psi\|_{L^2}^{1-\ve_1+\gamma}
\|U^{-1}\Psi\|_{L^6}^{\ve_1-\gamma}\\
&\lesssim& \frac{\|xe^{-itH}\Psi\|_{2}^{1-\varepsilon_1+\gamma}
\big(\|xe^{-itH}\Psi\|_2+
\|\Psi\|_{H^1}\big)^{\varepsilon_1-\gamma}}
{s^{\frac{3(\ve_1-\gamma)}{5}}}
\\
&\lesssim& \frac{C\varepsilon}{s^{\frac{3(\ve_1-\gamma)}{5}}}.
\end{eqnarray*}
Now for $M\gtrsim 1$ 
\begin{eqnarray*}
\sum_{1\leq a \lesssim s^{\nu}}a^{k}\sum_{b\lesssim c\sim a}
\frac{ U(b) \la M\ra^2b^{1/2-4/3}}{\la a\ra \la c\ra^k}\lesssim \sum_{1\leq a\lesssim s^\nu}
a \lesssim s^{\nu},\hspace{0.4cm} 
\|\psi(s)\|_{W^{k,q_1}}\lesssim \varepsilon^{\frac{2+6\gamma}{3+3\gamma}}
\bigg(\delta+\frac{\varepsilon}{(1+s)^{1+\gamma}}\bigg)^{\frac{1-3\gamma}{3+3\gamma}}.
\end{eqnarray*}
We inject these estimates in \eqref{estimX} and from 
the same computations as for \eqref{estimB1}
we find that if 
$\min\big(3(\varepsilon_1-\gamma)/5,1/3-\gamma
-\nu\big)\geq \gamma$,
\begin{equation}\label{estimB2}
\begin{aligned}
\big\|\int_1^{t-1}\frac{1}{s}e^{i(t-s)H}\sum_{b\lesssim c\sim a}&
\mathcal{B}^{a,b,c,X}_{2}
[\Theta(s)\Psi^\pm,\Theta(s)\Psi^\pm]ds\big\|_{W^{k,p}}\\
&\lesssim \int_1^{t-1}\frac{C^2\varepsilon^2}
{(t-s)^{1+\gamma}s^{1+\frac{3(\varepsilon_1-\gamma)}{5}}}+\frac{C^2\varepsilon^{1+\frac{2+6\gamma}{3+3\gamma}}}{(t-s)^{1+\gamma}}
\bigg(\delta+\frac{\varepsilon}{(t-s)^{1+\gamma}}
\bigg)^{\frac{1-3\gamma}{3+3\gamma}}ds
\\
&\lesssim \frac{C^2\varepsilon^2}{t^{1+\gamma}}+
C^2\varepsilon\delta.
\end{aligned}
\end{equation}
The two other cases $c\lesssim a\sim b$ and $a\lesssim b\sim c$ can be treated in a similar way.\\
From \eqref{estimB1},\eqref{estimB2}
\begin{equation}\label{decayresX}
\big\|\sum_{a,b,c}I^{a,b,c,X}\big\|_{W^{k,p}}
\lesssim C^2\varepsilon
\bigg(\delta+\frac{\varepsilon}{(1+t)^{1+\gamma}}\bigg)
\end{equation}
\subparagraph{Conclusion}
From \eqref{decayresT} and \eqref{decayresX}, 
we have
\begin{equation}\label{decayDfinal}
 \big\|\int_0^1e^{i(t-s)H}\mathcal{D}(\Psi)ds\big\|
 _{W^{k,p}}\lesssim C^2\varepsilon\bigg(\delta+
 \frac{\varepsilon}{(1+t)^{1+\gamma}}\bigg).
\end{equation}
Higher order (cubic and quartic) terms are easier to control, we refer to \cite{AudHasp2} paragraph $5.2$,
we conclude
\begin{equation}\label{decaytot}
 \bigg\|e^{itH}\Psi_0\|+\int_0^te^{i(t-s)H}\mathcal{N}
 (\Psi,\MP u)ds\bigg\|_{W^{k,p}}\leq 
 \frac{C_1\varepsilon}{(1+t)^{1+\gamma}}
 +C_1C^2\varepsilon\bigg(\delta+\frac{\varepsilon}{(1+t)
 ^{1+\gamma}},\bigg).
\end{equation}
so that choosing $C$ large enough, $\varepsilon$ small 
enough we have as expected 
$$
\|\Psi\|_{W^{k,p}}
\leq \frac{C}{2}(\delta+\varepsilon/(1+t)^{1+\gamma}).
$$
\paragraph{Control of the purely quadratic terms 
in the weighted norm}
We refer to the paragraph $6.2$ in \cite{AudHasp2}, 
which can be applied with the same ``routine'' 
modifications as for the $W^{k,p}$ estimates.

\bibliography{biblio}
 \bibliographystyle{plain}
\end{document}